\documentclass{article}
\usepackage{amsmath,amsthm,amssymb,amsfonts}
\usepackage{hyperref,lineno}
\usepackage{xcolor}
\usepackage{geometry}
\usepackage{natbib}
\usepackage{verbatim}

\DeclareSymbolFont{newfont}{OML}{cmm}{m}{it}

\hypersetup{
	colorlinks,
	linkcolor={red!50!black},
	citecolor={blue!50!black},
	urlcolor={blue!80!black}
}

\newtheorem{assumption}{Assumption}
\newtheorem{proposition}{Proposition}
\newtheorem{lemma}{Lemma}[section]

\newtheorem{corollary}{Corollary}
\newtheorem{theorem}{Theorem}

\newtheorem{definition}{Definition}

\newcommand{\indep}{\perp\!\!\!\perp}

\newcounter{subsubsubsection}[subsubsection]
\renewcommand{\thesubsubsubsection}
{\thesubsubsection.\arabic{subsubsubsection}}

\newcommand{\subsubsubsection}[1]{%
	\refstepcounter{subsubsubsection}%
	\par\medskip
	\noindent\textbf{\thesubsubsubsection\quad #1}%
	\par\smallskip
}

\title{Coupling and Maximal Inequalities for Graph-Dependent Empirical Processes\footnote{We thank Michael Leung and Yuya Sasaki for useful comments. All potential errors are our own.}}

\author{Mengsi Gao\thanks{USC, Dept. of Economics. email: mengsi.gao@usc.edu}~~and Demian Pouzo\thanks{UC Berkeley, Dept. of Economics. email: dpouzo@berkeley.edu} }

\date{\today}

\begin{document}

	\maketitle
	\thispagestyle{empty}
	
	\begin{abstract}
    We develop maximal inequalities for empirical processes indexed by graph-dependent observations. Our bounds separate the complexity of the indexing class from two features specific to graph dependence: the geometry of the underlying graph and the cost of coupling graph-separated blocks to independent copies. The coupling construction combines a novel graph-adapted dependence coefficient with a coloring of a block partition. As an application, we derive Glivenko--Cantelli results and characterize the associated effective sample size. A central implication is that graph-dependent empirical processes need not exhibit a generic root-$n$ rate: convergence is jointly determined by function-class complexity, graph geometry, and the decay of dependence with graph distance. We specialize the results to graphs with polynomial and exponential growth and to directed dyadic graphs. Finally, we apply the results to network autoregressive models, nonlinear local-propagation models, and treatment-interference settings.
    
	\end{abstract}

	\setcounter{page}{1}

\section{Introduction}

Maximal inequalities are a cornerstone of empirical process theory. They underpin Glivenko--Cantelli and Donsker results (see, e.g., \cite{VdV-W1996}), strong approximations (see, e.g., \cite{DudleyPhilipp1983}), and stochastic equicontinuity, and therefore play a central role in the asymptotic analysis of estimators such as M- and GMM-estimators (see, e.g., \cite{NeweyMcFadden1994}). While this theory is well developed for IID data, extensions to dependent data structures remain more limited. Results are available for time series under appropriate mixing conditions; see, e.g., \cite{ChenShen1998,DMR1995,Pouzo2026,yu1994rates}. For graph-dependent data, however, a comparable general empirical-process theory is largely absent. This gap matters because graph dependence arises naturally in social interactions, peer effects, spillovers, worker--firm mobility, trade, financial and production networks, dyadic data, and modern data-science applications.

The main objective of this paper is to provide tools for developing such a theory. We construct couplings for graph-dependent empirical processes and use them to establish a maximal inequality for processes indexed by a possibly infinite class of functions. Specifically, for a finite graph \(\mathfrak G_n=(N_n,V_n)\), with \(\lvert N_n\rvert=n\), and a class \(\mathcal F\) of measurable functions, we derive a bound of the form
\begin{align}
\left\|
\sup_{f\in\mathcal F}
\frac{1}{n}
\sum_{i\in N_n}
\left\{
f(Z_i)-\mathbb E[f(Z_i)]
\right\}
\right\|_{L^1(P)}
\leq
\operatorname{rate}_n
\times
\mathsf C(\mathcal F).
\label{eqn:intro.maximal}
\end{align}

The complexity term \(\mathsf C(\mathcal F)\) is based on Talagrand's generic chaining functionals under semimetrics induced by the empirical process. This separates the size of the indexing class from the graph-dependent stochastic component, captured by $\operatorname{rate}_{n}$, and permits the use of existing bounds based on entropy, bracketing, VC-type, and smoothness conditions; see, e.g., \cite{talagrand2005,talagrand2014,VdV-W1996,VanHandel2018,VanHandel2018b}.

Under IID sampling, \(\operatorname{rate}_n\) is of order \(n^{-1/2}\). For graph-dependent data, there is no analogous universal rate: it depends on graph geometry and on the decay of dependence. To explain how these features enter our bound, it is useful to begin with the standard time-series argument.

For time series, one partitions the observations into consecutive blocks and divides them into interlaced subcollections, such as odd and even blocks. Within each subcollection, the original blocks are replaced by independent copies, allowing exponential inequalities and chaining arguments to be applied. The approximation error is controlled by mixing coefficients such as the $\beta$-mixing coefficient (see \cite{DMR1995}) or the weaker $\tau$-mixing coefficient (\cite{dedecker2002maximal,dedecker2004coupling,dedecker2005new,doukhan2008weakly,Pouzo2026}).


This time-series construction relies on the temporal ordering, which supplies a prespecified notion of past and future. Networks have no comparable ordering: graph distance identifies well-separated blocks but does not determine the predecessor sigma-field relative to which an independent copy should be constructed. We address this problem by combining block partitions with color classes and introducing a, to our knowledge, novel, graph-adapted notion of \(\tau\)-mixing. Blocks in the same color class are ``well-separated'', while an ordering within each class defines their predecessors. Our coupling theorem then replaces the original blocks by copies that preserve their marginal distributions and are mutually independent within each color class. Moreover, the expected discrepancy between each block and its copy equals the corresponding $\tau$-coefficient and is therefore minimal among all marginal-preserving copies that are independent of the block's predecessors.

This optimal-coupling characterization also makes the resulting dependence condition practical to verify:  any admissible coupling provides an upper bound. Moreover, color classes and their predecessor sets supply the structure needed to transfer model-specific coupling arguments from time series to networks. In particular, one may replace the primitive innovations or initial states through which predecessor blocks affect the current block and then control how the effect of this replacement decays with graph distance.

This construction determines the graph-dependent component \(\operatorname{rate}_n\) in \eqref{eqn:intro.maximal}. It reflects three features: the coloring cost required to separate the blocks; the within-block geometry and dependence, captured by the distribution of pairwise graph distances and the accumulation of local dependence; and the coupling cost across separated blocks.

We use the maximal inequality to derive, to our knowledge, novel
Glivenko--Cantelli results and to characterize the associated effective
sample size. These results show that graph-dependent empirical processes
need not exhibit a universal root-\(n\) rate: the effective sample size
is determined jointly by graph growth, local dependence, and the decay
of the coupling error. We specialize our bounds to graphs with
polynomial or exponential growth, directed dyadic graphs, and several
other settings. We also verify the coupling conditions for network
autoregressive models, nonlinear local-propagation models, and models
of treatment interference on networks.

\paragraph{Related literature}

Our paper relates first to the literature on spatial and network-dependent asymptotics. Classical results for spatial random fields typically index observations by subsets of a fixed-dimensional Euclidean space; see \cite{guyon1995,jenishprucha2009} for reviews and representative results. This framework does not encompass general graph-dependent data, whose shortest-path geometry need not embed isometrically into a fixed-dimensional Euclidean space and may exhibit very different neighborhood growth.

To bridge this gap, a growing literature develops pointwise laws of large numbers, central limit theorems, and bootstrap procedures for graph- or network-dependent data. \cite{Kuersteiner2019} establishes laws of large numbers and stable central limit theorems under a conditional spatial mixingale condition based on a model-dependent random distance. \cite{ChandrasekharJacksonMcCormickThiyageswaran2024} provide covariance-based sufficient conditions for central limit theorems for general dependent triangular arrays, while \cite{LeungMoon2026} obtain normal approximations in generalized random-geometric graphs under stabilization. \cite{KOJEVNIKOV2021882} adapt the \(\psi\)-dependence framework of \cite{DOUKHAN1999} to networks, controlling covariances between nonlinear functions of graph-separated collections of observations, and \cite{kojevnikov2021} establishes bootstrap validity.

Our objective differs from these pointwise results: we control the empirical process uniformly over a possibly infinite class \(\mathcal F\). This requires separate control of the complexity of \(\mathcal F\), which pointwise covariance bounds do not provide. Methodologically, we control a \(\tau\)-coefficient rather than conditional covariances. This yields a coupling result that permits the use of exponential inequalities and generic chaining while separating function-class complexity, within-block geometry, coloring cost, and dependence across separated blocks.

The literature on uniform results under spatial or network dependence is more limited. Relative to this literature, our main contribution is a maximal inequality for graph-dependent empirical processes, thereby providing a general framework for establishing uniform convergence
for a broad range of function classes and network structures.

\cite{jenishprucha2009} derive a generic uniform law of large numbers
and sufficient conditions for stochastic equicontinuity under
\(\alpha\)- and \(\phi\)-mixing for observations indexed by irregular
subsets of a fixed-dimensional Euclidean space. Our results extend this
type of uniform analysis to general graph-indexed observations, for
which shortest-path geometry need not admit a fixed-dimensional
Euclidean representation. For graph-dependent data, \cite{sasaki2026gmmmestimationnetwork} establishes a uniform law for finite-dimensional parameter-indexed classes under conditional \(\psi\)-dependence by combining a pointwise law with compactness, a high-level equicontinuity condition, and a finite-net approximation. Its condition excludes classes whose modulus of continuity depends on realized data, including quadratic-loss and other unbounded criterion functions. Our maximal inequality can accommodate such classes through direct empirical-process control, as illustrated in Section~\ref{sec:examples}.

Finally, \cite{CaoLeung2025} establish stochastic equicontinuity for
double/debiased machine learning under metric-space dependence. Their
analysis combines \(\beta\)-mixing with neighborhood stability of the
estimated nuisance function to avoid cross-fitting and is tailored to
that particular estimation problem. By contrast, our maximal inequality
is not specific to double/debiased machine learning or to
neighborhood-stable nuisance estimators. It applies to a broader range
of function classes and estimation problems, uses the weaker
Lipschitz-based notion of \(\tau\)-dependence, and explicitly quantifies
how graph geometry affects uniform convergence.

\section{Setup}
\label{sec:setup}

\paragraph{Graph-dependent data} Let \(\mathfrak{G}_n=(N_n,V_n)\) be a finite graph with vertex set \(N_n \) and edge set \(V_n\) such that $|N_{n}| = n$ for $n \in \mathbb{N}$. Let $(Z_i)_{i\in N_n}$ be a collection of random elements each taking values in a Polish space \((\mathsf{Z}, \mathcal Z)\). The data $(Z_i)_{i\in N_n}$ is drawn from $\mathbb P$, a probability measure on   $\mathsf{Z}^{\infty}$. We use $\mathbb E$ to denote the expectation and $P_{Z}$ to denote the marginal distribution of $Z$.

%

%
%
%
%
%


For \(i,j\in N_n\), define the graph distance as the shortest path between them: 
\[
d_n(i,j) : =
\inf\{ k\ge 0:\ \exists (i_0,\dots,i_k),\ i_0=i,\ i_k=j,\ (i_{\ell-1},i_\ell)\in V_n\}.
\]
If no path exists, set \(d_n(i,j)=+\infty\). In addition, by convention, we set $d_{n}(i,i) = 0$. For any $A,B \subseteq N_n$, define the distance between them by $d_n(A,B):=\inf_{i\in A,j\in B} d_n(i,j)$. 

We assume that for any disjoint subsets \(A,B\subseteq N_n\), if $d_n(A,B) = + \infty$ then   $\sigma(Z_i:i\in A) \ \text{is independent of}\ \sigma(Z_j:j\in B)$.

\paragraph{Function class and empirical process} The empirical process is defined as 
\begin{align*}
	f \mapsto  \mathbb G_{n} (f) : = n^{-1/2} \sum_{i \in N_{n}} \{ f(Z_{i})  - \mathbb E[f(Z_{i})] \}
\end{align*}
over a class $\mathcal F$ of real-valued measurable functions on $\mathsf Z$ such that $\mathcal F \ni 0$.\footnote{The requirement \(0\in\mathcal F\) is a normalization used to anchor the chaining argument; is typically without loss and more generally, the results can be stated for \(\{\mathbb G_n(f)-\mathbb G_n(f_0):f\in\mathcal F\}\), for any fixed \(f_0\in\mathcal F\).}


\subsection{\(\tau\)-mixing on graph-dependent data}
\label{sec:mixing}

We now introduce a notion of $\tau$-mixing for graph-dependent data.
The construction requires two ingredients. The first is a measure of
dependence between a random element and a sigma-field. The second is a
geometric device that identifies collections of blocks that are mutually
well separated in the graph. We use a proper coloring of a block
partition for the latter purpose.

\paragraph{$\tau$-mixing coefficient}
We first recall the definition of the $\tau$-mixing coefficient on a
generic metric space given by \cite{dedecker2006inequalities}. Let
$(\mathsf X,\mathcal X)$ be a Polish space equipped with a
pseudometric $d_{\mathsf X}$, let $X$ be an
$\mathsf X$-valued random element, and let $\mathcal M$ be a
sub-sigma-field. Suppose that, for some $x_0\in\mathsf X$, $\mathbb E\left[
	d_{\mathsf X}(X,x_0) 	\right] 	< 	\infty$.  The $\tau$-coefficient between $\mathcal M$ and $X$ is\footnote{$\operatorname{Lip}_1(\mathsf X,d_{\mathsf X})$ is the class
		of measurable functions $g:\mathsf X\to\mathbb R$ satisfying $ |g(x)-g(x')|
		\le
		d_{\mathsf X}(x,x')$ for all $x,x'\in\mathsf X$. }
\begin{align}
	\tau_{d_{\mathsf X}}(\mathcal M,X)
	:=
	\left\|
	\sup_{g\in\operatorname{Lip}_1(\mathsf X,d_{\mathsf X})}
	\left|
	\mathbb E[g(X)\mid\mathcal M]
	-
	\mathbb E[g(X)]
	\right|
	\right\|_{L^1(\mathbb P)}.
	\label{eq:tau-generic}
\end{align}


This notion is weaker than measuring discrepancy
against all bounded measurable functions, as in the usual
$\beta$-mixing coefficient. 
The weaker Lipschitz requirement is useful
for processes for which total-variation-based dependence does not decay,
but Wasserstein-type dependence does decay with separation; for instance, for processes with discrete outcomes, the $\beta$-mixing coefficient remains bounded away from zero, whereas the $\tau$-mixing coefficient can be made to be arbitrarily close to zero provided $\mathcal{M}$ and $X$ are ``sufficiently separated'' (see \cite{dedecker2002maximal,dedecker2004coupling,dedecker2005new,dedecker2006inequalities} for formal results and discussion in the time series context).

\paragraph{Block partitions and graph colorings} In this paper, we apply the notion of $\tau$-mixing to blocks of graph-dependent random variables of the form $Z_{B} := (Z_{i_1},\dots,Z_{i_m}) \in \mathsf{Z}^{m}$ for some blocks  \(B=\{i_1,\dots,i_m\}\subseteq N_n\). Unlike time
series, where there is a clear direction in terms of past and future, graphs do not supply a canonical temporal ordering. To tackle this issue, we employ the notion of coloring classes.\footnote{Definition \ref{def:color} is a small refinement of the standard definition of proper coloring and chromatic number; e.g. see \cite{bondy2008graph}.}


\begin{definition}
	\label{def:color}
	For any $M \in \mathbb{N}$, let $\mathcal P_{M}=\{B_1,\ldots,B_M\}$ denote a $M$-partition of $N_n$. 
	For any $r \in \mathbb{N}$, a proper $r$-coloring associated to a $M$-partition $\mathcal P_{M}$ is a function $c: [M] \to \mathbb{N}$ such that
		\begin{align}
		\forall m,m' \colon c(m)=c(m')~and~m\neq m'
		\Longrightarrow
		d_n(B_m,B_{m'})
		>
		r.
		\label{eq:proper-coloring}
	\end{align}
The $r$-chromactic number is the smallest possible number of distinct colors, i.e.,
\begin{align}	\label{eq:r-chromatic-number}
 K_{r} : = 	K_{r}(\mathcal P_M) : = \min_{c} |c([M])|
\end{align}
where the min is taken over all possible $r$-colorings of $\mathcal P_{M}$.

The sets $C_{k} : = c^{-1}(k) = \{ m \in [M] \colon c(m) = k \}$ for all $k \in [K_{r}]$ are the color classes --- the collection of these sets is denoted as $\mathcal C_{r,M} : = \mathcal C_{r}[\mathcal P_{M}]$.
%
%
%
\end{definition}

The color class $\mathcal{C}_{r,M}$  collects blocks that are pairwise separated by graph
distance strictly greater than $r$. Elements of $C_{k}$ are in $\mathbb{N}$ and thus completely ordered:\footnote{The labeling of the blocks induces the within-color order used in \eqref{eq:ordered-color-class}; it does not represent a causal or geometric direction in the graph. Although other orderings could produce smaller coupling costs, we fix this ordering throughout.} 
\begin{align}
C_k
	=
	\left\{
	m_{k,1},
	\ldots,
	m_{k,L_k}
	\right\},~with~
	m_{k,1}
	<
	\cdots
	<
	m_{k,L_k}.
	\label{eq:ordered-color-class}
\end{align}

For every $k\in[K_{r}]$ and $\ell\in[L_k]$, define the within-class
predecessor set of the $\ell$-th block by
\begin{align}
	\operatorname{Pred}_{\ell}( C_k)
	:=
	\bigcup_{1\leq u<\ell}
	B_{m_{k,u}},
	\label{eq:within-class-predecessor-set}
\end{align}
with the convention that $\operatorname{Pred}_{1}( C_k)
	=
	\varnothing$. 
Thus,
$\operatorname{Pred}_{\ell}( C_k)$ is the union of the blocks
in $ C_k$ that precede $B_{m_{k,\ell}}$ within coloring class in
\eqref{eq:ordered-color-class} and are ``well-separated'' in the sense that, if $\ell\ge2$, $d_n\left(
	\operatorname{Pred}_{\ell}( C_k),
	B_{m_{k,\ell}}
	\right)
	>
	r$. 

\paragraph{Coloring-adapted $\tau$-coefficients}
Let $\mathcal B\subseteq \operatorname{span}\mathcal F$. For every $m\in\mathbb N$,
equip $\mathsf Z^m$ with the pseudometric
\begin{align}
	d_{\mathcal B,m}(z,z')
	:=
	\sum_{s=1}^{m}
	\sup_{f\in\mathcal B}
	\left|
	f(z_s)-f(z'_s)
	\right|,
	\qquad
	z,z'\in\mathsf Z^m.
	\label{eq:block-pseudometric}
\end{align}

\begin{definition}[Coloring-adapted average $\tau$-coefficient]
	Given a $\mathcal B \subseteq \operatorname{span}\mathcal F$ and a partition-coloring pair $\left\{
		\mathcal P_{M},
		\mathcal C_{r}[\mathcal P_{M}]
		\right\}$, 
	define the coloring-adapted average $\tau$-coefficient by
	\begin{align}
	\bar{\tau}_{\mathcal B}
	\left(
	\mathcal P_{M},
	\mathcal C_{r}[\mathcal P_{M}]
	\right)
	:=
	\frac{1}{n}
	\sum_{k=1}^{K_{r}}
	\sum_{\ell=1}^{L_k}
	\tau_{d_{\mathcal B,|B_{m_{k,\ell}}|}}
	\left(
	\sigma\left(
	Z_j:
	j\in\operatorname{Pred}_{\ell}( C_k)
	\right),
	Z_{B_{m_{k,\ell}}}
	\right). 
	\label{eq:average-coloring-adapted-tau}
\end{align}
\end{definition}

The coefficient inside the sums in
\eqref{eq:average-coloring-adapted-tau} is the block-level
$\tau$-dependence between $Z_{B_{m_{k,\ell}}}$ and the sigma-field
generated by its within-class predecessors. 

The coefficient $\bar{\tau}_{\mathcal B}$ is therefore an average of
the actual coupling costs generated by the given partition and
coloring. This average is the natural aggregation for the goal of the present paper that is to provide a coupling
argument used for empirical processes --- this claim becomes apparent in the proof of Lemma \ref{lem:maximal.general2}, where the
block-specific coupling errors are summed over the partition.

A perhaps more widely applicable measure, not tailored to empirical processes, is one that does not rely on the average and provides a bound uniform over color classes:
\begin{align}
	\tau^{\mathrm{max}}_{\mathcal B}
	\left(
	\mathcal P_{M},
	\mathcal C_{r}[\mathcal P_{M}]
	\right)
	:=
	\max_{k\in[K_{r}]}
	\max_{\ell\in[L_k]}
	\frac{1}{|B_{m_{k,\ell}}|}
	\tau_{d_{\mathcal B,|B_{m_{k,\ell}}|}}
	\left(
	\sigma\left(
	Z_j:
	j\in\operatorname{Pred}_{\ell}( C_k)
	\right),
	Z_{B_{m_{k,\ell}}}
	\right).
	\label{eq:max-coloring-adapted-tau}
\end{align}

Finally, yet another alternative measure is given by a version that is uniform over every $r$-coloring with maximal size block given by $q$,
\begin{align}
	\tau_{\mathcal B}(r,q)
	:=
	\sup_{\substack{
			A,B\subseteq N_n:
			A\neq\varnothing,\,
			B\neq\varnothing,\,
			d_n(A,B)\ge r,\,
			|B|\le q
	}}
	\frac{1}{|B|}
	\tau_{d_{\mathcal B,|B|}}
	\left(
	\sigma(Z_A),
	Z_B
	\right).
	\label{eq:global-graph-tau}
\end{align}

For every $q\ge\max_{m\in[M]}|B_m|$, the preceding coefficients
satisfy
\begin{align}
	\bar{\tau}_{\mathcal B}
	\left(
	\mathcal P_{M},
	\mathcal C_{r}[\mathcal P_{M}]
	\right)
	\le
	\tau^{\mathrm{max}}_{\mathcal B}
	\left(
	\mathcal P_{M},
	\mathcal C_{r}[\mathcal P_{M}]
	\right)
	\le
	\tau_{\mathcal B}(r,q).
	\label{eq:tau-coefficient-comparison}
\end{align}

The first inequality follows because
$\bar{\tau}_{\mathcal B}$ is a weighted average of the normalized
block-level coefficients appearing in
$\tau^{\mathrm{max}}_{\mathcal B}$. The second follows because the
predecessor set
$\operatorname{Pred}_{\ell}(C_k)$ and the block
$B_{m_{k,\ell}}$ are separated by graph distance strictly greater
than $r$.\footnote{The term corresponding to $\ell=1$ is zero because its
predecessor sigma-field is trivial.}

The three coefficients differ in the extent to which they are tailored to the coupling construction. The coefficient $\bar{\tau}_{\mathcal B}$ is the average of the block-specific coupling costs induced by the chosen partition, coloring, and within-class ordering; it is therefore the sharpest quantity for our maximal inequality. The coefficient $\tau^{\max}_{\mathcal B}$ replaces this average by a uniform bound over the blocks of the same construction. Finally, $\tau_{\mathcal B}(r,q)$ is uniform over all graph-separated pairs of sets with second component of cardinality at most $q$. It is consequently the most convenient primitive condition, but may be conservative because it ignores the particular predecessor sets generated by the coloring.


\paragraph{Coupling}  By \cite{dedecker2006inequalities}, the $\tau$-dependence coefficient admits an optimal-transport interpretation through minimal couplings, which is useful for the implementation of our results. Formally,
\[
\tau_{d_{\mathsf{X}}}(\mathcal M,X)
=
\inf \Big\{   \mathbb E [ d_{\mathsf{X}}(X',X) ] \colon 
X' \stackrel{d}{=} X,\;
X' \indep \mathcal M
\Big\}.
\]
That is, the $\tau$-coefficient in (\ref{eq:tau-generic}) is the minimal transportation cost needed to replace $X$ by an independent copy $X'$ while preserving its marginal distribution. So, letting $$\mathcal M^{m_{k,\ell} - 1}_{C_{k}} : = 	\sigma\left(
Z_j:
j\in\operatorname{Pred}_{\ell}( C_k)
\right),$$ 
{\small{\begin{align}\label{eqn:tau.coupling.main.1}
		\bar{\tau}_{\mathcal B}
	\left(
	\mathcal P_{M},
	\mathcal C_{r}[\mathcal P_{M}]
	\right)
	=
	\frac{1}{n}
	\sum_{k=1}^{K_{r}}
	\sum_{\ell=1}^{L_k} \inf \Big\{   \mathbb E [ d_{\mathcal B,|B_{m_{k,\ell}}| } (X',	Z_{B_{m_{k,\ell}}} ) ] \colon 
	X' \stackrel{d}{=} 	Z_{B_{m_{k,\ell}}} ,\;
	X' \indep 	\mathcal M^{m_{k,\ell}-1}_{C_{k}} 
	\Big\}.
\end{align}}}
A similar result holds for $\tau^{\mathrm{max}}_{\mathcal B}$ and also 
\begin{align}\label{eqn:tau.coupling.main.2}
	\tau_{\mathcal{B}}(r,q) =	\sup_{\substack{A,B\subseteq N_n\\ d_n(A,B)\ge r,\; |B| \le q}} \frac{1}{|B|} 	\inf \Big\{
 \mathbb E\big[	d_{\mathcal{B},|B|} (Z_B,Z'_B)  \big] 
	\colon 
	Z'_B \stackrel{d}{=} Z_B,\;
	Z'_{B} \indep \sigma(Z_A)
	\Big\}.
\end{align}

\section{Coupling Result}
\label{sec:coupling}

This section presents a coupling result that replaces $(Z_i)_{i\in N_n}$ by a
coupled process $(Z_i^\ast)_{i\in N_n}$ whose blocks are mutually independent
within each color class, while providing explicit and uniform control of the
resulting coupling error. Henceforth, let $\Pr$ denote the joint law of the original and coupled processes, and let $\mathbb E_{\Pr}$ denote the expectation under
this law.

\begin{theorem}\label{thm:coupling}
	Let $\mathcal{B} \subseteq \operatorname{span} \mathcal{F}$ and $\mathcal P_{M} : = (B_m)_{m=1}^M$ be an $M$-partition of $N_n$, and let $\mathcal C_{r} : = (C_{1},...,C_{K_r})$  be a proper $r$-coloring of $(B_m)_{m=1}^M$.	Then there exists an extension of the probability space and, on that extension,
	a collection $(Z_i^\ast)_{i\in N_n}$ with the following properties:
	
	\begin{enumerate}
		\item For every $m\in [M] $, $Z_{B_m}^\ast \stackrel{d}{=} Z_{B_m}$ with  $Z_{B_m}^\ast := (Z_i^\ast)_{i\in B_m}$ and  $Z_{B_m} := (Z_i)_{i\in B_m}$.
		
		\item For every $k\in [K_r]$, the family $\bigl(Z_{B_m}^\ast\bigr)_{m\in C_k}$	is mutually independent.
		
		\item For every $k\in [K_r]$ and every $m\in C_k$,
		\begin{align}
			\mathbb E_{\Pr}\!\left[
			d_{\mathcal{B},|B_m|}\!\left(Z_{B_m}^\ast,Z_{B_m}\right)
			\right]	 =		&	\inf \Big\{
			\mathbb E\big[	d_{\mathcal{B},|B_m|}(Z_{B_{m}},X')\big]
			:\;
			X' \stackrel{d}{=} Z_{B_{m}},\;
			X' \indep	\mathcal{M}^{m-1}_{C_{k}} \label{eqn:min.coupling.0} 
			\Big\} \\
			= & 	\tau_{d_{\mathcal B,|B_{m}|}}
			\left(
			\mathcal{M}^{m-1}_{C_{k}}, 
			Z_{B_{m}}
			\right), \label{eqn:min.coupling}
		\end{align}
				\begin{align}\label{eqn:tau.bound.max}
			\mathbb E_{\Pr}\!\left[
			d_{\mathcal{B},|B_m|}\!\left(Z_{B_m}^\ast,Z_{B_m}\right)
			\right]
			\le |B_{m}| \tau_{\mathcal{B}}(r,|B_{m}|),
		\end{align}
		and 
		\begin{align}\label{eqn:average.min.coupling}
		n^{-1} \sum_{k=1}^{K_{r}} \sum_{m \in C_{k}} \mathbb E_{\Pr} \left[ d_{ \mathcal{B},|B_{m}|}(Z^{\ast}_{B_{m}},Z_{B_{m}}) \right] =  \bar{\tau}_{ \mathcal B}(\mathcal P_{M}, \mathcal{C}_{r}). 
		\end{align}
	\end{enumerate}
\end{theorem}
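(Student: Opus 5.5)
We construct the copies color class by color class, using the Dedecker--Prieur coupling lemma \cite{dedecker2006inequalities}. Fix $k\in[K_r]$ and write $C_k=\{m_{k,1}<\dots<m_{k,L_k}\}$ as in \eqref{eq:ordered-color-class}. We extend the probability space by a sequence $(U_{k,\ell})_{k,\ell}$ of i.i.d.\ uniform $[0,1]$ variables independent of $(Z_i)_{i\in N_n}$. The key tool is the following statement, which holds because $\mathsf Z^{m}$ is Polish and $d_{\mathcal B,m}$ is a measurable pseudometric. Let $\mathcal M$ be a sigma-field, let $X$ be $\mathsf Z^m$-valued, and let $U$ be uniform and independent of $\sigma(\mathcal M, X)$. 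Then there is a measurable function $F$ such that $X^\ast:=F(\cdot,X,U)$ satisfies three properties: it is $\sigma(\mathcal M,X,U)$-measurable, it satisfies $X^\ast\stackrel d= X$ with $X^\ast\indep\mathcal M$, and $\mathbb E[d_{\mathcal B,m}(X,X^\ast)]=\tau_{d_{\mathcal B,m}}(\mathcal M,X)$.

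\textbf{Construction.} For $\ell=1$ the predecessor sigma-field is trivial, so we set $Z^\ast_{B_{m_{k,1}}}:=Z_{B_{m_{k,1}}}$. For $\ell\ge2$ we apply the lemma with
\[
\mathcal M=\mathcal M_k^{(\ell)}:=\sigma\bigl(Z_j: j\in \operatorname{Pred}_\ell(C_k)\bigr)\vee\sigma(U_{k,1},\dots,U_{k,\ell-1}),
\qquad X=Z_{B_{m_{k,\ell}}},\qquad U=U_{k,\ell}.
\]
Enlarging $\mathcal M$ by the independent uniforms leaves the $\tau$-coefficient unchanged, because the $U$'s are independent of $(Z,\,\mathcal M^{m-1}_{C_k})$ and the conditional expectation therefore does not change. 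Concretely, $\mathbb E[g(X)\mid \mathcal M\vee\sigma(U)]=\mathbb E[g(X)\mid\mathcal M]$ whenever $U\indep \sigma(X)\vee\mathcal M$. By induction, $Z^\ast_{B_{m_{k,u}}}$ is $\mathcal M_k^{(\ell)}$-measurable for every $u<\ell$.

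\textbf{Properties 1 and 2.} Property 1 is immediate from the lemma. For Property 2, $Z^\ast_{B_{m_{k,\ell}}}$ is independent of $\mathcal M_k^{(\ell)}$, which contains $\sigma(Z^\ast_{B_{m_{k,u}}},u<\ell)$. Peeling off the last block repeatedly then factorizes the joint law, which gives mutual independence within $C_k$. Different color classes use different uniforms; Property 2 only concerns blocks within one class, so no cross-class requirement is needed.

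\textbf{Property 3.} By construction, the expected distance between each block and its copy equals $\tau(\mathcal M^{m-1}_{C_k},Z_{B_m})$, which is \eqref{eqn:min.coupling}. The variational identity \eqref{eqn:min.coupling.0} is the Dedecker--Prieur characterization recalled in the Coupling paragraph. Its infimum is attained by our $X^\ast$. Any other admissible $X'$ gives cost at least $\tau$, by the duality $\mathbb E|g(X)-g(X')|\ge \mathbb E|\mathbb E[g(X)|\mathcal M]-\mathbb E g(X)|$ for $1$-Lipschitz $g$; this inequality must be handled with a measurable selection of the supremum, and we invoke the cited result for that. For \eqref{eqn:tau.bound.max}, when $\ell\ge2$ the set $\operatorname{Pred}_\ell(C_k)$ is nonempty and satisfies $d_n(\operatorname{Pred}_\ell(C_k),B_m)>r$. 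Hence the definition \eqref{eq:global-graph-tau} with $A=\operatorname{Pred}_\ell(C_k)$, $B=B_m$ and $q=|B_m|$ gives the bound. When $\ell=1$ the left side is $0$. Finally, summing over all $k$ and $m\in C_k$ and dividing by $n$ yields \eqref{eqn:average.min.coupling} by the definition \eqref{eq:average-coloring-adapted-tau}.

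The main obstacle is the existence of an exactly optimal, measurable coupling under the pseudometric $d_{\mathcal B,m}$. This pseudometric is a supremum over a possibly uncountable $\mathcal B$, so measurability and lower semicontinuity are not automatic. I would first handle this by assuming, or reducing to, a countable dense subclass of $\mathcal B$, so that $d_{\mathcal B,m}$ is a measurable pseudometric. I would then apply the conditional Kantorovich duality of \cite{dedecker2006inequalities} together with a measurable-selection argument for the optimal conditional transport plan.
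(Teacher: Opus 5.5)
Your proposal is correct and follows the paper's proof essentially step for step. It uses the same Dedecker--Prieur coupling lemma within each color class, with the predecessor $\sigma$-field enlarged by the earlier uniforms $U_{k,1},\dots,U_{k,\ell-1}$, and it relies on the same facts: $\tau$ is invariant under this enlargement, earlier copies are measurable with respect to the enlarged $\sigma$-field (which gives within-class independence), and summation plus properness of the coloring give \eqref{eqn:average.min.coupling} and \eqref{eqn:tau.bound.max}. Your departures are minor. Setting $Z^\ast_{B_{m_{k,1}}}=Z_{B_{m_{k,1}}}$ rather than applying the lemma with a trivial $\sigma$-field changes nothing, and the paper simply treats $(\mathsf Z^m,d_{\mathcal B,m})$ as admissible for the cited lemma without addressing the measurability and lower-semicontinuity issue you flag.
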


\begin{proof}
	See Appendix \ref{sec:coupling.proof}.
\end{proof}

Theorem \ref{thm:coupling} provides a blockwise decoupling device for graph-dependent data. Each coupled block \(Z_{B_m}^{\ast}\) preserves the distribution of \(Z_{B_m}\), while the coupled blocks within each color class are mutually independent. Moreover, the expected blockwise discrepancy is the corresponding \(\tau\)-coefficient, and its average across the partition equals
\(\bar\tau_{\mathcal B}(\mathcal P_M,\mathcal C_r[\mathcal P_M])\), as shown in \eqref{eqn:average.min.coupling}.

This expression in part 3 is the implication we used to obtain maximal inequalities. The LHS presents the relevant notion of distance between the original and coupled process implied by the empirical processes. The RHS is our $\tau$-dependence coefficient defined in \eqref{eq:average-coloring-adapted-tau}. Expression \eqref{eqn:min.coupling} is the basis of \eqref{eqn:average.min.coupling} and is a direct consequence of the minimal coupling representation of the $\tau$-coefficient (see \eqref{eqn:tau.coupling.main.1}-\eqref{eqn:tau.coupling.main.2} above). It implies that, for each color class $C_k$ and each $m\in C_k$, the variable $Z_{B_m}^\ast$ is constructed as a minimizer of this problem with $\mathcal M = \mathcal M^{m-1}_{C_k}$ and $X = Z_{B_m}$.

The minimal-coupling representation also provides a practical route for bounding the coupling error. 
Although the infimum defining the $\tau$-coefficient may be difficult to evaluate directly, any admissible coupling yields an upper bound. Thus, in applications, it is enough to construct a copy of the current block that preserves its marginal distribution, is independent of the relevant predecessor $\sigma$-field, and remains close to the original block. The remaining question is how such couplings can be constructed in concrete models. The standard approach for time series provides the relevant template and, as we explain next, the coloring and predecessor construction allows that approach to be adapted to graph-dependent data.

\paragraph{From time series to graph-dependent data}
The canonical approach to bounding \(\tau\)- and \(\beta\)-mixing coefficients in causal time-series models uses a model-specific coupling. Let \(\mathcal M_t\) denote the sigma-field generated by observations through time \(t\). To control the dependence between \(\mathcal M_t\) and \(Z_{t+h}\), one constructs \(Z_{t+h}'\) satisfying $Z_{t+h}'\overset{d}{=}Z_{t+h}$, and $Z_{t+h}'\perp\!\!\!\perp\mathcal M_t$, so that
\[
\tau_d(\mathcal M_t,Z_{t+h})
\leq
\mathbb E\left[
d\left(Z_{t+h},Z_{t+h}'\right)
\right].
\]
Typically, \(Z_{t+h}'\) is obtained by replacing the primitive innovations or initial state carrying information from the past with independent copies. Bounding \(\tau\) then reduces to controlling how rapidly the effect of this replacement dissipates with \(h\); see, e.g., \cite{dedecker2004coupling,dedecker2002maximal} for examples of this approach.

Graphs lack an ordering that determines the relevant predecessor sigma-field. Our coloring construction supplies this missing structure: blocks within each color class are well separated, while their ordering as elements of \([M]\) defines the predecessor sets in \eqref{eq:within-class-predecessor-set}. For each block \(B_{m_{k,\ell}}\), one may thus construct a copy satisfying $Z_{B_{m_{k,\ell}}}'
\overset{d}{=}
Z_{B_{m_{k,\ell}}}$, and $Z_{B_{m_{k,\ell}}}'
\perp\!\!\!\perp
\sigma\left(
Z_j:
j\in\operatorname{Pred}_{\ell}(C_k)
\right)$, and bound the corresponding \(\tau\)-coefficient by the expected coupling discrepancy.

Thus, model-specific coupling arguments developed for time series extend to graphs once color classes identify well-separated blocks and predecessor sets provide an analogue of the temporal past. Section \ref{sec:examples} illustrates this construction by resampling the primitive shocks through which predecessor blocks affect the current block.\footnote{For comparison, \(\psi\)-dependence, introduced by \cite{DOUKHAN1999} and adapted to networks by \cite{KOJEVNIKOV2021882}, bounds covariances using localized versions \(Z_A'\) and \(Z_B'\) independent of \(Z_A\) and \(Z_B\). This is weaker than our requirement that
\(Z_{B_{m_{k,\ell}}}'\perp\!\!\!\perp\sigma\left(
Z_j:
j\in \operatorname{Pred}_{\ell}(C_k)\right)\). However, as opposed to our coefficient \(\bar{\tau}^{\max}\),
independence from localized predecessors does not provide the sequential coupling needed for our maximal inequality. 
}

\section{Maximal Inequality}
\label{sec:maximal}

In this section we derive a maximal inequality for the empirical process $\mathbb{G}_n(f)
	= \frac{1}{\sqrt{n}}
	\sum_{i\in N_n}
	\Big(
	f(Z_i) - \mathbb{E}[f(Z_i)]
	\Big)$ 
over a  class of measurable functions \(\mathcal F\). 

To state the result, we define the metric over $\mathcal{F}$ and the corresponding complexity measure.

\paragraph{Talagrand's measure of complexity} 	For any $\mathcal B \subseteq \mathcal F$ and $r>0$, a sequence of partitions $T^\infty := (T_l)_{l\in \mathbb{N}_0}$ is called an admissible partition sequence of order $r$ for $\mathcal B$ if:
\begin{enumerate}
	\item $T^\infty$ is increasing (i.e., $T_{l+1} \supseteq T_l$ for all $l$),
	\item $\operatorname{card}(T_0)=1$, $\operatorname{card}(T_l) \leq 2^{2^{l/r}}$ for all $l \geq 1$.
\end{enumerate}
Let $\mathcal T_r(\mathcal B)$ denote the collection of all such admissible partition sequences. For any $f \in \mathcal B$ and $l\in\mathbb{N}_0$, let $T(f,T_l)$ be the unique element of $T_l$ containing $f$, and let
\[
z \mapsto D(f,T_l)(z)
:=
\sup_{f_1,f_2 \in T(f,T_l)} |f_1(z) - f_2(z)|
\]
be the diameter function associated to it.

\begin{definition}[Talagrand complexity measure]\label{def:talagrand}
	Let $r>0$, $p>0$ and let $d$ be a (pseudo) metric over $\mathcal B$. The Talagrand complexity measure for $\mathcal B \subseteq \mathcal F$ is defined as
	\[
	\gamma_{p,r}(\mathcal B,d)
	:=
	\inf_{T^\infty \in \mathcal T_r(\mathcal B)}
	\sup_{f \in \mathcal B}
	\sum_{l=0}^{\infty}
	2^{l/p}\, d\big( D(f,T_l) \big).
	\]
\end{definition}

\paragraph{Graph-induced semi-norms over $\mathcal F$} Dating back at least to \cite{Dudley-1967}, the natural metric for measuring the complexity of a function class is the one dictated by the concentration behavior of the associated empirical process. In the present setting, the empirical process is approximated by the color-wise block independent process constructed in Theorem \ref{thm:coupling} and  the relevant concentration inequality is the Bernstein inequality (presented in Lemma \ref{lem:bern} below). Consequently, the appropriate distances over $\mathcal F$ are those that separately control the two quantities entering that inequality: a variance component of the block independent process and a uniform boundedness component.

For time series, the variance component is typically measured through the standard deviation of partial sums over blocks of consecutive observations. For a block $B = \{ 1,..., L  \}$ of length $L$, this quantity takes the form $\sqrt{
		\operatorname{Var}
		\left( L^{-1/2}
		\sum_{l=1}^{L} g(Z_{l})
		\right)
	}$. 
Under stationarity and suitable mixing conditions, one can use Theorem 1.1 in \cite{rio2017asymptotic} to obtain, for $p\geq 1$,
\begin{align}
	\sqrt{
		\operatorname{Var}
		\left( L^{-1/2}
		\sum_{l=1}^{L} g(Z_{l})
		\right)
	}
	\lesssim 
		\left(
	\int_0^1
	\left[
	\sum_{j=0}^{L}
	1\{u\leq \alpha_{\mathcal F ,B}(j)\}
	\right]^{\frac{p}{p-1}}
	du
	\right)^{\frac{p-1}{2p}} \, \|g\|_{L^{2p}(P)} ,
	\label{eqn:TimeSeries.norm}
\end{align}
where
\begin{align*}
	\alpha_{\mathcal{F}, B}(r) : =  \sup_{f \in \mathcal{F} - \mathcal{F} } \sup_{i,j \in B \colon d_{n}(i,j) \geq r }   \alpha(f(Z_i),f(Z_j)),~\forall  r \in \mathbb N_{0}  
\end{align*}
is the $\alpha$-mixing coefficient as in \cite{rio2017asymptotic} expression 1.8a.\footnote{This formulation of the $\alpha$-coefficient is no larger than the standard one and in some cases is non-trivially smaller. To see this note that the standard one can be taken as $\sup_{f,g} |\operatorname{Cov}(f(Z_{i}),g(Z_{j}))|$ where the supremum is over the class of bounded (by one) measurable functions. On the other hand, in the present formulation of $\alpha$ the supremum is taken over the class given by $BV_{1} \circ (\mathcal F - \mathcal F)$, which is no larger, and in some cases strictly smaller than the former class.} The right-hand side of \eqref{eqn:TimeSeries.norm} therefore induces the relevant metric for the chaining argument.

We follow the same principle in this paper, but the graph structure changes the geometry of the variance term. Blocks are no longer intervals of consecutive observations. Instead, they are elements $B$ of a partition of the node set $N_n$. Thus the object to be controlled is  $\sqrt{
		\operatorname{Var}
		\left( |B|^{-1/2}
		\sum_{i\in B} g(Z_i)
		\right)
	}$,  and, in contrast to the time-series case, there is generally no translation-invariant structure that reduces this expression to a function of the block length alone.

A second difference is that graph distance is no longer one-dimensional. In a time series, the distance between observations $i$ and $j$ is simply $|i-j|$, and the number of observations at any fixed distance from a given point is uniformly bounded by two. In general graphs, distance is given by $d_n$, and, as we shall see in detail in Section \ref{sec:geometry} below,  the number of node pairs inside a block at each distance can vary substantially. Since dependence is typically stronger at shorter graph distances, blocks with many pairs at small distances have larger variance than blocks whose pairs are mostly far apart. The graph-induced norm must therefore account not only for the size of the block, but also for how its pairwise distances are distributed. 

As it will become apparent from the proof of Lemma \ref{lem:VarBlock.upper} below, the appropriate quantity for capturing this feature is the \emph{within-block distance neighbor pair profile}: For any block $B$, the within-block $B$ distance neighbor pair profile is 
\begin{align}
	S_{B}(r) := \{ (i,j) \in B \times B : d_{n}(i,j)=r \},~\forall r  \geq 0,
\end{align}
and $|S_{B}(r)|$ is the count of neighbor pairs at distance $r$.

%
%
%
%

The following result formalizes these observations and provides a natural (pseudo) metric for graph-dependent data.\footnote{Here are throughout, $\lesssim$ denotes ``less or equal than \emph{up to universal constants}''.}	 

\begin{lemma}\label{lem:VarBlock.upper}
For any $g \in \mathcal F - \mathcal F$, any $n \in \mathbb{N}$, any $p \in [1,\infty]$, and any $B \subseteq N_{n}$, let 
	\begin{align}\label{eqn:var.bound.text}
	 \sqrt{	\operatorname{Var} \left( |B|^{-1/2} \sum_{i \in B} g(Z_{i})   \right) } \lesssim    \min \left\{  	\Gamma^{\mathrm{sum}}_{p}(B) ,  	\Gamma_p^{\sharp}(B) |B|^{1/2p}  \right\} \max_{i \in B}  ||g||_{L^{2p}(P_{Z_{i}})}, 
	\end{align}
where 
\begin{align*}
	\Gamma^{\mathrm{sum}}_{p}(B) : = \left\{  
	\begin{array}{ll}
		\sqrt{ \sum_{\rho \in R(B)}  	\frac{|S_{B}(\rho)|}{|B|}  \left( \alpha_{ \mathcal{F} , B }(\rho)  \right)^{1-\frac{1}{p} } }  & if~p \in (1,\infty] \\
		\sqrt{ \sum_{\rho \in R(B)}  	\frac{|S_{B}(\rho)|}{|B|} }  & if~p = 1
	\end{array}
	\right.
\end{align*}
and \begin{align*}
	\Gamma_p^{\sharp}(B)
	:=
	\left\{  
		\begin{array}{ll}
	\sqrt{ \left(
	\int_0^1
	\left[
	\sum_{\rho\in R(B)}
	\frac{|S_B(\rho)|}{|B|}
	1\{\alpha_{\mathcal F,B}(\rho)\geq u\}
	\right]^{\frac{p}{p-1}}
	du
	\right)^{1-\frac{1}{p} } } & if~p \in (1,\infty] \\
	\sqrt{ \sum_{\rho \in R(B)}  	\frac{|S_{B}(\rho)|}{|B|} }  & if~p = 1	
	 	\end{array}
 		 \right.
\end{align*}
with $R(B) : = \{ r \in \{ 0,1, \ldots, n-1\} \cup \{\infty\}    \colon 	S_{B}(r) \ne \emptyset  \}$.\footnote{We set $d_{n}(i,i) = 0$ and $\alpha_{\mathcal{F} , B }(0) = 1$. The case $r=\infty$ captures pairs that are not connected, which by our conditions are independent and thus $\alpha_{\mathcal{F} , B }(\infty) : =  0$.}

Moreover, if $\max_{i \in B} ||dP_{Z_{i}}/dP_0||_{\infty} \leq C$, for some measure $P_0$, then the norm can be taken to be $||.||_{L^{2p}(P_0)}$ and the factor $|B|^{1/2p}$ in  \eqref{eqn:var.bound.text} can be dispensed with.
\end{lemma}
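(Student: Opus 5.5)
Expand the variance as a double sum of covariances, group the pairs by graph distance, and bound each covariance with Rio's covariance inequality for $\alpha$-mixing:
\[
\operatorname{Var}\Big(|B|^{-1/2}\sum_{i\in B} g(Z_i)\Big)=\frac{1}{|B|}\sum_{\rho\in R(B)}\sum_{(i,j)\in S_B(\rho)}\operatorname{Cov}\big(g(Z_i),g(Z_j)\big).
\]
Pairs with $\rho=\infty$ contribute zero, since disconnected nodes are independent by the standing assumption. The diagonal $\rho=0$ contributes at most $\max_i\|g\|^2_{L^2(P_{Z_i})}\le \max_i\|g\|^2_{L^{2p}(P_{Z_i})}$. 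For a pair at distance $\rho$, the covariance involves only the pair $(g(Z_i),g(Z_j))$ with $g\in\mathcal F-\mathcal F$. Its strong mixing coefficient is therefore bounded by $\alpha_{\mathcal F,B}(\rho)$, because $\sigma(g(Z_i))$-measurable indicators are functions of $g(Z_i)$. This is exactly why the restricted coefficient is used.

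\textbf{Bound $\Gamma^{\mathrm{sum}}_p$.} Apply the H\"older-type covariance inequality (Davydov/Rio) with exponents $(2p,2p,p/(p-1))$:
\[
|\operatorname{Cov}(g(Z_i),g(Z_j))|\lesssim \alpha_{\mathcal F,B}(\rho)^{1-1/p}\,\|g\|_{L^{2p}(P_{Z_i})}\|g\|_{L^{2p}(P_{Z_j})}.
\]
For $p=1$ use the trivial Cauchy--Schwarz bound $\|g\|_{L^2}^2$; for $p=\infty$ the exponent is $1$ and the bound is the bounded-variable case. Summing over $S_B(\rho)$, dividing by $|B|$ and taking square roots gives $\Gamma^{\mathrm{sum}}_p(B)\max_i\|g\|_{L^{2p}(P_{Z_i})}$. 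Note that $|S_B(0)|/|B|=1$ and $\alpha(0)=1$ absorb the diagonal.

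\textbf{Bound $\Gamma^\sharp_p$.} Use Rio's sharper inequality $|\operatorname{Cov}(X,Y)|\le 2\int_0^{2\alpha}Q_X(u)Q_Y(u)\,du$, where $Q_X$ is the quantile function of $|X|$. Writing $\int_0^{2\alpha}=\int_0^1 1\{u\le 2\alpha\}$ and summing gives
\[
\operatorname{Var}\lesssim \frac{1}{|B|}\sum_{i,j\in B}\int_0^1 1\{u\le\alpha_{\mathcal F,B}(d_n(i,j))\}\,Q_i(u)Q_j(u)\,du
\]
(rescale $u$ to handle the factor $2$). Bound $Q_i(u)Q_j(u)\le \tfrac12(Q_i^2+Q_j^2)$. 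By symmetry this reduces to
\[
\int_0^1 \Big[\sum_\rho \tfrac{|S_B(\rho)|}{|B|}1\{u\le\alpha(\rho)\}\Big]\,\bar Q^2(u)\,du
\]
for a suitable average. This averaging step is the main obstacle. The weight $\sum_{j:d(i,j)=\rho}$ depends on $i$, so one cannot factor out the row-specific count. I would therefore bound $\sum_i Q_i^2(u)\,w_i(u)\le \max_i Q_i^2(u)\sum_i w_i(u)$ and then control $\max_i Q_i^2\le\sum_i Q_i^2$. Applying H\"older in $u$ with exponents $p/(p-1)$ and $p$ then gives $\big(\int_0^1[\cdot]^{p/(p-1)}du\big)^{1-1/p}\big(\int_0^1\sum_i Q_i^{2p}du\big)^{1/p}$. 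The second factor is $\big(\sum_i\|g\|^{2p}_{L^{2p}(P_{Z_i})}\big)^{1/p}\le |B|^{1/p}\max_i\|g\|^{2p\cdot(1/p)}$, and its square root produces the $|B|^{1/2p}$ factor. The minimum of the two bounds gives the claim.

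\textbf{Absolutely continuous case.} If $dP_{Z_i}/dP_0\le C$, then the quantile of $|g(Z_i)|$ is dominated by the quantile of $|g|$ under $P_0$ after rescaling $u\mapsto u/C$. Hence all $Q_i$ can be replaced by a single $Q_0$ up to a factor $C$. The maximum over $i$ then needs no summation, so the $|B|^{1/2p}$ factor disappears and the norm becomes $L^{2p}(P_0)$. The same domination turns the norms in the $\Gamma^{\mathrm{sum}}$ bound into $\|g\|_{L^{2p}(P_0)}$ up to constants.
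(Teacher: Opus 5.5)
Your proposal is correct and follows essentially the paper's argument: expand the variance, apply Rio's covariance inequality to each pair with $\alpha(g(Z_i),g(Z_j))\le\alpha_{\mathcal F,B}(d_n(i,j))$, group the pairs by the profile $S_B(\rho)$, and finish with H\"older in $u$ plus $\max_i Q_i^{2p}\le\sum_i Q_i^{2p}$ to get the $|B|^{1/2p}$ factor. The bound on the pair coefficient holds directly by the definition of $\alpha_{\mathcal F,B}$ as a supremum of Rio's coefficient over $\mathcal F-\mathcal F$, and Rio's inequality (hence Davydov's) holds with that coefficient, so no comparison with a $\sigma$-field strong-mixing coefficient is needed.

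The difference from the paper is organizational. The paper routes both bounds through the intermediate norm $\|g\|_B$ of Lemma~\ref{lem:VarBlock.upper.0}, using Cauchy--Schwarz in $u$ for each pair and then H\"older. You instead use Davydov directly for $\Gamma^{\mathrm{sum}}_p$ and the pointwise bound $Q_iQ_j\le\max_k Q_k^2$ for $\Gamma^{\sharp}_p$, which reaches the same integral $\int_0^1\sum_{\rho}\frac{|S_B(\rho)|}{|B|}1\{\alpha_{\mathcal F,B}(\rho)\ge u\}\max_i Q_i^2(u)\,du$.
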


\begin{proof}
	See Appendix \ref{app:maximal}.
\end{proof}

Henceforth, for any $n \in \mathbb{N}$, any $p \in [1,\infty]$, and any $B \subseteq N_{n}$, let
\begin{align*}
	\Gamma_{p}(B) : = \min\left\{  \Gamma_{p}^{\mathrm{sum}}(B) , \Gamma_{p}^{\sharp}(B) |B|^{1/2p}  \right\}.
\end{align*}
Thus, \(\Gamma_p(B)\) controls the variance inflation generated by local network dependence inside \(B\). 
It summarizes the combined effect of two features of the block \(B\): the number of within-block $r$-distance neighbor pairs, measured by \(|S_B(r)| \) and the strength of dependence at distance \(r\), measured by \(\alpha_{\mathcal F,B}(r)\), for all possible distances  $r$ in $R(B)$. 

\paragraph{Maximal inequality} We are now in position to state the maximal inequality.

\begin{theorem}\label{thm:maximal}
	For any $n \in \mathbb{N}$, $a,b \geq 1$, $p \in [1,\infty]$, and for any partition $	\mathcal P_{M} : = \{B_{1},...,B_{M}\}$ and any $r$-coloring of this partition,  $\mathcal C_{r}[\mathcal P_{M}]$,
	\begin{align*}
		\left \| \sup_{f\in\mathcal{F}}  \mathbb G_{n}(f)   \right\|_{L^{1}(\mathbb P)} \lesssim &   \sqrt{K_{r}}  \max_{m \in [M]} \Gamma_{p}(B_{m})  \gamma_{2,a}(\mathcal F ,   \max_{i \in N_{n}}   || \cdot ||_{L^{2p}(P_{Z_{i}}) } )   \\
		& + \left(   \frac{K_{r}  \max_{m \in [M]} |B_{m}|}{\sqrt{n}}  + \sqrt{n} 	\bar{\tau}_{\operatorname{cone} \mathcal F}
		\left(
		\mathcal P_{M},
		\mathcal C_{r}[\mathcal P_{M}]
		\right) \right)  \gamma_{1,b}(\mathcal F ,   ||\cdot||_{\infty}).
	\end{align*}
\end{theorem}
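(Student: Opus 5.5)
We will use Theorem \ref{thm:coupling} to replace the original process by the coupled process $(Z_i^\ast)$, then apply generic chaining with a Bernstein inequality to the coupled process one color class at a time.

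\textbf{Step 1 (coupling decomposition).} Fix $\mathcal P_M$ and the coloring $\mathcal C_r$, and let $(Z^\ast_i)$ be the process from Theorem \ref{thm:coupling} with $\mathcal B=\operatorname{cone}\mathcal F$. Since $Z^\ast_{B_m}\stackrel{d}{=}Z_{B_m}$, we have $\mathbb E f(Z_i^\ast)=\mathbb E f(Z_i)$. Hence
\[
\sup_f \mathbb G_n(f)\le \sup_f \mathbb G_n^\ast(f)+\sup_f n^{-1/2}\sum_{i}|f(Z_i)-f(Z_i^\ast)|,
\]
where $\mathbb G^\ast_n$ is the empirical process built from $Z^\ast$.

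For the remainder term, write $f=\|f\|_\infty\cdot(f/\|f\|_\infty)$ with $f/\|f\|_\infty\in\operatorname{cone}\mathcal F$. The supremum is then bounded by $\sup_{f}\|f\|_\infty \cdot n^{-1/2}\sum_m d_{\mathcal B,|B_m|}(Z^\ast_{B_m},Z_{B_m})$. One should in fact take $\mathcal B$ to be the cone intersected with the unit sup-ball; we read this as the intended meaning of $\operatorname{cone}\mathcal F$.

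By \eqref{eqn:average.min.coupling}, the $L^1$ norm of this term is at most $\sqrt n\,\bar\tau\,\sup_f\|f\|_\infty$. Because $0\in\mathcal F$, we have $\sup_f\|f\|_\infty\le\gamma_{1,b}(\mathcal F,\|\cdot\|_\infty)$: take $l=0$ in the definition, where the single cell has diameter at least $\|f-0\|_\infty$. This gives the $\sqrt n\,\bar\tau$ term.

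\textbf{Step 2 (color-wise split).} Write $\mathbb G_n^\ast=\sum_{k=1}^{K_r}\mathbb G^{\ast}_{n,k}$, where $\mathbb G^{\ast}_{n,k}(f)=n^{-1/2}\sum_{m\in C_k}\sum_{i\in B_m}\{f(Z^\ast_i)-\mathbb E f(Z_i)\}$. Within each class, the block sums $Y_m(f):=\sum_{i\in B_m}\{f(Z_i^\ast)-\mathbb Ef(Z_i)\}$ are independent and centered. Each satisfies $|Y_m(f)|\le 2|B_m|\,\|f\|_\infty$. Its variance equals $|B_m|\,\operatorname{Var}(|B_m|^{-1/2}\sum_{i\in B_m}f(Z_i))$, which Lemma \ref{lem:VarBlock.upper} bounds by $|B_m|\,\Gamma_p(B_m)^2\max_i\|f\|^2_{L^{2p}(P_{Z_i})}$; the lemma applies to $f-g$ for increments.

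With $n_k=\sum_{m\in C_k}|B_m|$, Bernstein's inequality (Lemma \ref{lem:bern}) applied to increments $f-g$ gives the mixed tail
\[
\Pr\bigl(|\mathbb G^\ast_{n,k}(f-g)|>\sqrt{t}\,\sigma_k d_2(f,g)+t\,u_k\|f-g\|_\infty\bigr)\le 2e^{-t}.
\]
Here $\sigma_k^2=\max_m\Gamma_p(B_m)^2\, n_k/n$ and $u_k\lesssim \max_m|B_m|/\sqrt n$.

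\textbf{Step 3 (chaining) and the main obstacle.} Talagrand's mixed-tail generic chaining theorem then gives
\[
\mathbb E\sup_f|\mathbb G^\ast_{n,k}(f)|\lesssim \sigma_k\gamma_{2,a}(\mathcal F,d_2)+u_k\gamma_{1,b}(\mathcal F,\|\cdot\|_\infty).
\]
This is the step I expect to be the main obstacle. The definition uses diameter functions and order parameters $a,b$ with $\operatorname{card}(T_l)\le 2^{2^{l/a}}$, rather than the standard $2^{2^l}$, so the chaining must be run with two different admissible sequences: one for the $\sqrt t$ part and one for the $t$ part. I would combine them through their common refinement, à la Talagrand's Theorem 2.2.23. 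The change of order $a,b$ only affects constants, because it reindexes levels. Bounding the increments by diameter functions is immediate from $|f_1-f_2|\le D(f,T_l)$ pointwise.

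\textbf{Step 4 (summing over colors).} Sum over $k$. By Cauchy--Schwarz, $\sum_k\sqrt{n_k/n}\le\sqrt{K_r}$, which gives $\sqrt{K_r}\max_m\Gamma_p(B_m)\gamma_{2,a}$. The sup-norm parts sum to $K_r\max_m|B_m|/\sqrt n$ times $\gamma_{1,b}$. Combining with Step 1 yields the stated bound.
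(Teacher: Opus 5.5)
Your proof is correct, but it is organized differently from the paper's.

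\textbf{The paper's route.} Via Lemma \ref{lem:maximal.general2}, the paper first expands along the chain, $\mathbb G_n(f)=\sum_{l\ge1}\mathbb G_n(\Delta_l f)$, and only then couples, scale by scale. The coupling error at scale $l$ is bounded by $\sqrt n\,\bar\tau_{\Delta_l\mathcal F}$, converted into $2u(l)\sqrt n\,\bar\tau_{\operatorname{cone}\mathcal F}$, and summed using $u(l)\lesssim 2^{-l}\gamma_{1,b}$. The coupled part is handled by a hand-built chaining argument (Lemma \ref{lem:bound.pr.talagrand}). It aggregates the $K_r$ color classes inside one union bound per scale, via exponential moments of a weighted average, and then applies Jensen. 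The two admissible sequences are merged into a single one of order $\min\{a,b\}$.

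\textbf{Your route.} You couple once, up front, with $\mathcal B=\operatorname{cone}\mathcal F$; this is exactly how the paper defines the cone in the appendix. You dispose of the whole coupling error at once via \eqref{eqn:average.min.coupling} and $\sup_f\|f\|_\infty\le\gamma_{1,b}(\mathcal F,\|\cdot\|_\infty)$. You then apply Talagrand's mixed-tail chaining theorem as a black box to each color class separately, and sum with Cauchy--Schwarz.

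\textbf{What each ordering buys.} The paper's ordering buys generality. Because coupling comes after the chaining decomposition, the partition and coloring may vary with the scale $l$, which a single up-front coupling cannot accommodate. Theorem \ref{thm:maximal} itself, with a fixed partition, does not use this. Your ordering buys simplicity and robustness. It never compares $\bar\tau_{\Delta_l\mathcal F}$ with $\bar\tau_{\operatorname{cone}\mathcal F}$. That comparison is delicate: the pseudometric generated by normalized increments $\Delta_l f/\|\Delta_l f\|_\infty$ is not in general dominated by a fixed multiple of $d_{\operatorname{cone}\mathcal F}$. The cone argument of Lemma \ref{lem:tau.properties} by itself only gives $\bar\tau_{\Delta_l\mathcal F}\le u(l)\,\bar\tau_{\operatorname{cone}\Delta_l\mathcal F}$. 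Your one-shot treatment also gives the slightly sharper factor $\sup_f\|f\|_\infty$ in front of $\sqrt n\,\bar\tau$.

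\textbf{A correction to Step 3.} The orders $a,b$ are not absorbed by a reindexing that costs only constants. For large $a$, $\gamma_{2,a}$ can be much larger than Talagrand's $\gamma_2$: for $N$ points at mutual distance one, $\gamma_{2,a}\gtrsim(\log N)^{a/2}$ while $\gamma_2\lesssim(\log N)^{1/2}$. The correct reason is simpler and goes in the direction you need. For $a\ge1$ one has $2^{2^{l/a}}\le 2^{2^l}$, so every order-$a$ sequence is admissible in Talagrand's sense. Moreover, $\sup_{f_1,f_2\in A}\|f_1-f_2\|\le\bigl\|\sup_{f_1,f_2\in A}|f_1-f_2|\bigr\|$ for the lattice norms involved. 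Together these show that the functionals delivered by the mixed-tail theorem are dominated by $\gamma_{2,a}$ and $\gamma_{1,b}$.
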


\begin{proof}
	See Appendix \ref{sec:ProofMaximal}.
\end{proof}

This theorem is the generalization of Theorem 1 in \cite{Pouzo2026} for time series to graph-dependent data. In time series, the maximal inequality is controlled by the generic chaining complexity of the class and the relevant marginal norms. The present result preserves that structure but extends it to general graph-dependent data by introducing two additional objects: a graph-coloring device, which separates approximately independent blocks, and dependence coefficients, which control the error from replacing the original process by blockwise independent approximations.

Since the partition and the coloring are arbitrary and the left-hand side does not depend on either of them, the bound may be optimized over all admissible choices by minimizing the right-hand side over all partitions and over all admissible $r$-colorings. In practice, carrying out this minimization may be impractical. But, it is enough to find \emph{one} partition and coloring for which the resulting geometric and dependence terms are controlled.

Another relevant feature of the bound is that its components separate three distinct sources. The first source is the size of the function class, measured by the generic chaining functionals $\gamma_{2,a}\left(\mathcal F,\max_{i\in N_n} ||\cdot||_{L^{2p}(P_{Z_i})}\right)$ and $\gamma_{1,b}(\mathcal F,||\cdot||_\infty)$, 
 these are standard complexity measures, introduced by Talagrand (cf. \cite{talagrand2005,talagrand2014}), further developed and used in several papers (e.g., \cite{VanHandel2018,VanHandel2018b}), and bounded above by classical entropy quantities such as Dudley’s entropy integral (see \cite{talagrand2014}). The second source is the geometry of the graph and of the chosen block partition. This is summarized the $r$-coloring and its chromatic number $K_r$, and the within-block distance neighbor pair profile, $S_{B_m}$. The third source is the mixing structure. This enters in two distinct ways: one captured by the $\tau$-coefficient, 
and another, local within-block dependence, captured by the within-block dependence factor, $\Gamma_p(B_m)$. The latter combines the within-block distance neighbor-pair profile  with the decay of the $\alpha$-mixing coefficients.

Consequently, the theorem reduces the problem of proving a maximal inequality to three tasks: bounding the chaining complexity of $\mathcal F$, constructing a block partition with a manageable $r_n$-chromatic number and distribution of within-block distance neighbor pairs, and verifying sufficient decay of the $\tau$-mixing and within-block $\alpha$-mixing coefficients.

The graph-geometric tasks described above appear to be novel and intrinsic to graph-dependent data. We study them in detail below, while bounds on the graph-based $\tau$-mixing coefficient are developed for specific models in Section~\ref{sec:examples}.\footnote{For the remaining task of bounding the complexity functionals, one can draw on the existing generic-chaining literature. In particular, \cite{talagrand2005,talagrand2014} show that these functionals can be controlled by familiar quantities such as Dudley's entropy integral and Ossiander's bracketing integral (e.g., \cite{VdV-W1996}). Talagrand's results further relate the complexity functionals, under suitable conditions on $\mathcal F$ and the underlying metric, to the expected supremum of an associated Gaussian process. See \cite{Pouzo2026} for a discussion of how these tools can be applied to empirical processes for time-series data. In addition, \cite{VanHandel2018,VanHandel2018b} provide bounds based on the entropy numbers of suitably chosen ``simple'' subsets of the indexing class.}

Before examining these graph-geometric components under specific network structures, however, we place Theorem~\ref{thm:maximal} within two classical benchmarks: the exact dependency-graph and time series settings. These benchmarks provide useful sanity checks by showing how the theorem relates to known results.

\paragraph{Benchmark I: Finite Classes under a Dependency Graph} We illustrate Theorem~\ref{thm:maximal} by specializing it to the classical dependency-graph setting. The theorem recovers the standard Bernstein-type maximal inequality rate based on a proper coloring of the \emph{dependency} graph with a finite function class (see \cite{janson2004largedeviations}).

Let \(\mathfrak D_n=(N_n,E_n)\) be a dependency graph for
\((Z_i)_{i\in N_n}\), in the sense that, for any two disjoint sets
\(A,B\subseteq N_n\) without edges between them, $\sigma(Z_i:i\in A)$  and $\sigma(Z_i:i\in B)$ are independent.

\begin{corollary}
\label{cor:finite-dependency-graph}
 If \(\mathcal F\) is such that \(\lvert\mathcal F\rvert=J\)
then
\begin{align}
    \mathbb E\left[
        \sup_{f\in\mathcal F}
        \left|
            \frac{1}{n}
            \sum_{i\in N_n}
            \left(
                f(Z_i)-\mathbb E[f(Z_i)]
            \right)
        \right|
    \right]
    \lesssim
    \sigma
    \sqrt{
        \frac{\chi( \mathfrak D_n )\log(2J)}{n}
    }
    +
    b\,
    \frac{\chi( \mathfrak D_n  )\log(2J)}{n},
    \label{eq:finite-dependency-graph}
\end{align}
with $\sigma : = \max_{f\in\mathcal F}\max_{i\in N_n}
    \bigl\|f(Z_i)-\mathbb E[f(Z_i)]\bigr\|_{L^2}$ and $
    b : = \sup_{f\in\mathcal F}\|f\|_{\infty}$. 
and  \(\chi(\mathfrak D_n)\) denotes the chromatic number of $\mathfrak D_{n}$.

\end{corollary}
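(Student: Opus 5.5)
Our plan is to specialize Theorem~\ref{thm:maximal} to the dependency graph itself, taking singleton blocks. We set \(\mathfrak G_n=\mathfrak D_n\) (so that graph distance is the dependency-graph distance) and use the partition \(\mathcal P_n=\{\{i\}:i\in N_n\}\), with \(M=n\) and \(\max_m|B_m|=1\). A proper coloring \(c\) of \(\mathfrak D_n\) with \(\chi(\mathfrak D_n)\) colors is exactly a proper \(1\)-coloring of this partition: two same-colored vertices are nonadjacent, so their distance is at least \(2>1\). Hence \(K_1\le\chi(\mathfrak D_n)\).

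Within a color class, the predecessor set \(\operatorname{Pred}_\ell(C_k)\) contains no vertex adjacent to the current vertex. By the dependency-graph property, \(\sigma(Z_j:j\in\operatorname{Pred}_\ell(C_k))\) is independent of \(Z_{m_{k,\ell}}\). Each \(\tau\)-coefficient therefore vanishes, since \(X'=Z_{B_m}\) itself is an admissible copy in \eqref{eqn:min.coupling.0}. So \(\bar\tau_{\operatorname{cone}\mathcal F}=0\). For singleton blocks, \(S_B(\rho)\) is nonempty only at \(\rho=0\), with \(|S_B(0)|=1\). Taking \(p=1\) gives \(\Gamma_1(B)=1\).

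Theorem~\ref{thm:maximal} then yields
\[
\Big\|\sup_f \mathbb G_n(f)\Big\|_{L^1}\lesssim \sqrt{\chi}\,\gamma_{2,a}(\mathcal F,\max_i\|\cdot\|_{L^2(P_{Z_i})})+\frac{\chi}{\sqrt n}\,\gamma_{1,b}(\mathcal F,\|\cdot\|_\infty).
\]
Dividing by \(\sqrt n\) produces the normalization on the left of \eqref{eq:finite-dependency-graph}.

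It remains to bound the chaining functionals for a finite class of size \(J\). We work with the augmented class \(\mathcal F\cup(-\mathcal F)\cup\{0\}\), which has \(2J+1\) elements; this handles the absolute value and the requirement \(0\in\mathcal F\). We take the trivial admissible sequence: \(T_0=\{\mathcal F\}\) and \(T_l\) equal to the partition into singletons once \(2^{2^{l/a}}\ge 2J+1\), that is, for \(l\ge l^*\approx a\log_2\log_2(2J+1)\). For \(l\ge l^*\) all diameters are zero. Every earlier term has diameter function bounded by \(2\max\) of the relevant norm. This gives \(\gamma_{2,a}\lesssim \sum_{l<l^*}2^{l/2}\cdot\sigma' \lesssim 2^{l^*/2}\sigma'\). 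With \(a=1\) this is of order \(\sqrt{\log(2J)}\,\sigma'\). Similarly, \(\gamma_{1,b}\lesssim 2^{l^*}b\asymp\log(2J)\,b\) with \(b=1\).

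The main obstacle is that the chaining norm involves \(\|f\|_{L^2}\) of uncentered functions and diameters \(\sup|f_1-f_2|\), whereas the statement uses centered \(\sigma\). We handle this by applying the result to the centered class \(\{f-\mathbb E f(Z_i)\}\). If the marginals differ across \(i\), we apply it instead to \(g_i=f-\mathbb Ef(Z_i)\) coordinatewise. The centering cancels in \(\mathbb G_n\), and the bound \(\|\cdot\|_\infty\le 2b\) is unaffected up to constants. A second point is that the pointwise sup of the diameter function over a coarse cell could exceed \(\max_f\) norms. For \(l<l^*\), however, the cell is all of \(\mathcal F\), and \(\sup_{f_1,f_2\in\mathcal F}|f_1-f_2|\le \sum_{f}|f|\) is too crude. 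We would instead bound \(\|\max_{f\in\mathcal F}|f|\|_{L^2}\) by a maximal-type estimate, or simply refine the partition immediately to singletons at level \(l=1\) by choosing the order parameter \(a\) large enough (\(2^{2^{1/a}}\ge 2J+1\) fails). Alternatively, we take the growth \(\operatorname{card}(T_l)\le 2^{2^l}\) with the sum starting effectively at \(l^*\), which is the standard finite-class generic-chaining bound \(\gamma_{2}\lesssim\sqrt{\log J}\,\operatorname{diam}\). Combining these bounds gives \eqref{eq:finite-dependency-graph}.
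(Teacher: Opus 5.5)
Your route is the paper's: singleton blocks in $\mathfrak D_n$, and a proper coloring of $\mathfrak D_n$ read as a proper $1$-coloring, so $K_1=\chi(\mathfrak D_n)$. Then $\bar\tau_{\operatorname{cone}\mathcal F}=0$ because same-colored predecessors are non-adjacent, $\Gamma_1(\{i\})=1$, and finite-class bounds finish.

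Adding $0$ to the class is a correct refinement, since the paper's $\mathcal F\cup(-\mathcal F)$ need not contain $0$. Your centering idea is also sound once you implement it as $\tilde Z_i=(Z_i,i)$, $\tilde f(z,i)=f(z)-\mathbb E f(Z_i)$. This leaves the dependency graph and $\mathbb G_n$ unchanged, and gives $\|\tilde f(\tilde Z_i)\|_{L^2}\le\sigma$ and $\|\tilde f\|_\infty\le 2b$.

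The genuine gap is the step you flag but do not close, $\gamma_{2,a}\lesssim\sigma\sqrt{\log(2J)}$. The paper's proof asserts the same bound without argument. In Definition~\ref{def:talagrand} the norm is applied to the pointwise diameter function, and $\operatorname{card}(T_0)=1$ makes the $l=0$ term involve the whole class. Hence
\[
\gamma_{2,a}\Bigl(\mathcal F,\max_{i}\|\cdot\|_{L^2(P_{Z_i})}\Bigr)\ \ge\ \max_{i}\Bigl\|\sup_{f_1,f_2\in\mathcal F}|f_1-f_2|\Bigr\|_{L^2(P_{Z_i})}.
\]
Take identically distributed $Z_i$ and $f_j=1_{A_j}$ with disjoint $A_j$ of probability $\pi\le 1/J$. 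The right side is then of order $\sqrt{J\pi}\asymp\sqrt J\,\sigma$, centered or not. So none of your three remedies works:
\begin{itemize}
\item No maximal-type estimate can yield $\sigma\sqrt{\log(2J)}$, by the example above.
\item Refining to singletons at level $1$ is impossible for $a\ge1$, and would not remove the $l=0$ term anyway.
\item The bound $\gamma_2\lesssim\sqrt{\log J}\,\operatorname{diam}$ concerns metric diameters $\sup_{f_1,f_2}d(f_1,f_2)$, not the norm of a pointwise supremum.
\end{itemize}
For $\|\cdot\|_\infty$ the two notions coincide because the suprema commute, so your sup-norm functional bound of order $b\log(2J)$ is fine. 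Lemma~\ref{lem:finite-class-gamma-bounds} likewise holds only in that case.

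The repair is to stop at Lemma~\ref{lem:maximal.general2} rather than Theorem~\ref{thm:maximal}. That lemma is stated through the chaining increments $\Delta_l f$; diameter functions enter only when the theorem bounds $|\Delta_l f|$ by them. Take admissibility parameters $c=d=1$. Set $T_l=\{\mathcal F\}$ with $\pi_l f=0$ for $l<l^\ast$, and the singleton partition for $l\ge l^\ast$. Here $l^\ast$ is the least integer with $2^{2^{l^\ast}}\ge 2J+1$, so $2^{l^\ast}\asymp\log(2J)$. Then:
\begin{itemize}
\item the only nonzero increment is $\Delta_{l^\ast}f=f$;
\item the union bound at that level runs over at most $2J+1$ functions;
\item $\|f\|_{\{i\}}=\|f(Z_i)\|_{L^2}$ for singleton blocks;
\item the $\tau$-term vanishes.
\end{itemize}
The lemma therefore gives $\sqrt{\chi(\mathfrak D_n)\log(2J)}\,\max_{f,i}\|f(Z_i)\|_{L^2}+\chi(\mathfrak D_n)\log(2J)\,b/\sqrt n$. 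Applying this to the centered class and dividing by $\sqrt n$ gives \eqref{eq:finite-dependency-graph}.

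Equivalently, the proof of Theorem~\ref{thm:maximal} only uses that $\|\Delta_l f\|$ is at most the diameter of the cell. So the theorem stays true with metric diameters, and then the standard finite-class bound you cite is legitimate.
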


\begin{proof}
   See Appendix  \ref{sec:dep-graph.proof}.
\end{proof}

Classical dependency-graph inequalities, such as \cite{janson2004largedeviations}, yield Bernstein bounds for individual sums and, by a union bound, finite function classes. Corollary~\ref{cor:finite-dependency-graph} recovers this structure using the chromatic number. 

Theorem~\ref{thm:maximal} extends this benchmark to possibly infinite classes through generic chaining and to more general graph dependence through the coupling and within-block dependence terms. Under exact dependency-graph dependence, however, sharper inequalities replace \(\chi(\mathfrak D_n)\) with the fractional chromatic number \(\chi^*(\mathfrak D_n)\); see \cite{ScheinermanUllman1997}. Thus, our theorem is broader but may not recover the sharpest graph factor, except when \(\chi(\mathfrak D_n)\) and \(\chi^*(\mathfrak D_n)\) have the same order. Inspection of the proofs leads us to conjecture that Theorem \ref{thm:maximal} extends to fractional colorings, but a formal development is outside the scope of this paper and is left to future work.


The preceding benchmark is obtained by taking singleton blocks, in which case
the coupling error vanishes and the only relevant geometric quantity is the
chromatic number of the dependency graph. We now turn to the more general
setting in which graph-separated observations may remain dependent. In this
case, the choice of block partition, the coloring cost, and the within-block
distance profile become substantive. 

\paragraph{Benchmark II: Time series} In the time series setting  $N_{n} \subseteq \mathbb{Z}$ and $d_n(i,j)=|i-j|$, giving the graph a clear geometric structure. This structure suggests a partition of consecutive blocks, $B_m=\{(m-1)q+1,\ldots,mq\}$ for $m \in [n/q]$.\footnote{We assume that $M = n/q$ is an integer for simplicity.}

The within-block $B_{m}$ distance neighbor profile is trivial, uniformly bounded. To see this, observe that since $B_{m}$ is ordered and has length $q$, $|S_{B_{m}}(\rho)|=0$ for any $\rho \geq q$. At distance $\rho=q-1$ there are only two pairs ($((m-1)q+1,mq)$ and $(mq,(m-1)q+1)$); at distance $\rho = q-2$ there are four (unordered) pairs (first and second-to-last and second and last). Thus, iterating in this manner, it follows that $|S_{B_{m}}(\rho)| = 2(q-\rho)$ for all $\rho \in \{0,\ldots,q-1\}$. 
This implies that $\Gamma_{p}(B_m)$ (or at last a bound of it) only depends on the within-block dependence, not on the neighbor profile.

Given an $r \geq 1$, blocks that conflict at radius $r$ are precisely those with sufficiently nearby indices. Indeed, any blocks $B_{m},B_{\ell}$ have distance  $d_n(B_m,B_{\ell}) 	=	\max\{0,\; (|m-\ell|-1)q + 1\}$. Hence, if they are within $r$ of each other then $|m-\ell| \leq 1+\frac{r}{q}$. This readily implies that the degree of any node $m$ is bounded (up to universal constants) by $1+r/q$. Hence, by the well-known greedy bound, the $r$-chromatic number is also bounded by $1+r/q$.\footnote{See Appendix \ref{app:graph} for definitions of these concepts.}

By inputting these bounds into Theorem \ref{thm:maximal} and setting $q_{n}=r_{n}$, we replicate (up to constants) Theorem 1 in \cite{Pouzo2026}.


\subsection{Geometric Structure of Graphs}
\label{sec:geometry}

The biggest difference between the bounds obtained for IID and time series data and the one in Theorem \ref{thm:maximal} is due to the intrinsic ``geometric structure'' of the time series graph. We now provide bounds on the key quantities associated to this structure in canonical examples of graphs. 


\paragraph{ Uniform doubling graphs with polynomial growth} The next result isolates the key geometric features of a time series graphs to obtain an analogous result but for a larger class of graphs which we call \emph{uniform doubling with polynomial growth of dimension $d$}.

A sequence of metric spaces $(X_n,d_n)_{n \in \mathbb N}$ is uniformly doubling metric space with dimension $d$ if there exists a constant $C_{\mathrm{dbl}} < \infty$, independent of $n$, such that for every 
	$x \in X_n$ and every $R>0$,
	\begin{align*}
		B(x,R)
		\subseteq
		\bigcup_{k=1}^{C_{\mathrm{dbl}}}
		B(x_k, R/2),
	\end{align*}
	for some points $x_1,\dots,x_{C_{\mathrm{dbl}}} \in X_n$. The associated dimension is given by $d : = \log_{2} C_{\mathrm{dbl}}$. 


   The doubling condition distills a fundamental feature of finite-dimensional Euclidean spaces: their geometric complexity grows at a controlled, polynomial rate with distance and scale. This is useful because it provides Euclidean-type covering and packing bounds, with the doubling dimension playing the role of the Euclidean dimension, without requiring a linear structure.
	 
	The sequence   $(X_n,d_n)_{n \in \mathbb{N}}$ is  said to have  polynomial (shell) growth if there exists a constant $C  < \infty$, independent of $n$, such that  \begin{align*}
		\max_{x \in X_{n}} |\partial B(x,s) | \leq C s^{d-1} ~\forall s \geq 1.  
	\end{align*}

        These conditions are natural for networks whose geometry is induced by an underlying fixed-dimensional metric structure, such as geographic proximity networks, spatial lattices and geographic or low-dimensional latent-space networks, but may fail in networks with hubs or rapidly expanding neighborhoods.

 \begin{lemma}
 	\label{lem:PolyGraph.color}
 	Let \(\mathfrak{G}_n=(N_n,V_n)\) be uniform doubling with polynomial growth graph of dimension $d$. Then, for every \(q\ge 1\), there exists
 	a Voronoi partition \(\mathcal P_{M} : =\{B_1,\ldots,B_M\}\) of \(N_n\) such that
 	\begin{align}\label{eqn:Double.Poly.1}
 		\max_{1\le m\le M}|B_m|
 		\lesssim q.
 	\end{align}
 	And, for every \(r\ge 1\),
 	\begin{align}\label{eqn:Double.Poly.chrom}
 		K_{r}(\mathcal P_{M} )
 		\lesssim
 		1+\frac{r^d}{q},
 	\end{align}
and, for any $m \in [M]$,
 	\begin{align}\label{eqn:Double.Poly.shells}
 	\frac{|S_{B_m}(r)|}{|B_m|}
 	\lesssim
 	\min\{r^{d-1},q\}.
 \end{align}
 
 \end{lemma}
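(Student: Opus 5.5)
Set the scale $R:=q^{1/d}$, so that a ball of radius $R$ has, by the polynomial shell-growth condition, cardinality $|B(x,R)|\le 1+\sum_{s=1}^{R}Cs^{d-1}\lesssim R^d= q$. Take a maximal $R$-separated set of centers $\{x_1,\dots,x_M\}\subseteq N_n$, meaning pairwise distances exceed $R$ and every node lies within $R$ of some center. Define the Voronoi cells $B_m$ by assigning each node to its nearest center, with ties broken by smallest index. Each $B_m\subseteq B(x_m,R)$, so \eqref{eqn:Double.Poly.1} follows at once. If the graph is disconnected, the same construction works with $d_n=\infty$ across components, since balls never cross components. When $q$ is not a perfect $d$-th power, use $R=\lfloor q^{1/d}\rfloor\vee 1$; this costs only constants.

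For the chromatic bound \eqref{eqn:Double.Poly.chrom}, build the conflict graph on $[M]$ with $m\sim m'$ iff $d_n(B_m,B_{m'})\le r$. By the greedy bound (Appendix~\ref{app:graph}), $K_r\le 1+\Delta$, where $\Delta$ is the maximal conflict degree, so it suffices to bound $\Delta$. If $m\sim m'$, then $d_n(x_m,x_{m'})\le 2R+r$, so the conflicting centers all lie in $B(x_m,2R+r)$ and are $R$-separated. I would bound their number by a packing argument via the doubling property. Iterating the doubling covering $j$ times covers $B(x_m,2R+r)$ by $C_{\mathrm{dbl}}^j$ balls of radius $(2R+r)2^{-j}$. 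Choose $j$ with $(2R+r)2^{-j}<R/2$, so each small ball contains at most one center. This gives $\Delta\lesssim C_{\mathrm{dbl}}^{\log_2((2R+r)/R)+O(1)}\lesssim (1+r/R)^d\lesssim 1+r^d/q$, which is \eqref{eqn:Double.Poly.chrom}.

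Alternatively, one can count volume using disjoint balls $B(x_{m'},R/2)$. This requires lower bounds on ball sizes, which the stated assumptions do not give, so I would use the pure doubling packing route. That route is the main point to check: the argument needs the doubling covers to use centers in $X_n$ (given), and the exponent $\log_2 C_{\mathrm{dbl}}=d$ then matches the definition of the dimension.

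For the shell bound \eqref{eqn:Double.Poly.shells}, write
\[
|S_{B_m}(r)|=\sum_{i\in B_m}|\{j\in B_m: d_n(i,j)=r\}|\le \sum_{i\in B_m}\min\{|\partial B(i,r)|,|B_m|\}.
\]
The polynomial growth assumption gives $|\partial B(i,r)|\le Cr^{d-1}$ for $r\ge1$, and $|B_m|\lesssim q$ by the first part. Dividing by $|B_m|$ yields $\min\{r^{d-1},q\}$ up to constants. The case $r=0$ is trivial, with ratio $1$.

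The only delicate step overall is the packing estimate in the chromatic bound. The rest is bookkeeping on the choice $R\asymp q^{1/d}$.
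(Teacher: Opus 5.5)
Your proof is correct and follows the paper's argument: the same maximal $q^{1/d}$-separated Voronoi construction, the same greedy bound on the $r$-conflict graph with conflicting centers confined to $B(x_m,2R+r)$, and the same shell count $|S_{B_m}(r)|\le |B_m|\min\{Cr^{d-1},|B_m|\}$. The only difference is that you do the doubling packing estimate directly in the ambient graph instead of invoking Lemma~\ref{lem:PolGraph.growth} on the center set with its induced metric. This avoids the paper's looser claim that the induced metric on the centers is doubling with the same dimension $d$.
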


\begin{proof}
	See Appendix \ref{app:geometry}. 
\end{proof}


By taking $q_n\asymp r_n^d$ the coloring number remains bounded and Theorem~\ref{thm:maximal} yields:

\begin{proposition}
\label{pro:maximal.DoublePoly}
	Let \(\mathfrak{G}_n=(N_n,V_n)\) be uniform doubling with polynomial growth graph of dimension $d$. 
    Then, for any $n \in \mathbb{N}$, any $a,b \ge 1$, any $p \in [1,\infty]$, and any $d \geq 1$, there exists a partition \(\mathcal P_{M_{n}}=\{B_1,\ldots,B_{M_n}\}\)  satisfying $\max_{m\in[M_n]}|B_m|\lesssim q_n $ with $q_{n} \in [n] $, such that
	\begin{align*}
		\left \| \sup_{f\in\mathcal{F}}  \mathbb G_{n}(f)   \right\|_{L^{1}(\mathbb P)} \lesssim 	\Lambda_{p,d}(q_{n})  \gamma_{2,a}(\mathcal F ,   \max_{i \in N_{n}}   || \cdot ||_{L^{2p}(P_{Z_{i}}) } )   + \left(   \frac{q_{n}}{\sqrt{n}}  + \sqrt{n} 	\bar{\tau}_{\operatorname{cone} \mathcal F}
		\left( 	\mathcal P_{M_{n}}, 	\mathcal C_{q_{n}^{1/d}}[\mathcal P_{M_{n}} ] \right)  \right)  \gamma_{1,b}(\mathcal F ,   ||\cdot||_{\infty}).
	\end{align*}
where 
\begin{align*}
	\Lambda_{p,d}(q_{n}) : = 	\left\{  
	\begin{array}{ll}
	  q_{n}^{\frac{1-\frac{1}{d}}{2}} \sqrt{ 
\min \left\{  \Lambda^{\sharp}_{p,d}(q_{n}),  \Lambda^{\operatorname{sum}}_{p,d}(q_{n})  \right\}  }  & if~p \in (1,\infty] \\
		\sqrt{ 	q_{n}  } & if~p = 1
	\end{array}
\right.,
\end{align*}
with $\Lambda^{\sharp}_{p,q}(q_{n}) : = q_{n}^{\frac{1+p}{p}} \left(
	\int_0^1
	\left[
	\sum_{\rho = 0}^{ \lfloor 2 q^{1/d}_{n} \rfloor+1 }
	1\{\alpha_{\mathcal F,B_m}(\rho)\geq u\}
	\right]^{\frac{p}{p-1}}
	du
	\right)^{1-\frac{1}{p} } $ and $\Lambda^{\operatorname{sum}}_{p,q}(q_{n}) : =  \sum_{\rho=0}^{	\lfloor 2 q^{1/d}_{n} \rfloor+1}   \left(  \alpha_{ \mathcal{F} , B_m }(\rho)   \right)^{1-\frac{1}{p} }$.

\end{proposition}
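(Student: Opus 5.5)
The plan is to apply Theorem~\ref{thm:maximal} to the Voronoi partition supplied by Lemma~\ref{lem:PolyGraph.color}, with the coloring radius chosen as $r_n := q_n^{1/d}$. Lemma~\ref{lem:PolyGraph.color} with $q=q_n$ gives $\max_m |B_m|\lesssim q_n$. Its chromatic bound \eqref{eqn:Double.Poly.chrom} with $r=r_n$ then gives $K_{r_n}\lesssim 1+r_n^d/q_n = 2$, so the coloring cost is bounded. Substituting into Theorem~\ref{thm:maximal}, the second term becomes $\bigl(q_n/\sqrt n+\sqrt n\,\bar\tau_{\operatorname{cone}\mathcal F}(\mathcal P_{M_n},\mathcal C_{q_n^{1/d}}[\mathcal P_{M_n}])\bigr)\gamma_{1,b}(\mathcal F,\|\cdot\|_\infty)$, which is exactly the stated one. (If $q_n^{1/d}<1$ is an issue, take $r_n=\max\{1,q_n^{1/d}\}$; since $q_n\ge1$, this costs only constants.)

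What remains is to show $\max_m\Gamma_p(B_m)\lesssim\Lambda_{p,d}(q_n)$, using the shell bound \eqref{eqn:Double.Poly.shells}, namely $|S_{B_m}(\rho)|/|B_m|\lesssim\min\{\rho^{d-1},q_n\}$. First I need the range of distances $R(B_m)$ to be bounded. For a Voronoi cell built around a net of scale comparable to $q_n^{1/d}$, the cell diameter is at most about $2q_n^{1/d}+1$. This should follow from the construction in the proof of Lemma~\ref{lem:PolyGraph.color}: cells are balls of radius $\asymp q_n^{1/d}$, and their size bound $\lesssim q_n$ comes from polynomial growth. So I would re-derive the radius bound from that construction and conclude $R(B_m)\subseteq\{0,\dots,\lfloor 2q_n^{1/d}\rfloor+1\}$, possibly together with $\infty$ where $\alpha=0$. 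This diameter claim is the step I expect to be the main obstacle, because Lemma~\ref{lem:PolyGraph.color} as stated does not record it. If necessary I would restate the lemma's construction to include it.

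Given the bounded range, bound the shell weight by $\rho^{d-1}\le (2q_n^{1/d}+1)^{d-1}\lesssim q_n^{(d-1)/d}$ uniformly over the range. Pulling this constant out of the sums gives the two estimates. For $\Gamma^{\mathrm{sum}}_p$:
\[
\Gamma^{\mathrm{sum}}_p(B_m)\lesssim q_n^{\frac{1-1/d}{2}}\Bigl(\sum_{\rho\le \lfloor2q_n^{1/d}\rfloor+1}\alpha_{\mathcal F,B_m}(\rho)^{1-1/p}\Bigr)^{1/2}.
\]
For $\Gamma^\sharp_p$, pull the weight inside the indicator sum first, before raising to the power $p/(p-1)$ and then to $1-1/p$. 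This yields the same factor $q_n^{(1-1/d)/2}$ times the root of the integral term. The remaining factor $|B_m|^{1/2p}\lesssim q_n^{1/2p}$ is absorbed into the prefactor of $\Lambda^\sharp$, whose power of $q_n$ is generous enough to dominate it. Taking the minimum gives the $p\in(1,\infty]$ case. For $p=1$, use $\sum_\rho|S_{B_m}(\rho)|=|B_m|^2$, so $\Gamma_1(B_m)\le\sqrt{|B_m|}\lesssim\sqrt{q_n}$.

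Finally, combine the two pieces with $\sqrt{K_{r_n}}\lesssim1$ to conclude. The uniformity of all constants in $m$ and $n$ comes from the uniform doubling and polynomial growth constants.
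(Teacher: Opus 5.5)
Your proposal is correct and follows the paper's proof essentially step by step. You use the Voronoi partition of Lemma~\ref{lem:PolyGraph.color} with $r_n=q_n^{1/d}$, so that $K_{r_n}=O(1)$, and the diameter bound $\operatorname{diam}(B_m)\le 2q_n^{1/d}$, which the paper also takes from the proof of that lemma via $B_m\subseteq B(x_m,q_n^{1/d})$; you then pull the uniform shell weight $q_n^{1-1/d}$ out of both $\Gamma^{\mathrm{sum}}_p$ and $\Gamma^\sharp_p$, correctly noting that the $q_n^{1+1/p}$ prefactor in $\Lambda^\sharp$ more than absorbs $|B_m|^{1/2p}$, and your $p=1$ shortcut via $\sum_\rho|S_{B_m}(\rho)|=|B_m|^2$ is slightly more direct than the paper's count of $|R(B_m)|$ times the maximal shell weight.
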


\begin{proof}
	See Appendix \ref{app:geometry}.
\end{proof}

\paragraph{Exponential Growth Graphs} We now analyze the maximal inequality for graphs that have more rapidly expanding neighborhoods than the class of polynomial growth graphs. We say  \(\mathfrak{G}_n=(N_n,V_n)\) has \emph{exponential (shell) growth with constant $c$}  if there exist constants \(C,c>0\), independent of \(n\), such that
\begin{align}
	\sup_{i\in N_n} |\partial B(i,s)|
	\le C e^{c s}
	\qquad
	\text{for all } s\ge 1.
\end{align}

\begin{lemma}
	\label{lem:coloring.exponential}
	Let \(\mathfrak{G}_n=(N_n,V_n)\) be an exponential (shell) growth with constant $c$ graph. 	Then, for every \(q\ge 1\), there exists a partition
	\(\mathcal P_M=\{B_1,\ldots,B_M\}\) of \(N_n\) such that $\max_{1\le m\le M}|B_m|
		\lesssim q$,  and
for every \(r\ge 1\) and $m \in [M]$, $K_{r}(\mathcal P_M) 
		\lesssim
		q^{2} e^{c r}$,and $
		\frac{|S_{B_m}(r)|}{|B_m|}
		\lesssim
		\min\{e^{c r},q\}$. 
\end{lemma}

\begin{proof}
	See Appendix \ref{app:geometry}. 
\end{proof}

The next result specializes the maximal inequality in Theorem \ref{thm:maximal} to this class of graphs. 

\begin{proposition}
	\label{pro:maximal.exponential.growth}
	Let \(\mathfrak{G}_n=(N_n,V_n)\) be an exponential (shell) growth with constant $c$ graph.  Then, for any $n \in \mathbb{N}$, any $a,b \geq 1$, and any $p \in [1,\infty]$, there exists a  partition \(\mathcal P_{M_n}=\{B_1,\ldots,B_{M_n}\}\) satisfying $\max_{m\in[M_n]}|B_m|\lesssim q_n $ with  \(q_n\asymp 1\) such that
	\begin{align*}
		\left\|
		\sup_{f\in\mathcal F}\mathbb G_n(f)
		\right\|_{L^1(\mathbb P)}
		\lesssim\;&
		e^{c \frac{r_n}{2}}
		\gamma_{2,a}
		\left(
		\mathcal F,
		\max_{i\in N_n}\|\cdot\|_{L^{2p}(P_{Z_i})}
		\right)
		+
		\left(
		\frac{e^{c r_n}}{\sqrt n}
		+
		\sqrt n\,
		\bar{\tau}_{\operatorname{cone}\mathcal F}(r_n)
		\right)
		\gamma_{1,b}
		\left(
		\mathcal F,\|\cdot\|_\infty
		\right).
	\end{align*}
    where 
	\(	\bar{\tau}_{\operatorname{cone} \mathcal F}
	\left( 	\mathcal P_{M_{n}}, 	\mathcal C_{r_{n}}[\mathcal P_{M_{n}} ] \right)    = : 
	\bar{\tau}_{\operatorname{cone}\mathcal F}(r_n)\)

\end{proposition}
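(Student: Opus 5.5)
The plan is to specialize Theorem~\ref{thm:maximal} to the partition supplied by Lemma~\ref{lem:coloring.exponential}, taking block size $q_n\asymp 1$ (e.g.\ $q_n=1$, singleton or bounded-size blocks) and a coloring radius $r=r_n$. Lemma~\ref{lem:coloring.exponential} then gives $\max_m |B_m|\lesssim 1$, $K_{r_n}(\mathcal P_{M_n})\lesssim q_n^2 e^{c r_n}\lesssim e^{c r_n}$, and $|S_{B_m}(\rho)|/|B_m|\lesssim \min\{e^{c\rho},q_n\}\lesssim 1$. Substituting these three quantities into the three terms of Theorem~\ref{thm:maximal}, with the proper $r_n$-coloring attaining $K_{r_n}$, produces the stated bound.

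The key steps, in order, are as follows. First, for the variance term, I bound $\Gamma_p(B_m)\le \Gamma_p^{\mathrm{sum}}(B_m)$. Since $B_m$ has bounded cardinality, the set $R(B_m)$ has boundedly many elements. Using $\alpha_{\mathcal F,B}\le 1$ (and the convention $\alpha(0)=1$), this gives
\[
\Gamma^{\mathrm{sum}}_p(B_m)\le \sqrt{\textstyle\sum_{\rho\in R(B_m)} |S_{B_m}(\rho)|/|B_m|}=\sqrt{|B_m|}\lesssim 1,
\]
uniformly in $p\in[1,\infty]$, where the equality uses $\sum_\rho |S_B(\rho)|=|B|^2$. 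Hence
\[
\sqrt{K_{r_n}}\max_m\Gamma_p(B_m)\lesssim \sqrt{e^{c r_n}}=e^{c r_n/2}.
\]
Second, the boundedness term becomes $K_{r_n}\max_m|B_m|/\sqrt n\lesssim e^{c r_n}/\sqrt n$. Third, the $\tau$ term is unchanged: it is $\sqrt n\,\bar\tau_{\operatorname{cone}\mathcal F}(\mathcal P_{M_n},\mathcal C_{r_n}[\mathcal P_{M_n}])$, which the statement abbreviates as $\bar\tau_{\operatorname{cone}\mathcal F}(r_n)$. Collecting the three pieces gives the display.

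The main obstacle is making sure the constants implicit in Lemma~\ref{lem:coloring.exponential} do not depend on $n$ or $r_n$ once $q_n\asymp1$ is fixed. In particular, the factor $q^2$ in the chromatic bound must stay bounded. If the lemma's partition is built via a greedy bounded-size clustering, I would check that choosing $q_n=1$ (singletons) is admissible. In that case the chromatic claim reduces directly to the greedy coloring bound $K_{r}\le 1+\max_i |B(i,r)|\lesssim \sum_{s\le r} e^{cs}\lesssim e^{cr}$ from the shell-growth assumption, and I would verify this bound directly as a fallback. With singletons the variance bound is also trivial, since $\Gamma_p(\{i\})=1$.
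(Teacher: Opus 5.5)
Your proposal is correct and follows the paper's proof. Like the paper, you specialize Theorem~\ref{thm:maximal} to the partition of Lemma~\ref{lem:coloring.exponential} with $q_n\asymp 1$ and absorb the powers of $q_n$ coming from $K_{r_n}\lesssim q_n^2e^{cr_n}$ into the constant. The one difference is how you get $\max_m\Gamma_p(B_m)\lesssim 1$: your bound $\Gamma_p(B_m)\le\Gamma_p^{\mathrm{sum}}(B_m)\le\sqrt{|B_m|}$, via $\sum_{\rho}|S_{B_m}(\rho)|=|B_m|^2$, is slightly cleaner than the paper's shell-count and diameter argument.
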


\begin{proof}
	See Appendix \ref{app:geometry}.
\end{proof}

As oppose to the polynomial growth case, $q_{n}$ is a nuisance as it increases the chromatic number. Thus, it is set to an uniform constant. In addition, from the first term of the RHS one can see that $r_{n}$ can grow at most at logarithmic, and at a specific, rate to at least have $e^{c r_{n}/2} = o(\sqrt{n})$.

\paragraph{Directed dyadic graphs}
Consider directed dyadic data, such as friendship nominations, trade or migration flows, input--output linkages, and interbank exposures. Let
\begin{align}\label{eqn:Z.Fform}
	N_n
	:=
	\left\{
	(i,j):
	i,j\in[n],\ i\neq j
	\right\},
	\qquad
	Z_{ij}
	=
	F(\mu_i,\mu_j,\varepsilon_{ij}),
\end{align}
where $(\mu_i)_{i\in[n]}$ and $(\varepsilon_{ij})_{(i,j)\in N_n}$ are mutually independent collections of IID random variables. The natural dependency graph $\mathfrak G_n$ connects two dyads whenever they share an endpoint:
\[
	(i,j)\sim(k,\ell)
	\quad\Longleftrightarrow\quad
	\{i,j\}\cap\{k,\ell\}\neq\varnothing.
\]
Thus, distinct dyads are at distance one if they share an endpoint and at distance two otherwise.

The next lemma summarizes the relevant geometry and dependence.

\begin{lemma}
\label{lem:directed-dyads-coloring-tau}
The following claims hold:
\begin{enumerate}

	\item
	For every $(i,j)\in N_n$, $|S((i,j),0)|
		=
		1$, $|S((i,j),1)|
		\asymp
		n$, and 
		$|S((i,j),2)|
		\asymp
		n^2$. 

	\item
	$\tau_{\operatorname{cone}\mathcal F}(r,q)=0$ for every $r\geq2$.
\end{enumerate}
\end{lemma}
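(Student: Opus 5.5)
The plan is to prove both claims directly from the representation \eqref{eqn:Z.Fform} and the adjacency rule defining $\mathfrak G_n$. I read $S((i,j),r)$ as the shell $\{(k,\ell)\in N_n: d_n((i,j),(k,\ell))=r\}$ around the dyad $(i,j)$.

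\textbf{Step 1: distances in $\mathfrak G_n$ are $0$, $1$ or $2$.} First I show that $\mathfrak G_n$ has diameter at most $2$ once $n\ge 4$. Take distinct dyads $(i,j)$ and $(k,\ell)$ with $\{i,j\}\cap\{k,\ell\}=\varnothing$. The dyad $(i,k)$ lies in $N_n$ because $i\neq k$. It shares the endpoint $i$ with $(i,j)$ and the endpoint $k$ with $(k,\ell)$. Hence $(i,j)\sim(i,k)\sim(k,\ell)$, and since the two dyads are not adjacent, $d_n=2$ exactly. So every pair of dyads is at distance $0$, $1$ or $2$, and the three shells partition $N_n$.

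\textbf{Step 2: counting the shells.} The shell at distance $0$ is $\{(i,j)\}$, so $|S((i,j),0)|=1$. For distance $1$, I count dyads with at least one endpoint in $\{i,j\}$ by inclusion--exclusion. There are $2(n-1)$ ordered dyads containing $i$ and $2(n-1)$ containing $j$. Exactly two dyads, $(i,j)$ and $(j,i)$, contain both. This gives $4(n-1)-2$ dyads, and removing $(i,j)$ itself leaves $|S((i,j),1)|=4n-7\asymp n$. By Step 1 the distance-$2$ shell is the complement, so
\[
|S((i,j),2)| = n(n-1)-(4n-6) = n^2-5n+6 = (n-2)(n-3)\asymp n^2,
\]
where the order statement holds for $n\ge 4$ (small $n$ is absorbed into constants).

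\textbf{Step 3: $\tau_{\operatorname{cone}\mathcal F}(r,q)=0$ for $r\ge 2$.} Let $A,B\subseteq N_n$ satisfy $d_n(A,B)\ge 2$. Then no dyad in $A$ shares an endpoint with a dyad in $B$. Hence the endpoint sets $V(A):=\bigcup_{(k,\ell)\in A}\{k,\ell\}$ and $V(B)$ are disjoint, and $A\cap B=\varnothing$.

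By \eqref{eqn:Z.Fform}, $\sigma(Z_A)\subseteq\sigma(\mu_k,k\in V(A);\ \varepsilon_{k\ell},(k,\ell)\in A)$, and similarly $Z_B$ is measurable with respect to $\sigma(\mu_k,k\in V(B);\ \varepsilon_{k\ell},(k,\ell)\in B)$. These two collections of primitives are disjoint subfamilies of a mutually independent family, so $Z_B\indep\sigma(Z_A)$.

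It follows that $\mathbb E[g(Z_B)\mid\sigma(Z_A)]=\mathbb E[g(Z_B)]$ almost surely for every $g$ in the relevant Lipschitz class. Therefore the coefficient in \eqref{eq:tau-generic} is zero. Equivalently, in the coupling form \eqref{eqn:tau.coupling.main.2}, the choice $Z_B'=Z_B$ is admissible and has zero cost. Taking the supremum over all such $A$ and $B$ with $|B|\le q$ gives the claim.

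\textbf{Main obstacle.} There is little real difficulty here. The points needing care are the exact inclusion--exclusion bookkeeping in Step 2, including not double counting the reversed dyad $(j,i)$, and confirming in Step 1 that disjoint dyads are always at distance exactly $2$ rather than larger. The moment condition needed for $\tau$ to be well defined is implicitly assumed, as elsewhere in the paper.
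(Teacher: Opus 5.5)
Your proof is correct and follows essentially the same route as the paper: the connecting dyad $(i,k)$ shows the diameter is two, inclusion--exclusion gives the distance-one shell with the distance-two shell as its complement, and independence of $Z_A$ and $Z_B$ comes from disjointness of the underlying primitives $(\mu,\varepsilon)$. Two refinements are worth noting, though neither changes the conclusion: your count $|S((i,j),1)|=4n-7$, which excludes $(i,j)$ itself, is slightly more careful than the paper's $4n-6$, and your explicit description of the primitive families generating $\sigma(Z_A)$ and $\sigma(Z_B)$ tightens the paper's informal independence step.
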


\begin{proof}
See Appendix~\ref{app:geometry}.
\end{proof}

Unlike time series, dyadic graphs have a trivial long-range dependence coefficient but dense local geometry, reflected in their shell profile and coloring number. By Theorem~\ref{thm:maximal},

\begin{proposition}
\label{prop:maximal-directed-dyads}
Let $(Z_{ij})_{(i,j)\in N_n}$ be defined as above. For every $p>1$ and $a,b\geq1$,
\begin{align}
	\left\|
		\sup_{f\in\mathcal F}
		\mathbb G_n(f)
	\right\|_{L^1(P)}
	\lesssim
	\sqrt n\,
	\gamma_{2,a}
	\left(
		\mathcal F,
		\|\cdot\|_{L^{2p}(P)}
	\right)
	+
	\frac{1}{\sqrt n}
	\gamma_{1,b}
	\left(
		\mathcal F,
		\|\cdot\|_\infty
	\right).
	\label{eq:maximal-directed-dyads}
\end{align}
\end{proposition}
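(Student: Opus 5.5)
The plan is to apply Theorem~\ref{thm:maximal} directly, with singleton blocks and coloring radius $r=1$. Write $N:=|N_n|=n(n-1)$ for the number of dyads; this $N$ is the sample size that enters Theorem~\ref{thm:maximal}. Take $\mathcal P_N=\{\{(i,j)\}:(i,j)\in N_n\}$. Two distinct singletons are at distance $>1$ exactly when the dyads share no endpoint. A proper $1$-coloring is therefore a proper coloring of the conflict graph on dyads, where $(i,j)$ and $(k,\ell)$ conflict iff $\{i,j\}\cap\{k,\ell\}\neq\varnothing$. By Lemma~\ref{lem:directed-dyads-coloring-tau}(1), every vertex of this graph has degree $\asymp n$, so the greedy bound (Appendix~\ref{app:graph}) gives $K_1\lesssim n$. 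One can also build an explicit coloring from a proper edge coloring of the complete graph on $[n]$ with $\le n$ colors, splitting each color into the orientations $(i,j)$ and $(j,i)$. Each color class is then a family of dyads with pairwise disjoint endpoints, of size $\asymp n$.

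\emph{Step 1 (the three ingredients).} First, for a singleton block $B=\{(i,j)\}$ we have $R(B)=\{0\}$ and $|S_B(0)|/|B|=1$, with $\alpha_{\mathcal F,B}(0)=1$. Hence $\Gamma_p(B)\lesssim 1$ for every $p>1$. Since the $Z_{ij}$ are identically distributed, $\max_{i}\|\cdot\|_{L^{2p}(P_{Z_i})}=\|\cdot\|_{L^{2p}(P)}$. Second, within each color class every predecessor set lies at distance $2$ from the current block. Since $\operatorname{cone}\mathcal F\subseteq\operatorname{span}\mathcal F$, Lemma~\ref{lem:directed-dyads-coloring-tau}(2) together with \eqref{eq:tau-coefficient-comparison} gives $\bar\tau_{\operatorname{cone}\mathcal F}(\mathcal P_N,\mathcal C_1)\le\tau_{\operatorname{cone}\mathcal F}(2,1)=0$. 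Concretely, $Z_{ij}=F(\mu_i,\mu_j,\varepsilon_{ij})$ is independent of all dyads with disjoint endpoints. Third, the chromatic factor gives $\sqrt{K_1}\lesssim\sqrt n$.

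\emph{Step 2 (plug into Theorem~\ref{thm:maximal}).} Substituting these ingredients, the first term becomes $\sqrt n\,\gamma_{2,a}(\mathcal F,\|\cdot\|_{L^{2p}(P)})$ and the coupling term vanishes. The remaining term is $K_1\max_m|B_m|/\sqrt N\cdot\gamma_{1,b}\asymp (n/n)\,\gamma_{1,b}$.

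\emph{Step 3 (main obstacle: the $\gamma_{1,b}$ coefficient).} The direct plug-in gives an order-one coefficient on $\gamma_{1,b}(\mathcal F,\|\cdot\|_\infty)$, not the stated $n^{-1/2}$. The natural refinements do not obviously close this gap, and it is the step I expect to be hardest. One refinement is to reopen the proof of Theorem~\ref{thm:maximal} and apply Bernstein (Lemma~\ref{lem:bern}) class by class. Each color class is an independent family of $\asymp n$ dyads, so the bounded part contributes $N^{-1/2}\sum_k b\log(\cdot)\asymp K_1 b/n$, which is again of order one.

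A second refinement is a Hoeffding/H\'ajek decomposition $f(Z_{ij})-\mathbb E f=g_1(\mu_i)+g_2(\mu_j)+h(\mu_i,\mu_j,\varepsilon_{ij})$. The projection terms are i.i.d.\ sums over the $n$ nodes. Each node appears in $\asymp n$ dyads, so these terms scale as $(n/\sqrt N)\sqrt n\,(\gamma_2+\gamma_1/\sqrt n)=\sqrt n\,\gamma_2+\gamma_1$. The degenerate part $h$ has uncorrelated summands, and applying the coloring argument to it yields at most $\gamma_2+\gamma_1$. So this route also lands on an order-one coefficient.

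My plan is therefore to carry out Steps 1--2, which establish \eqref{eq:maximal-directed-dyads} with $\gamma_{1,b}$ multiplied by a constant. I would then check whether the proposition's intended normalization measures the boundedness term per node rather than per dyad. If it does, the coefficient $K_1\max|B_m|/\sqrt N$ is read with the $n$ nodes as the reference sample size, and the $n^{-1/2}$ factor requires exactly that reinterpretation. Otherwise the statement should be read with an order-one coefficient, and I would not claim the $n^{-1/2}$ rate from this argument.
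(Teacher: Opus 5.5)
Your Steps 1--2 are correct and establish $\sqrt n\,\gamma_{2,a}(\mathcal F,\|\cdot\|_{L^{2p}(P)})+\gamma_{1,b}(\mathcal F,\|\cdot\|_\infty)$. They use singleton blocks, a proper $1$-coloring into $K_1\lesssim n$ endpoint-disjoint classes, $\Gamma_p(\{(i,j)\})=1$, a vanishing coupling term, and Theorem~\ref{thm:maximal} with sample size $|N_n|=n(n-1)$.

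This is exactly where the paper's own proof ends. Its last display has $\gamma_{1,b}$ with an order-one coefficient and is declared ``as desired''. The factor $n^{-1/2}$ in the displayed statement is therefore not delivered by the paper's argument either, and you are right not to claim it.

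Theorem~\ref{thm:maximal} cannot produce that factor along these lines. Take any coloring with $r\ge1$ (with $r=0$, predecessors may share endpoints with the current block and the coupling term need not vanish). Blocks in the same class then have pairwise disjoint endpoint sets $V_m$, and $|B_m|\le |V_m|^2$. Hence $\sum_{m\in C_k}|B_m|\le \max_m|B_m|^{1/2}\sum_{m\in C_k}|V_m|\le n\max_m|B_m|^{1/2}$. Summing over classes gives $n(n-1)\le K_r\,n\,\max_m|B_m|^{1/2}$, so $K_r\max_m|B_m|/\sqrt{n(n-1)}\ge\sqrt{(n-1)/n}$.

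Your ``per-node'' reinterpretation does not rescue the factor. Putting $\sqrt n$ instead of $\sqrt{n(n-1)}$ in the denominator of $K_1\max_m|B_m|/\sqrt{\cdot}$ gives order $\sqrt n$, not $n^{-1/2}$. A factor $n^{-1/2}$ on $\gamma_{1,b}$ appears only if the whole inequality is divided by $\sqrt n$, i.e.\ the process is normalized by the effective sample size. That also turns $\sqrt n\,\gamma_{2,a}$ into $\gamma_{2,a}$, which is the ``IID with $n$ observations'' form the paper describes after the proposition. You should drop that paragraph and state the result with the order-one coefficient.

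Your route also differs from the paper's, to your advantage. The paper takes as blocks the $2(n-1)$ directed matchings $B_s^{\pm}$ from a $1$-factorization of the complete graph on $[n]$, each of size $n/2$. It puts every block in its own color class and invokes Lemma~\ref{lem:gamma-directed-matching-blocks} for $\Gamma_p(B_m)=1$. With that choice, $K_r\max_m|B_m|/\sqrt{|N_n|}=n(n-1)/\sqrt{n(n-1)}=\sqrt{n(n-1)}$. The paper writes $n(n-1)/n^2$, which is an arithmetic slip; carried out correctly, that route gives an order-$n$ coefficient on $\gamma_{1,b}$.

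You use the same matchings as color classes of singleton blocks instead. This keeps $\max_m|B_m|=1$ with $K_1\asymp n$, so the Bernstein term is of order one. $\Gamma_p=1$ is immediate without the matching-block lemma, and the coupling term still vanishes because endpoint-disjoint dyads are independent. Your argument is the one that actually supports the bound the paper arrives at.
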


\begin{proof}
See Appendix~\ref{app:geometry}.
\end{proof}

Thus, despite having $n(n-1)$ observations, directed dyadic data have an analogous bound to an IID setting with sample size of order $n$. 

\section{Glivenko--Cantelli Results and Effective Sample Size}
\label{sec:gc-effective-sample-size}

The purpose of this section is twofold. First, we employ
Theorem~\ref{thm:maximal} to obtain Glivenko--Cantelli results over graph-dependent data. Second,
we provide convergence rates and interpret them in terms of effective sample size. The main
message is that, contrary to IID data, there is no universal Glivenko--Cantelli rate. The
rate is determined jointly by graph geometry, local and long-range dependence.

\subsection{A Glivenko--Cantelli result for  graph-dependent data}

For any finite partition \(T_{L}\) of \(\mathcal F\), with $|T_{L}| \leq 2^{2^{L}}$, let \(\Pi[T_{L}] : = (\pi_C)_{C \in T_{L}}  \) be a representative class: For each $C \in T_{L}$, the element is such that $\pi_{C}$ for all $f \in C$, $\pi_{C} : = \pi_{L} f$. Also, let $	\mathcal D[T_{L}] := \{D_C \colon C \in T_{L}\}$ be the class of diameter functions, $z \mapsto D_C(z)
	:=
	\sup_{f_1,f_2\in C}
	|f_1(z)-f_2(z)|,~\forall C\in T_{L}$.

We approximate $\mathcal F$ by a finite representative class and control the residual variation within each cell. The maximal inequality is applied to the representatives, while the average cell diameter controls the approximation error.

\begin{theorem}	\label{thm:single-scale-network-gc}
	Suppose $\sup_{ f \in \mathcal F } ||f||_{\infty} < \infty$ and, for each $n$, there exists:
	\begin{itemize}
		\item A partition
		\(\mathcal P_n=\{B_1,\ldots,B_{M_n}\}\) of \(N_n\), with
		\(\max_m |B_m|\le q_n\), and an associated \(r_n\)-coloring,
		\item A  finite partition of \(\mathcal F\), \(T_{L_{n}}\)
	\end{itemize}
	such that for some $a$ and $b$,
	{\small{\begin{align}
		&\left( \frac{
			\sqrt{ K_{r_{n}}}	}{\sqrt n}  \max_{m\in[M_n]}\Gamma_p(B_m)    \right)   2^{\frac{\lceil a L_{n} \rceil}{2}}  \to 0,~\left(  \frac{K_{r_{n}}  q_n}{n}
		+	\bar{\tau}_{\operatorname{cone} \Pi[T_{L_{n}}]  }
		\left( 	\mathcal P_{n}, 	\mathcal C_{r_{n}}[\mathcal P_{n} ] \right)    \right)   2^{\lceil b L_{n} \rceil}   \to 0, 	\label{eq:ssgc-net-gamma1} \\
		&~and \quad \frac{1}{n}
		\sum_{i\in N_n}
		\mathbb E 	\left[  \sup_{C\in T_{L_n}} D_C(Z_i) \right] \to 0,
		\label{eq:ssgc-pop-diameter}
	\end{align}}}
	as $n \to \infty$. Then \(  (\mathcal F,\mathfrak{G}_n ) \) is graph Glivenko--Cantelli, i.e.,
	\begin{align}
		\sup_{f\in\mathcal F}
		\left| \frac{1}{n} \sum_{i \in N_{n}} \{ f(Z_{i}) - \mathbb E[f(Z_{i})]   \}  \right|
		=
		o_{\mathbb P}(1).
	\end{align}
\end{theorem}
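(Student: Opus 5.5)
The plan is to split the supremum into a finite-class part and a within-cell approximation part, and to bound each in $L^1(\mathbb P)$. Markov's inequality then turns the $L^1$ bounds into the stated $o_{\mathbb P}(1)$.

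For every $f\in\mathcal F$, let $C$ be the cell of $T_{L_n}$ containing $f$, and put $\pi_{L_n}f=\pi_C$. Then
\[
\frac1n\sum_{i}\{f(Z_i)-\mathbb E f(Z_i)\}
=\frac1n\sum_i\{\pi_{L_n}f(Z_i)-\mathbb E\pi_{L_n}f(Z_i)\}
+\frac1n\sum_i\{(f-\pi_{L_n}f)(Z_i)-\mathbb E(f-\pi_{L_n}f)(Z_i)\}.
\]
Since $|f-\pi_{L_n}f|(z)\le D_C(z)\le \sup_{C'\in T_{L_n}}D_{C'}(z)$ pointwise, the second term is bounded, uniformly in $f$, by
\[
\frac1n\sum_i\sup_{C}D_C(Z_i)+\frac1n\sum_i\mathbb E\sup_C D_C(Z_i).
\]
This has expectation $2n^{-1}\sum_i\mathbb E[\sup_C D_C(Z_i)]$, which tends to zero by \eqref{eq:ssgc-pop-diameter}. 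No dependence structure is needed for this term.

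For the first term, apply Theorem~\ref{thm:maximal} to the finite class $\Pi[T_{L_n}]\cup\{0\}$, which is allowed because the theorem only requires $0$ in the class. Take the given partition $\mathcal P_n$ and $r_n$-coloring, and divide the bound by $\sqrt n$. To handle the two-sided absolute value, apply the theorem to $\Pi\cup(-\Pi)\cup\{0\}$ instead; this only doubles the cardinality. The cone of this symmetrized class is the same as $\operatorname{cone}\Pi[T_{L_n}]$ when we read cone as including signs. If the paper's cone is only nonnegative, the $\tau$ term must be handled slightly more carefully, for instance by noting that $d_{\mathcal B,m}$ is invariant under $f\mapsto -f$.

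The key step is bounding the chaining functionals of a finite class of cardinality $J\le 2\cdot 2^{2^{L_n}}+1$. I would use the trivial admissible sequence that is the trivial partition up to level $l^\ast$ and then the partition into singletons. The singleton partition is admissible once $2^{2^{l/a}}\ge J$, that is, for $l\ge l^\ast:=\lceil a(L_n+1)\rceil$. After that level the diameters vanish, so
\[
\gamma_{2,a}(\Pi,\|\cdot\|)\lesssim \sum_{l\le l^\ast}2^{l/2}\,\mathrm{diam}\lesssim 2^{\lceil aL_n\rceil/2}\sup_{f}\|f\|,
\qquad
\gamma_{1,b}(\Pi,\|\cdot\|_\infty)\lesssim 2^{\lceil bL_n\rceil}\sup_f\|f\|_\infty .
\]
The norms on the right are bounded by $2\sup_{\mathcal F}\|f\|_\infty<\infty$, and the $L^{2p}$ norm is dominated by the sup norm. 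Substituting into Theorem~\ref{thm:maximal} and dividing by $\sqrt n$, the first-term $L^1$ bound is of order
\[
\frac{\sqrt{K_{r_n}}\max_m\Gamma_p(B_m)}{\sqrt n}\,2^{\lceil aL_n\rceil/2}
+\Big(\frac{K_{r_n}q_n}{n}+\bar\tau_{\operatorname{cone}\Pi[T_{L_n}]}\Big)2^{\lceil bL_n\rceil},
\]
where $\max_m|B_m|\le q_n$ has been used. Both pieces tend to zero by \eqref{eq:ssgc-net-gamma1}.

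The main obstacle I expect is bookkeeping in the chaining step rather than anything conceptual. First, the admissibility indices (the $2^{2^{l/r}}$ constraint with order $a$ or $b$) must produce exactly the exponents $2^{\lceil aL_n\rceil/2}$ and $2^{\lceil bL_n\rceil}$, with the additive constants from symmetrization and from adding $0$ absorbed into $\lesssim$. Second, the $\tau$ coefficient in Theorem~\ref{thm:maximal}, which is computed for the cone of the class the theorem is applied to, must match the one in the hypothesis. Once these are settled, the two $L^1$ bounds add to a quantity tending to zero, and Markov's inequality gives the graph Glivenko--Cantelli property.
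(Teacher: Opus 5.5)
Your proposal follows the paper's proof step for step. It uses the same split \eqref{eq:ssgc-basic-decomp} into representatives plus a within-cell remainder, and bounds the remainder by $2n^{-1}\sum_i\mathbb E[\sup_C D_C(Z_i)]$. It then applies Theorem~\ref{thm:maximal} to the finite class of representatives and uses the trivial-then-singletons admissible sequence, which is Lemma~\ref{lem:finite-class-gamma-bounds}, to get $2^{\lceil aL_n\rceil/2}$ and $2^{\lceil bL_n\rceil}$, finishing with Markov's inequality. Your care about $0\in\Pi[T_{L_n}]$ and about the absolute value goes beyond the paper, which applies Theorem~\ref{thm:maximal} to $\Pi_{L_n}$ directly even though that theorem is stated for the one-sided $\sup_f\mathbb G_n(f)$. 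Your handling of the $\tau$ term is correct, since $d_{\mathcal B,m}$ is unchanged by $f\mapsto -f$ and by adjoining $0$.

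The one step that fails as written is the symmetrization $\Pi\cup(-\Pi)\cup\{0\}$, and the trouble is the $\Gamma_p$ term, not the $\tau$ term. For $p\in(1,\infty]$, the $\Gamma_p(B_m)$ produced by Theorem~\ref{thm:maximal} is built from $\alpha_{\mathcal G,B}(\rho)$. This is a supremum of $\alpha(h(Z_i),h(Z_j))$ over $h\in\mathcal G-\mathcal G$, where $\mathcal G$ is the class the theorem is applied to; it enters through Lemma~\ref{lem:VarBlock.upper.0} applied to the chaining increments, which lie in $\mathcal G-\mathcal G$. For your $\mathcal G$, the set $\mathcal G-\mathcal G$ contains sums $\pi+\pi'$ with $\pi,\pi'\in\Pi$, and these are not in $\mathcal F-\mathcal F$. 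This function-by-function $\alpha$ is not controlled under sums: $f(Z_i)$ and $f(Z_j)$ can be independent for every $f\in\mathcal F-\mathcal F$ while $(f+g)(Z_i)$ and $(f+g)(Z_j)$ are dependent. So the $\Gamma_p$ you obtain can exceed the one controlled by \eqref{eq:ssgc-net-gamma1}. For $p=1$ there is no issue, because $\Gamma_1$ does not involve $\alpha$.

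The repair is one line. Bound $\max_{\pi}|\mathbb G_n(\pi)|\le\sup_{\pi\in\Pi\cup\{0\}}\mathbb G_n(\pi)+\sup_{\pi\in(-\Pi)\cup\{0\}}\mathbb G_n(\pi)$ and apply Theorem~\ref{thm:maximal} to each one-signed class separately. Since $0\in\mathcal F$ and $\mathcal F-\mathcal F$ is symmetric, differences within either class lie in $\mathcal F-\mathcal F$. Hence their $\alpha$-coefficients, and therefore $\Gamma_p$, are dominated by those for $\mathcal F$, while the $\tau$ term and the chaining functionals are unchanged by the sign flip.

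Alternatively, the chaining argument behind Theorem~\ref{thm:maximal} bounds sums of absolute values of increments. With $0$ as the level-$0$ representative it therefore already controls $\mathbb E\sup_f|\mathbb G_n(f)|$, which is what the paper's proof tacitly uses.
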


\begin{proof}
	See Appendix \ref{sec:proofGC}.
\end{proof}

Theorem \ref{thm:single-scale-network-gc} provides sufficient conditions over the class of function $\mathcal{F}$, as well as over the graph $\mathfrak G_{n}$, to support a uniform law of large numbers. The conditions require that the partition \( T_{L_n}\) balances two requirements: The class
\(\Pi[ T_{L_n}]\) must be sufficiently small for the stochastic term to vanish, while its
cells must be sufficiently fine for the mean cell oscillation to vanish.

One way to construct such a partition is to take $T_{L}$ as $(B_{||.||_{\infty}}(f_{j},\epsilon))_{j=[M]}$ where $\{f_{1},...,f_{M}\}$ is the $\epsilon$-packing of $\mathcal{F}$ under $||.||_{\infty}$ and $M : = M(\epsilon,\mathcal F, ||.||_{\infty})$ is the corresponding packing number. Under this choice, $|T_{L}| = M$ and $||D_{C}||_{\infty} \leq 2 \epsilon$, therefore the conditions hold provided that there exists a vanishing sequence $(\epsilon_{n})_{n}$ such that 
\begin{align*}
		& \sqrt{ K_{r_{n}}}	  \max_{m\in[M_n]}\Gamma_p(B_m)  \sqrt{ \frac{ \log  M(\epsilon_{n},\mathcal F, ||.||_{\infty})}{n} } \to 0\\
		~and~	& \left(  K_{r_{n}} q_n 
	+ n 	\bar{\tau}_{\operatorname{cone} \Pi[T_{L_{n}}] }
	\left( 	\mathcal P_{n}, 	\mathcal C_{r_{n}}[\mathcal P_{n} ] \right)   \right)  \frac{\log  M(\epsilon_{n},\mathcal F, ||.||_{\infty})}{n}  \to 0.
\end{align*}
These conditions are extensions of the ``classical'' Glivenko-Cantelli restriction, $ \frac{\log  M(\epsilon_{n},\mathcal F, ||.||_{\infty})}{n}  \to 0$ (cf. \cite{VdV-W1996}) for IID data, to graph-dependent data. 

On the other hand, if the Talagrand's measures of complexity are finite, one can verify the conditions in the theorem by simply choosing $T_{L}$ as member of the (approximate) minimizing admissible sequence. Under this choice $\sup_{C\in T_{L} } ||D_{C}||_{\infty} \lesssim \gamma_{1,b}(\mathcal F , || . ||_{\infty})/2^{L}$ and condition in \eqref{eq:ssgc-pop-diameter} readily follows for any diverging $L_{n}$, provided that the graph $\mathfrak G_{n}$ admits a partition and coloring such that $\frac{\sqrt{ K_{r_{n}} }	}{\sqrt n}  \max_{m\in[M_n]}\Gamma_p(B_m)   \to 0$ and $\frac{K_{r_{n}} q_n}{n} + 	\bar{\tau}_{\operatorname{cone} \Pi[T_{L_{n}}] }
\left( 	\mathcal P_{n}, 	\mathcal C_{r_{n}}[\mathcal P_{n} ] \right)  \to 0$.

\subsection{Effective sample size} Theorems \ref{thm:maximal} and \ref{thm:single-scale-network-gc}  provide more than a maximal inequality and a Glivenko--Cantelli result. They also provide a measure of the effective amount of independent information contained in a graph-dependent sample. In IID settings, the effective sample size is the cardinality of the
sample. For graph-dependent data, the rate need not be \(n^{-1/2}\). The
graph may contain highly dependent local neighborhoods, and the number of
approximately independent pieces of information may be much smaller than
\(n\).

In this section we assume that there exists $a,b \ge 1$ and $p \in [1,\infty]$ such that  $\gamma_{2,a}(\mathcal F , \max_{i \in N_{n}} || . ||_{L^{2p}(P_{Z_{i}})})$ and $ \gamma_{1,b}(\mathcal F , || . ||_{\infty})$ are both finite. Theorem \ref{thm:maximal} suggests an \emph{effective sample size} given by 
\begin{align}
	\mathfrak{n}_{\mathrm{eff}}(\mathcal{F},\mathfrak{G}_n) : = \left( 	\frac{
		\sqrt{K_{r_{n}}}\max_{m \in [M_{n}]}\Gamma_p(B_m) }{\sqrt n}  + \frac{K_{r_{n}}  q_{n}}{n}  + 	\bar{\tau}_{\operatorname{cone} \mathcal F }
		\left( 	\mathcal P_{n}, 	\mathcal C_{r_{n}}[\mathcal P_{n} ] \right)   \right)^{-2}
\end{align}
for the partition and coloring ($\mathcal P_{n}  : = \{B_{1},...,B_{M_{n}}\}, 	\mathcal C_{r_{n}}[\mathcal P_{n} ]  $) that minimize the quantity in the parenthesis.

In the IID case, this expression is of order $n$. For time series data, if the mixing coefficient decays fast enough with $r=q$, then the effective sample size coincides with the IID (see, for example, \cite{DMR1995,yu1994rates} for $\beta$-mixing), however, without this restriction, \cite{Pouzo2026} shows that the sample size can diverge much slower than the actual one.\footnote{The word ``coincides'' in the last sentence should be qualified. It is true that \emph{asymptotically} the implied effective sample size and the actual sample size coincide, but in finite sample the former can be much smaller --- \cite{Pouzo2026} for an example and a more thorough discussion.}

For graph-dependent data this quantity is more nuanced: it depends on the graph structure and dependence structure through $\{ K_{r_{n}},\Gamma_p,q_{n}, \bar{\tau}_{ \operatorname{cone} \mathcal F } \}$. Thus, contrary to IID or time series data, large graphs (i.e., large number of observations) need not imply a large effective sample size. A graph
with many nodes can still have a small effective sample size if neighborhoods expand rapidly, if many colors are required to separate blocks, or if dependence decays slowly.

We illustrate how graph geometry and mixing determine the effective sample size.

\paragraph{Polynomial-growth graphs}
For uniformly doubling graphs with polynomial growth of dimension $d$, Lemma~\ref{lem:PolyGraph.color} provides, for each $q_n$, a partition satisfying $\max_m|B_m|
	\lesssim
	q_n$, $\operatorname{diam}(B_m)
	\lesssim
	q_n^{1/d}$, and $K_{q_n^{1/d}}
	=
	O(1)$. Hence, Proposition~\ref{pro:maximal.DoublePoly} implies
\begin{align}
	\mathfrak n_{\mathrm{eff}}(\mathcal F,\mathfrak G_n)
	\asymp
	\left\{
	\frac{\Lambda_{p,d}(q_n)}{\sqrt n}
	+
	\frac{q_n}{n}
	+
	\bar\tau_{\operatorname{cone}\mathcal F}
	\left(
	\mathcal P_n,
	\mathcal C_{q_n^{1/d}}[\mathcal P_n]
	\right)
	\right\}^{-2}.
	\label{eq:polynomial-growth-rate}
\end{align}
Polynomial growth prevents the number of nearby blocks from exploding
too quickly, so the graph can be decomposed into a bounded number of
approximately independent color classes. However, the slower the coupling term decays, the smaller the effective sample size is. For instance, if
$\bar\tau_{\operatorname{cone}\mathcal F}
(\mathcal P_n,\mathcal C_r[\mathcal P_n])
\lesssim r^{-\beta}$, then
\[
	\mathfrak n_{\mathrm{eff}}(\mathcal F,\mathfrak G_n)
	\asymp
	\left\{
	\frac{\Lambda_{p,d}(q_n)}{\sqrt n}
	+
	\frac{q_n}{n}
	+
	q_n^{-\beta/d}
	\right\}^{-2}.
\]
Thus, if $\Lambda_{p,d}(q_n)=O(1)$ and $\beta/d\geq1$, a suitable choice of $q_n$ yields the IID effective sample size. More generally, the effective sample size diverges whenever there exists $q_n\to\infty$ such that
$q_n=o(n)$ and $\Lambda_{p,d}(q_n)/\sqrt n=o(1)$, yielding the graph Glivenko--Cantelli property.

\paragraph{Exponential-growth graphs}
Proposition~\ref{pro:maximal.exponential.growth} gives
	$\mathfrak n_{\mathrm{eff}}(\mathcal F,\mathfrak G_n)
	\asymp
	\left\{
	\sqrt{\frac{e^{cr_n}}{n}}
	+
	\bar\tau_{\operatorname{cone}\mathcal F}(r_n)
	\right\}^{-2}$.
If $\bar\tau_{\operatorname{cone}\mathcal F}(r)\lesssim e^{-br}$, choosing
$r_n\asymp\log(n)/(2b+c)$ yields $\mathfrak n_{\mathrm{eff}}(\mathcal F,\mathfrak G_n)
	\asymp
	n^{2b/(2b+c)}$. 
If instead
$\bar\tau_{\operatorname{cone}\mathcal F}(r)\lesssim r^{-b}$, an asymptotically balancing choice gives $\mathfrak n_{\mathrm{eff}}(\mathcal F,\mathfrak G_n)
	\asymp
	(\log n)^{2b}$. Thus, even though exponential graph growth can sustain a Glivenko--Cantelli property, it substantially reduce the effective sample size.

\paragraph{Directed dyadic graphs}
By Lemma~\ref{lem:directed-dyads-coloring-tau}, these graphs have diameter two and
$\tau_{\operatorname{cone}\mathcal F}(r,q)=0$ for $r\geq2$, but their local geometry is dense. Proposition~\ref{prop:maximal-directed-dyads} therefore gives $\mathfrak n_{\mathrm{eff}}(\mathcal F,\mathfrak G_n)
	\asymp
	\frac{|N_n|}{n}
	\asymp
	n$, 
because $|N_n|=n(n-1)$. Thus, although the number of dyads is of order $n^2$, their effective sample size is only of order $n$, consistent with dependence generated by $n$ latent unit-level effects.

%
%
%

\section{Examples}
\label{sec:examples}



We illustrate the results using two models. The first is a network autoregression in which, under a layered structure, shocks propagate across network links and the coupling cost decays geometrically with layer separation. The second is a nonlinear local-propagation model, encompassing network-treatment interference, in which dependence vanishes beyond a finite graph distance. For both models, we derive maximal inequalities and illustrate their use in establishing asymptotic properties of standard estimators.

\subsection{Network autoregressive model}

\paragraph{Setup} Consider a set of firms indexed by $N_n$. Let $Z_i$ denote the log output of firm $i$, and write
\begin{align}
	Z
	=
	\rho W Z+\xi,
	\qquad
	\xi
	:=
	(\xi_i)_{i\in N_n},
	\qquad
	Z
	:=
	(Z_i)_{i\in N_n}.
	\label{eqn:production.network.ar}
\end{align}
The vector $\xi$ collects firm-specific primitive shocks. In the production-network interpretation, one may write $\xi_i
	= X_i^{\top}\beta+\eta_i$,  where $X_i\in\mathbb R^p$ contains observable firm-level production shifters, such as factor prices, input availability, or demand conditions, and $\eta_i$ is an unobserved productivity or technology shock. We assume that $(\xi_i)_{i\in N_n}$ are IID across firms with $\sup_{i\in N_n}
		\mathbb E \left[
		|\xi_i|
		\right]
		\leq
		M_{\xi}
		<
		\infty$.

Although the production-network interpretation is our leading example, the same specification also covers standard spatial autoregressive and social-interaction models; see \cite{BramoulleEtAl2009,LeSagePace2009,LiuEtAl2014}. For production networks, we based our model in \cite{AcemogluCarvalhoOzdaglarTahbazSalehi2012}, who study the propagation of idiosyncratic shocks through input--output linkages.

In the production-network interpretation, $w_{ij}\neq 0$ means that firm $j$ affects firm $i$; e.g., $j$ may be an upstream supplier of intermediate inputs used by $i$, or a firm whose output enters the production technology of $i$. The graph $\mathfrak G_n = (N_{n},V_{n})$ is the undirected skeleton of this directed network:
\[
	(i,j)\in V_n
	\quad\Longleftrightarrow\quad
	w_{ij}\neq 0
	\ \text{or}\
	w_{ji}\neq 0.
\]
Thus, $\mathfrak G_n$ captures production linkages, while $W$ retains their direction and strength. Under
\begin{align}
	w_{ij}
	&\geq
	0,
	\qquad
	\sup_{i\in N_n}
	\sum_{j\in N_n}
	w_{ij}
	\leq
	\kappa
	<
	\infty,
	\qquad
	\theta
	:=
	|\rho|\kappa
	<
	1.
	\label{eqn:production.network.stability}
\end{align}
It follows that
\begin{align}
	Z
	&=
	(I-\rho W)^{-1}\xi
	=
	\sum_{s=0}^{\infty}
	\rho^sW^s\xi,
	\label{eqn:production.network.resolvent}
\end{align}
so shocks propagate along network paths, with their effects decaying geometrically. 

\paragraph{Production layers}
We impose a hierarchical structure on the production network, partitioning \(N_n\) into \(L_n\) production layers: $N_n=\bigsqcup_{\ell=1}^{L_n}\mathcal L_\ell$ with $q_{n} : = \max_{1\leq \ell \leq L_{n}} |L_{\ell}|$. 
For instance, the first layer may contain firms producing basic inputs, such as raw materials or energy; the second, firms transforming them into components; and the last, firms producing or distributing final goods.

\begin{assumption}
For every \(i\in\mathcal L_\ell\) and \(j\in\mathcal L_u\),
\begin{align}
w_{ij}\neq0
\quad\Longrightarrow\quad
u\in\{\ell-D,\ldots,\ell\}.
\label{eq:layered-local-W-D}
\end{align}
\end{assumption}

Thus, \(D\) bounds the number of layers spanned by a direct input--output linkage: a firm may use inputs from its own layer or the preceding \(D\) layers. Dependence across more distant layers can therefore arise only through sequences of intermediate supplier--customer links.

This structure provides the two ingredients needed for our maximal inequality. The layers form a natural partition requiring at most \(Dr+1\) colors at separation radius \(r\), while \eqref{eqn:production.network.resolvent} implies that the coupling error across separated layers decays geometrically at rate \(\theta^r\). Increasing \(r\) therefore raises the coloring cost at most linearly while reducing the coupling error geometrically. As shown below, choosing \(r\) of logarithmic order yields a Glivenko--Cantelli result and an explicit lower bound on the effective sample size.

\paragraph{Object of interest}

Our goal is to bound
\begin{align}
\label{eqn:NAR.object}
\mathbb E\left[
\sup_{f\in\mathcal F}
\frac{1}{n}\sum_{i\in N_n}
\left\{
f(Z_i)-\mathbb E[f(Z_i)]
\right\}
\right].
\end{align}
When \(\mathcal F\) is the normalized unit Lipschitz class, this quantity is the Wasserstein-\(1\) distance between the empirical distribution of firm outcomes and the average marginal distribution \(n^{-1}\sum_{i\in N_n}P_{Z_i}\). Alternatively, \(\mathcal F\) may represent a family of welfare functions indexed by a policy or structural parameter, in which case \eqref{eqn:NAR.object} provides uniform control of sample average welfare.

We assume that \(\mathsf Z\subset\mathbb R^{d_z}\) is compact and, for some \(s>d_z\) and \(L<\infty\), $\mathcal F\subseteq\mathcal H^s(L)
:=
\left\{
f:\mathsf Z\to\mathbb R:
\|f\|_{\mathbb C^s(\mathsf Z)}\leq L
\right\}$ . 
Then every \(f\in\mathcal F\) is \(L\)-Lipschitz and, by standard complexity bounds, for every \(p\in[1,\infty]\),
\begin{align}
\label{eqn:NAR.complexity}
\gamma_{2,a}
\left(
\mathcal F,
\max_{i\in N_n}\|\cdot\|_{L^{2p}(P_{Z_i})}
\right)
\lesssim L,
\qquad
\gamma_{1,b}
\left(
\mathcal F,\|\cdot\|_\infty
\right)
\lesssim L.
\end{align}

\paragraph{Natural partition and coloring} The production layers define the natural partition,
\begin{align}
\mathcal P_{L_n}
=
\left\{
\mathcal L_1,\ldots,\mathcal L_{L_n}
\right\}.
\label{eq.partition.blocks}
\end{align}
For $r\geq1$, let $h_r:=Dr+1$ and define the color classes $\mathcal C_r
\left[
\mathcal P_{L_n}
\right]
:=
\left\{
C_k:
k\in[h_r],~
C_k\neq\varnothing
\right\}$, 
where $C_k
:=
\left\{
m\in[L_n]:
m
\equiv
k
\pmod{h_r}
\right\} = \{ j h_{r} + k \colon j=0,1,...   \} \cap [L_{n}]$. 

Any two distinct production layers in the same class have indices differing by at least $Dr+1$. Since a single link in $W$ can span at most $D$ production layers, any path connecting such layers must contain at least $r+1$ links. Hence, $\mathcal C_r[\mathcal P_{L_n}]$ is a proper $r$-coloring of the layer partition with 
\begin{align}
K_r
\leq
\min
\left\{
Dr+1,
L_n
\right\}.
\label{eqn:auto.chormatic}
\end{align}

\paragraph{Coupling bound} We next show that the coupling error associated with the preceding partition and coloring decays geometrically:
\begin{align}
\tau^{\mathrm{max}}_{\operatorname{cone} \mathcal F}
\left(
\mathcal P_{L_n},
\mathcal C_r
\left[
\mathcal P_{L_n}
\right]
\right)
\leq
\frac{
2LM_{\xi}
}{
1-\theta
}
\theta^{r+1}.
\label{eqn:auto.tau}
\end{align}

Fix a color class $C_k$ with $m_{k,1}
<
\cdots
<
m_{k,|C_k|}$, $\ell\in[|C_k|]$, and set $m:=m_{k,\ell}$. Every predecessor of $\mathcal L_m$ in the same color class belongs to a production layer with index at most $m-h_r$. So,\footnote{With the convention that $A_{m,r}=\varnothing$ whenever $m\leq h_r$.} 
\begin{align*}
\operatorname{Pred}_{\ell}(C_k)
\subseteq
A_{m,r},~where~A_{m,r} :=
\bigcup_{u=1}^{m-h_r}
\mathcal L_u,
\end{align*}


The layered restriction on $W$ implies that, if $j\in\mathcal L_u$, then $Z_j$ is measurable with respect to primitive shocks in layers no later than $u$. Consequently, $\sigma
\left(
Z_j:
j\in
\operatorname{Pred}_{\ell}(C_k)
\right)
\subseteq
\sigma
\left(
\xi_j:
j\in
A_{m,r}
\right)$. 

We construct an independent version of $Z_{\mathcal L_m}$ by resampling the primitive shocks in $A_{m,r}$. Let $(\xi_i')_{i\in N_n}$ be an independent copy of $(\xi_i)_{i\in N_n}$ and define
\begin{align}
\xi_i^{k,\ell}
:=
\begin{cases}
\xi_i',
&
i\in A_{m,r},
\\
\xi_i,
&
i\notin A_{m,r},
\end{cases}
\qquad
Z^{k,\ell}
:=
(I-\rho W)^{-1}
\xi^{k,\ell}.
\label{eq}
\end{align}
Because the primitive shocks are IID, $Z_{\mathcal L_m}^{k,\ell}
\overset{d}{=}
Z_{\mathcal L_m}$ and $Z_{\mathcal L_m}^{k,\ell}
\indep
\sigma
\left(
Z_j:
j\in
\operatorname{Pred}_{\ell}(C_k)
\right)$.


Every shock in \(A_{m,r}\) lies at least \(Dr+1\) layers upstream of \(\mathcal L_m\), while each application of \(W\) spans at most \(D\) layers. Hence, no propagation chain of length \(s\leq r\) can connect \(A_{m,r}\) to \(\mathcal L_m\), so
\[
[W^s]_{ij}=0
\qquad
\text{for all }
i\in\mathcal L_m,\ j\in A_{m,r},\ \text{and }s\leq r.
\]
Thus, the resampled shocks affect \(Z_{\mathcal L_m}\) only through terms involving \(W^s\) with \(s\geq r+1\). 

Hence, using
\(\sum_{j\in N_n}[W^s]_{ij}\leq\kappa^s\),
\(\theta=|\rho|\kappa\), and the bound on $\mathbb E[|\zeta_i|]$,
\begin{align*}
\mathbb E\left[
\left|Z_i-Z_i^{k,\ell}\right|
\right] = \sum_{s=r+1}^{\infty}
\rho^s
\sum_{j\in A_{m,r}}
[W^s]_{ij} \mathbb E \left[ 
\left(\xi_j-\xi_j'\right)   \right]
&\leq
2M_\xi
\sum_{s=r+1}^{\infty}
|\rho|^s
\sum_{j\in N_n}[W^s]_{ij}
\leq
2M_\xi
\sum_{s=r+1}^{\infty}\theta^s
\end{align*}

Because every $f\in\mathcal F$ is $L$-Lipschitz, $d_{\mathcal F,|\mathcal L_m|}
\left(
Z_{\mathcal L_m},
Z_{\mathcal L_m}^{k,\ell}
\right)
\leq
L
\sum_{i\in\mathcal L_m}
\left|
Z_i-Z_i^{k,\ell}
\right|$. The coupling characterization of the $\tau$-coefficient therefore gives
\begin{align*}
&
\frac{1}{|\mathcal L_m|}\tau_{d_{\mathcal F,|\mathcal L_m|}}
\left(
\sigma
\left(
Z_j:
j\in
\operatorname{Pred}_{\ell}(C_k)
\right),
Z_{\mathcal L_m}
\right) \leq 2M_\xi
\sum_{s=r+1}^{\infty}\theta^s
=
\frac{
2LM_{\xi}
}{
1-\theta
}
\theta^{r+1}.
\end{align*}
After taking the maximum over the blocks and color classes, we obtain \eqref{eqn:auto.tau}.

\paragraph{Maximal inequality} Given our choice of partition and coloring and \eqref{eqn:auto.chormatic} and \eqref{eqn:auto.tau}, it follows from Theorem \ref{thm:maximal}, for any $a,b \ge 1$ any $p\in[1,\infty]$, and any $r_{n}$,
\begin{align}
	\left\|
	\sup_{f\in\mathcal F}
	\left|
	\frac{1}{n}
	\sum_{i\in N_n}
	\left\{
	f(Z_i)
	-
	\mathbb E[f(Z_i)]
	\right\}
	\right|
	\right\|_{L^1(\mathbb P)} \lesssim &
	\sqrt{
		\frac{
			\min\{Dr_n+1,L_n\}
		}{
			n
		}
	}
	\max_{m\in[M_n]}
	\Gamma_p(\mathcal L_m)
	\gamma_{2,a}
	\left(
	\mathcal F,
	\max_{i\in N_n}
	\|\cdot\|_{L^{2p}(P_{Z_i})}
	\right) \nonumber
	\\
	&+
	\left\{
	\frac{
		q_n\min\{Dr_n+1,L_n\}
	}{
		n
	}
	+
\theta^{r_n+1}
	\right\}
	\gamma_{1,b}
	\left(
	\mathcal F,
	\|\cdot\|_{\infty}
	\right)	\label{eq:production-maximal-inequality}
\end{align}

By using the (admittedly crude) bound of $\Gamma_p(\mathcal L_{m}) \leq \sqrt{q_{n}}$ for $p=1$; by expression \eqref{eqn:NAR.complexity}; and by choosing $r_n
=
\left\lceil
\frac{
	\log n
}{
	2|\log (\rho \kappa)|
}
\right\rceil$ to balance the terms, one obtains 
\begin{align*}
	\left\|
	\sup_{f\in\mathcal F}
	\left|
	\frac{1}{n}
	\sum_{i\in N_n}
	\left\{
	f(Z_i)
	-
	\mathbb E[f(Z_i)]
	\right\}
	\right|
	\right\|_{L^1(\mathbb P)} \lesssim
	\sqrt{ 
		\frac{ q_{n} 
			\min\{D \log n,L_n\}
		}{
			n
		}
	}
 +
	\left\{
	\frac{
		q_n\min\{D \log n,L_n\}
	}{
		n
	}
	+
\frac{1}{\sqrt{n}} \right\}. 
\end{align*}

Therefore, by Theorem \ref{thm:single-scale-network-gc} the autoregressive network with a H\"older ball is graph Glivenko-Cantelli provided the maximal production layer size does not grow too fast, i.e., $	\frac{q_n\min\{D \log n,L_n\}}{n} = o(1)$, and it has an effective sample size satisfying the following bound $\mathfrak{n}_{\mathrm{eff}} \geq 	\frac{n}{q_n\min\{D \log n,L_n\}}$. 

Thus, the loss relative to the IID benchmark is governed by two features of the layered production structure: the maximal number of firms within a production layer and the log coloring cost required to separate blocks sufficiently far for the autoregressive dependence to become negligible.

\subsection{Nonlinear smooth local propagation model}


Let \((\varepsilon_j)_{j\in N_n}\) be IID shocks, and suppose
\begin{align}\label{eqn:Nonlinear.model}
Y_i = H_i\big((\varepsilon_j)_{j\in \mathcal N (i,R)}\big),
\end{align} 
for some $R > 0$ and $\mathcal N(i,R) :  = \{ j \in N_{n} \colon d_{n}(i,j) \leq R \}$. Assume that the map \(H_i\) satisfies the coordinatewise Lipschitz bound
\begin{align} \label{eqn:Nonlinear.lip}
|H_i(\varepsilon)-H_i(\varepsilon')|
\le
\sum_{j\in \mathcal N(i,R)} c_{ij}|\varepsilon_j-\varepsilon_j'|.
\end{align} 

This model allows nonlinear propagation of shocks within a local neighborhood. The condition \eqref{eqn:Nonlinear.lip} is a coordinatewise Lipschitz condition on the structural map \(H_i\). It requires the effect of perturbing the primitive shock vector to be bounded by the sum of the coordinatewise perturbations, with weights \(c_{ij}\). 
An important case is the network-interference model in estimation of treatment effects which we now discuss.

\paragraph{Network interference in treatment-effect models.}

Under network interference, one unit's treatment may affect the outcomes of network-connected units; see \cite{hudgens2008toward,aronow2017estimating,leung2020treatment}. Following the exposure-mapping approach of \cite{aronow2017estimating}, consider
\begin{align}
	Y_i
	&=
	\alpha
	+
	\tau_0D_i
	+
	\sum_{s=1}^{R}
	\rho_s\bar D_i(s)
	+
	U_i,
	\qquad
	\bar D_i(s)
	=
	\frac{1}{|S_n(i,s)|}
	\sum_{j\in S_n(i,s)}
	D_j,
	\label{eqn:network.interference.model}
\end{align}
where $D_i\sim\operatorname{Bern}(\pi)$ is the treatment status,
$S_n(i,s):=\{j\in N_n:d_n(i,j)=s\}$, and $U_i$ is an idiosyncratic shock. We assume that $(D_i,U_i)_{i\in N_n}$ is IID across nodes and that $D_i$ and $U_i$ are independent. Here, $\tau_0$ is the direct effect of own treatment and $\rho_s$ is the spillover effect of average treatment exposure at distance $s$. This is a finite-range version of the network-spillover models in \cite{leung2020treatment}: imposing $\rho_s=0$ for $s>R$ makes $Z_i=(Y_i,D_i,U_i)$ measurable with respect to $\mathcal N(i,R)$.\footnote{We set $\bar D_i(s)=0$ whenever $S_n(i,s)=\varnothing$.}

This model is a particular instance of the nonlinear smooth propagation model, where the primitive shock is given by 
\(\varepsilon_j=(D_j,u_j)\), and\footnote{One can add exogenous covariates, $X_{j}$, we omit these for the sake of presentation.}
\[
c_{ij}
=
\max\!\left\{|\tau_0|,1\right\}
\mathbf 1\{j=i\}
+
\mathbf 1\!\left\{1\leq d_n(i,j)\leq R\right\}
\frac{
	|\rho_{d_n(i,j)}|
}{
	|S_n(i,d_n(i,j))|
}.
\]

\subsubsection{$\tau$-mixing coefficient}

For model \eqref{eqn:Nonlinear.model}--\eqref{eqn:Nonlinear.lip}, finite-range propagation implies that,
\begin{align}
\label{eqn:nonliner-bar-tau}
\tau_{\mathcal B}(r,q)=0
\qquad \forall r\geq 2R+1,~\forall q \geq 1, \forall \mathcal B\subseteq\operatorname{cone}\mathcal F.
\end{align}
Indeed, if \(d_n(A,B)>2R\), the \(R\)-neighborhoods of \(A\) and \(B\) are disjoint by the triangle inequality. Hence, \(Z_A\) and \(Z_B\) are measurable with respect to disjoint collections of IID primitive shocks and are therefore independent. 
\(\bar{\tau}_{\mathcal B}\) and \(\tau_{\mathcal B}^{\max}\) under any partition-coloring pair.

\subsubsection{Maximal inequality and Glivenko--Cantelli results: Application to uniform convergence of the least-squares criterion}

Let $X_i
	:=
	\left(
	1,
	D_i,
	\bar D_i(1),
	\ldots,
	\bar D_i(R)
	\right)^{\top}$ and $\theta_0
	:=
	\left(
	\alpha_0,
	\tau_0,
	\rho_{0,1},
	\ldots,
	\rho_{0,R}
	\right)^{\top}$. Suppose $\theta_{0}$ belongs to a compact subset $\Theta \subset\mathbb R^{R+2}$.
	
	The network interference model in \eqref{eqn:network.interference.model} can be cast as  $Y_i
	=
	X_i^{\top}\theta_0
	+
	U_i$.  Since $U_{i}$ is IID, for estimation of $\theta_0$ one can consider the OLS estimator which minimizes 
\begin{align*}
	Q_n(\theta)
	:= \mathbb P_{n} \ell_{\theta} : = 
	\frac{1}{n}
	\sum_{i\in N_n}
	\ell_\theta(Y_i,X_i),~with~	\ell_\theta(Y_i,X_i)
	:=
	\left(
	Y_i-X_i^{\top}\theta
	\right)^2.
\end{align*}
In order to establish the asymptotic properties of the estimator, it is useful to control 
\begin{align*}
	\mathbb E\left[
\sup_{\theta\in\Theta}
\left|
Q_n(\theta)-Q_n^0(\theta)
\right|
\right],~with~
	Q_n^0(\theta) : = \mathbb P^{0}_{n} \ell_{\theta}
	:=
	\frac{1}{n}
	\sum_{i\in N_n}
	\mathbb E\left[
	\ell_\theta(Z_i,X_i)
	\right].
\end{align*}
	
To do this, we employ Theorem \ref{thm:maximal}. A technical hurdle is that $\ell_{\theta}$ is Lipschitz, but not with uniform bounded constant (as it depends on the data). To circumvent this issue, for any $M>0$, we split 
$	\ell_\theta
=
\ell_{\theta,M}
+
r_{\theta,M}$ with 
\begin{align*}
	\ell_{\theta,M}(Y_i,X_i)
	:=
	\ell_\theta(Y_i,X_i)
	\mathbf 1\left\{
	|Y_i|\le M
	\right\},~and~
	r_{\theta,M}(Y_i,X_i)
	:=
	\ell_\theta(Y_i,X_i)
	\mathbf 1\left\{
	|Y_i|>M
	\right\}.
\end{align*}
We also assume that, for some $\delta>0$,
\begin{align}\label{eqn:uniform.outcome.moment}
	\max_{i\in N_n}
	\mathbb E\left[
	|Y_i|^{2+\delta}
	\right]
	\le
	C_Y
	<
	\infty.
\end{align}

Since $\|X_i\|_2\le \sqrt{R+2}$, $\sup_{\theta\in\Theta}
	\ell_\theta(Y_i,X_i)
	\le
	2Y_i^2
	+
	2C_\Theta^2(R+2)$, where   $C_\Theta
	:=
	\sup_{\theta\in\Theta}
	\|\theta\|_2
	<
	\infty$.  This result  and  \eqref{eqn:uniform.outcome.moment} imply $\max_{i\in N_n}
	\mathbb E\left[
	\sup_{\theta\in\Theta}
	r_{\theta,M}(Y_i,X_i)
	\right]
	\lesssim
	M^{-\delta}$. 
Therefore,
\begin{align}\label{eqn:loss.tail.bound}
	\mathbb E\left[
	\sup_{\theta\in\Theta}
	\left|
	(P_n-P_n^0)
	r_{\theta,M}
	\right|
	\right]
	\lesssim
	M^{-\delta}.
\end{align}

For any $\theta,\theta'\in\Theta$, $\left|
	\ell_{\theta,M}(Y_i,X_i)
	-
	\ell_{\theta',M}(Y_i,X_i)
	\right|
	\le
	2\sqrt{R+2}
	\left\{
	M+
	C_\Theta\sqrt{R+2}
	\right\}
	\|\theta-\theta'\|_2$. 
Consequently, for $\mathcal L_M
	:=
	\left\{
	\ell_{\theta,M}:
	\theta\in\Theta
	\right\}$,
\begin{align*}
	\gamma_{2,a}
	\left(
	\mathcal L_M,
	\max_{i\in N_n}
	\|\cdot\|_{L^{2p}(P_{(Y_i,X_i)})}
	\right)
	&\lesssim M 
	\gamma_{2,a}
	\left(
	\Theta,
	\|\cdot\|_2
\right),~and~
	\gamma_{1,b}
	\left(
	\mathcal L_M,
	\|\cdot\|_\infty
	\right)
	&\lesssim
	M
	\gamma_{1,b}
	\left(
	\Theta,
	\|\cdot\|_2
	\right).
\end{align*}
Moreover, truncation does not alter the finite-range dependence structure. Thus, by \eqref{eqn:nonliner-bar-tau}, $\bar\tau_{\operatorname{cone}\mathcal L_M}(r_n,1)
	=
	0$  with  $r_n=2R+1$. Let $K_{r_{n}}$ denote the associated chormatic number which can always be bounded by the degree $\Delta ( \mathfrak{G}_{n} ) : = \max_{i\in N_n} \operatorname{deg}_{\mathfrak{G}_{n}}(i)$. Since $\Theta$ is a bounded subset of
$\mathbb R^{R+2}$, the preceding complexity bounds and Theorem \ref{thm:maximal} imply
\begin{align}\label{eqn:truncated.loss.maximal.bound}
	\mathbb E\left[
	\sup_{\theta\in\Theta}
	\left|
	(P_n-P_n^0)
	\ell_{\theta,M}
	\right|
	\right]
	\lesssim
	(M+1)
	\left\{
	\sqrt{\frac{ \Delta (\mathfrak{G}_{n}) }{n}}
	+
	\frac{  \Delta (\mathfrak{G}_{n})  }{n}
	\right\}.
\end{align}
Combining \eqref{eqn:loss.tail.bound} and \eqref{eqn:truncated.loss.maximal.bound} gives
\begin{align*}
	\mathbb E\left[
\sup_{\theta\in\Theta}
\left|
Q_n(\theta)-Q_n^0(\theta)
\right|
\right]
	\lesssim
	(M+1)
	\left\{
	\sqrt{\frac{  \Delta (\mathfrak{G}_{n})  }{n}}
	+
	\frac{  \Delta (\mathfrak{G}_{n}) }{n}
	\right\}
	+
	M^{-\delta}.
\end{align*}
Therefore, provided the degree does not grow too fast --- formally $\frac{  \Delta (\mathfrak{G}_{n})  }{n} \to 0 $ ---  then, for any sequence $M_n\to\infty$ satisfying $M_n
	\sqrt{\frac{  \Delta (\mathfrak{G}_{n})  }{n}} \to
	0$, 
it follows that
\begin{align*}
	\sup_{\theta\in\Theta}
	\left|
	\frac{1}{n}
	\sum_{i\in N_n}
	\ell_\theta(Y_i,X_i)
	-
	\frac{1}{n}
	\sum_{i\in N_n}
	\mathbb E\left[
	\ell_\theta(Y_i,X_i)
	\right]
	\right|
	=
	o_\mathbb{P}(1).
\end{align*}

\bibliographystyle{plainnat}

\bibliography{CoupleNetworks.bib}

\pagebreak

\appendix

\setcounter{page}{1}

\begin{center}
{\Huge{\textbf{Online Appendix}}}
\end{center}

\section{Proofs of Theorems \ref{thm:coupling}, \ref{thm:maximal}, and \ref{thm:single-scale-network-gc}, and Corollary \ref{cor:finite-dependency-graph}}

\subsection{Proof of Theorem \ref{thm:coupling}}
\label{sec:coupling.proof}

The following lemma is a re-statement of Lemma 1 in \cite{dedecker2006inequalities} and is here merely for completeness; we refer the reader to that  paper for a proof.


\begin{lemma}\label{lem:couple.strong}
	Let $(\mathsf X,d)$ be a Polish metric space, let $X$ be an
	$\mathsf X$-valued random element, and let $\mathcal M$ be a
	sub-$\sigma$-field. Suppose that, on a suitable extension of the
	underlying probability space, there exists a random variable $U\sim \mathrm{Unif}[0,1]$ independent of $\sigma(X)\vee\mathcal M$.
	
	Then there exists an $\mathsf X$-valued random element $X^\ast$,
	measurable with respect to $\sigma(U)\vee\sigma(X)\vee\mathcal M$, 
	such that
	\begin{enumerate}
		\item
		$X^\ast \stackrel{d}{=} X$;
		
		\item $X^\ast \perp\!\!\!\perp \mathcal M$;
		
		\item
		$\mathbb E\!\left[d(X,X^\ast)\right]
		=
		\tau_d(\mathcal M,X)$. 
	\end{enumerate}
\end{lemma}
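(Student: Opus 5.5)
The plan is to reduce the construction to a pointwise (in $\omega$) optimal transport problem between the conditional law of $X$ given $\mathcal M$ and the marginal law $P_X$. We then glue the optimal plans together measurably with the auxiliary uniform $U$, and finally identify the resulting cost with $\tau_d(\mathcal M,X)$ through Kantorovich--Rubinstein duality.

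\emph{Step 1: the conditional law.} Since $\mathsf X$ is Polish, there is a regular conditional distribution $\mu_\omega:=P_{X\mid\mathcal M}(\omega,\cdot)$, and $\omega\mapsto\mu_\omega$ is $\mathcal M$-measurable as a map into the space of probability measures. The moment condition $\mathbb E[d(X,x_0)]<\infty$ gives $\int d(x,x_0)\,\mu_\omega(dx)<\infty$ a.s., so $\mu_\omega$ and $P_X$ both lie in the Wasserstein-$1$ space. For each $\omega$, let $\pi_\omega$ be an optimal coupling of $(\mu_\omega,P_X)$ for the cost $d$; it exists because $d$ is lower semicontinuous and the marginals are tight.

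\emph{Step 2: the key technical point.} We must choose $\omega\mapsto\pi_\omega$ $\mathcal M$-measurably. We would obtain this from a measurable selection theorem: the set-valued map $\mu\mapsto\{\text{optimal plans between }\mu\text{ and }P_X\}$ has nonempty compact values and a closed graph, by stability of optimal plans under weak convergence. Kuratowski--Ryll-Nardzewski then yields a Borel selection $\mu\mapsto\pi(\mu)$, and we set $\pi_\omega:=\pi(\mu_\omega)$. An alternative route is to apply Berbee/Dudley-type arguments to a countable dense family. \textbf{This selection step is the main obstacle.}

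\emph{Step 3: building $X^\ast$.} Disintegrate $\pi_\omega(dx,dy)=\mu_\omega(dx)\,K_\omega(x,dy)$, with the kernel jointly measurable in $(\omega,x)$. Via a Borel isomorphism of $\mathsf X$ with a Borel subset of $\mathbb R$, and the conditional quantile transform, there is a jointly measurable map $G$ such that $G(\omega,x,U)\sim K_\omega(x,\cdot)$ whenever $U\sim\mathrm{Unif}[0,1]$. Set $X^\ast:=G(\omega,X,U)$; it is $\sigma(U)\vee\sigma(X)\vee\mathcal M$-measurable. Because $U$ is independent of $\sigma(X)\vee\mathcal M$, the conditional law of $(X,X^\ast)$ given $\mathcal M$ is $\pi_\omega$. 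Its second marginal is $P_X$ for every $\omega$, which gives both $X^\ast\perp\!\!\!\perp\mathcal M$ and $X^\ast\stackrel{d}{=}X$. Moreover,
\[
\mathbb E[d(X,X^\ast)]=\mathbb E\Big[\textstyle\int d\,d\pi_\omega\Big]=\mathbb E\big[W_1(\mu_\omega,P_X)\big].
\]

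\emph{Step 4: identification with $\tau$.} By Kantorovich--Rubinstein duality,
\[
W_1(\mu_\omega,P_X)=\sup_{g\in\operatorname{Lip}_1}\Big|\textstyle\int g\,d\mu_\omega-\int g\,dP_X\Big|=\sup_{g\in\operatorname{Lip}_1}\big|\mathbb E[g(X)\mid\mathcal M]-\mathbb E[g(X)]\big|\quad\text{a.s.}
\]
To interpret the supremum of conditional expectations pathwise, we use the regular version, which coincides with $\mathbb E[g(X)\mid\mathcal M]$ a.s. for each $g$. Measurability of the supremum follows by reducing to a countable $d$-dense family of Lipschitz functions, possible by separability. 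Taking expectations gives $\mathbb E[d(X,X^\ast)]=\tau_d(\mathcal M,X)$.

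As a byproduct, for any $X'$ with $X'\stackrel{d}{=}X$ and $X'\perp\!\!\!\perp\mathcal M$, the conditional law of $(X,X')$ given $\mathcal M$ couples $\mu_\omega$ and $P_X$. Hence $\mathbb E[d(X,X')]\ge\mathbb E[W_1(\mu_\omega,P_X)]$, which also yields the infimum representation used in \eqref{eqn:tau.coupling.main.1}.
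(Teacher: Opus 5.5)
The paper gives no argument of its own for this lemma: it restates Lemma~1 of \cite{dedecker2006inequalities} and defers to that reference. Your proposal is a correct, self-contained reconstruction of the standard optimal-transport proof. Its steps are the regular conditional law $\mu_\omega$, a measurably selected optimal plan between $\mu_\omega$ and $P_X$, gluing through $U$, and Kantorovich--Rubinstein duality. It also makes precise two things the paper simply imports. The first is a version-independent meaning for the supremum over the uncountable class $\operatorname{Lip}_1$ in \eqref{eq:tau-generic}. The second is the minimal-coupling representation used in \eqref{eqn:tau.coupling.main.1}, which your final remark establishes.

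Two steps deserve more care.

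In Step~2, the usual stability theorem for optimal plans needs $\liminf_k W_1(\mu_k,P_X)<\infty$, and weak convergence $\mu_k\to\mu$ does not provide this. There are two fixes. One is to invoke the fact that cyclically monotone plans between measures in $\mathcal P_1(\mathsf X)$ are optimal for the cost $d$. The simpler one is to take $\mathcal P_1(\mathsf X)$ with the $W_1$ topology as the domain. Its Borel $\sigma$-field is the trace of the weak one, so $\omega\mapsto\mu_\omega$ remains measurable. There the closed graph follows at once from lower semicontinuity of $\pi\mapsto\int d\,d\pi$ and continuity of $W_1(\cdot,P_X)$. Kuratowski--Ryll-Nardzewski also needs weak measurability of the correspondence. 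You get this from upper hemicontinuity: closed graph plus Prokhorov tightness of $\bigcup_k\Pi(\mu_k,P_X)$ along convergent sequences.

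In Step~4, make the countable family explicit. A suitable choice is $\{\min_{j\le J}(a_j+d(\cdot,x_j)): J\in\mathbb N,\ a_j\in\mathbb Q\}$ with $(x_j)$ dense. For $g\in\operatorname{Lip}_1$ one has $\min_{j\le J}(g(x_j)+d(\cdot,x_j))\downarrow g$ pointwise, with integrable envelope $|g(x_1)|+d(\cdot,x_1)$. Dominated convergence then gives equality of the two suprema simultaneously for all $\mu,\nu\in\mathcal P_1(\mathsf X)$.

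Finally, Steps~1 and~4 use that $d$ is a genuine metric for the Polish topology, hence lower semicontinuous. That is the lemma's hypothesis, but it is not automatic for the pseudometrics $d_{\mathcal B,m}$ built from merely measurable classes, to which the paper later applies the lemma.
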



	Fix $k\in [K]$. Let $C_k$ be denoted as 
	\[
	 C_k=\{m_{k,1},\ldots,m_{k,L_k}\},~
	m_{k,1}<\cdots<m_{k,L_k}~and~m_{k,i} \in [M].
	\]
	
	For $\ell\in [L_{k}]$, define	$X_{k,\ell}:=Z_{B_{m_{k,\ell}}}$, 
	viewed as a random element of $\mathsf Z^{|B_{m_{k,\ell}}|}$ equipped with the  metric $	d_{\mathcal{B},|B_{m_{k,\ell}}|}$. Also define the past $\sigma$-field within color $k$ by
	\[
	\mathcal M_{k,\ell-1}
	:=
	\sigma\!\left(
	Z_j: j\in \bigcup_{1 \leq u<\ell} B_{m_{k,u}}
	\right),
	\qquad \ell\ge 1,
	\]
	with the convention that $\mathcal M_{k,0}$ is the trivial $\sigma$-field.

	On a suitable extension of the original probability space, let $\left(U_{k,\ell}\right)_{k\in[K],\,\ell\in[L_k]}$ 
	be IID \(\operatorname{Unif}[0,1]\) random variables, independent of
	\(\sigma(Z_i:i\in N_n)\). For every \(k\in[K]\) and
	\(\ell\in[L_k]\), define
	\[
	\widetilde{\mathcal M}_{k,\ell-1}
	:=
	\mathcal M_{k,\ell-1}
	\vee
	\sigma(U_{k,1},\ldots,U_{k,\ell-1}),
	\]
	where \(\widetilde{\mathcal M}_{k,0}\) is trivial.
	
	Since
	$\sigma(U_{k,1},\ldots,U_{k,\ell-1})
	\perp\!\!\!\perp
	\sigma(X_{k,\ell})\vee\mathcal M_{k,\ell-1}$, for every bounded measurable function \(h\), $\mathbb E\left[
	h(X_{k,\ell})
	\mid
	\widetilde{\mathcal M}_{k,\ell-1}
	\right]
	=
	\mathbb E\left[
	h(X_{k,\ell})
	\mid
	\mathcal M_{k,\ell-1}
	\right]$ a.s. Hence,
	\begin{align}
		\tau_{d_{\mathcal B,|B_{m_{k,\ell}}|}}
		\left(
		\widetilde{\mathcal M}_{k,\ell-1},
		X_{k,\ell}
		\right) =
		\tau_{d_{\mathcal B,|B_{m_{k,\ell}}|}}
		\left(
		\mathcal M_{k,\ell-1},
		X_{k,\ell}
		\right).
		\label{eq:tau-enlarged-filtration}
	\end{align}
	
	For every \(k\in[K]\) and \(\ell\in[L_k]\), apply Lemma
	\ref{lem:couple.strong} with $X=X_{k,\ell}$, 
	$\mathcal M=\widetilde{\mathcal M}_{k,\ell-1}$, and  $U=U_{k,\ell}$. 
	This yields a random element \(X_{k,\ell}^{\ast}\), measurable with
	respect to $\sigma(U_{k,\ell})
	\vee
	\sigma(X_{k,\ell})
	\vee
	\widetilde{\mathcal M}_{k,\ell-1}$, 
	such that
	\begin{align}
		&X_{k,\ell}^{\ast}
		\stackrel{d}{=}
		X_{k,\ell},
		\label{eq:coupling-marginal}
		\\
		&X_{k,\ell}^{\ast}
		\perp\!\!\!\perp
		\widetilde{\mathcal M}_{k,\ell-1},
		\label{eq:coupling-independence}
		\\
		& \mathbb E\left[
		d_{\mathcal B,|B_{m_{k,\ell}}|}
		\left(
		X_{k,\ell}^{\ast},
		X_{k,\ell}
		\right)
		\right]
		=
		\tau_{d_{\mathcal B,|B_{m_{k,\ell}}|}}
		\left(
		\widetilde{\mathcal M}_{k,\ell-1},
		X_{k,\ell}
		\right).
		\label{eq:coupling-cost-enlarged}
	\end{align}
	Combining \eqref{eq:tau-enlarged-filtration} and
	\eqref{eq:coupling-cost-enlarged}, we obtain
	\begin{align}
		&
		\mathbb E\left[
		d_{\mathcal B,|B_{m_{k,\ell}}|}
		\left(
		X_{k,\ell}^{\ast},
		X_{k,\ell}
		\right)
		\right] =
		\tau_{d_{\mathcal B,|B_{m_{k,\ell}}|}}
		\left(
		\mathcal M_{k,\ell-1},
		X_{k,\ell}
		\right).
		\label{eq:coupling-cost}
	\end{align}

    For every \(k\in[K]\) and \(\ell\in[L_k]\), define $Z_{B_{m_{k,\ell}}}^{\ast}:=
	X_{k,\ell}^{\ast}$. 	Since \(\{C_1,\ldots,C_K\}\) is a partition of \([M]\), this defines
	\(Z_{B_m}^{\ast}\) for every \(m\in[M]\). By
	\eqref{eq:coupling-marginal}, $Z_{B_m}^{\ast}\stackrel{d}{=}Z_{B_m},~m\in[M]$. 

	We next verify mutual independence. For \(u<\ell\), $X_{k,u}^{\ast}$ measurable with respect to $
	\sigma(U_{k,u})
	\vee
	\sigma(X_{k,u})
	\vee
	\widetilde{\mathcal M}_{k,u-1}$. 
	Since \(u<\ell\), $\sigma(U_{k,u})
	\vee
	\sigma(X_{k,u})
	\vee
	\widetilde{\mathcal M}_{k,u-1}
	\subseteq
	\widetilde{\mathcal M}_{k,\ell-1}$. 
	Therefore, $\sigma\left(
	X_{k,1}^{\ast},\ldots,X_{k,\ell-1}^{\ast}
	\right)
	\subseteq
	\widetilde{\mathcal M}_{k,\ell-1}$. 
	Thus, by \eqref{eq:coupling-independence}, $X_{k,\ell}^{\ast}
	\perp\!\!\!\perp
	\sigma\left(
	X_{k,1}^{\ast},\ldots,X_{k,\ell-1}^{\ast}
	\right)$.  An induction on \(\ell\) therefore shows that $\left(
	X_{k,1}^{\ast},\ldots,X_{k,L_k}^{\ast}
	\right) $ is mutually independent.

	It remains to establish the coupling bound. Fix \(k\in[K]\) and
	\(\ell\in[L_k]\), and recall that $\operatorname{Pred}_{\ell}( C_k)
	:=
	\bigcup_{1\leq u<\ell}B_{m_{k,u}}$ and observe that $\mathcal M_{k,\ell-1} = \sigma\!\left(
		Z_j: j\in \operatorname{Pred}_{\ell}( C_k)
		\right)$. 
	Therefore,	
	
	\begin{align}
		\tau_{d_{\mathcal{B},|B_{m_{k,\ell}}|}}(\mathcal M_{k,\ell-1},X_{k,\ell})
		&=
		\tau_{d_{\mathcal{B},|B_{m_{k,\ell}}|}} \!\left(
		\sigma\!\left(
		Z_j: j\in \operatorname{Pred}_{\ell}( C_k)
		\right),
		\, Z_{B_{m_{k,\ell}}}
		\right).
		\label{eq:tauGn-bound-proof}
	\end{align}
	Combining \eqref{eq:coupling-cost} and
\eqref{eq:tauGn-bound-proof}, we obtain
\[
\mathbb E\left[
d_{\mathcal{B},|B_{m_{k,\ell}}|}
\left(
Z_{B_{m_{k,\ell}}}^{\ast},
Z_{B_{m_{k,\ell}}}
\right)
\right]
=	\tau_{d_{\mathcal{B},|B_{m_{k,\ell}}|}} \!\left(
\sigma\!\left(
Z_j: j\in \operatorname{Pred}_{\ell}( C_k)
\right),
\, Z_{B_{m_{k,\ell}}}
\right). 
\]

By summing over $\ell \in [L_{k}]$ and over $k \in [K]$, it follows that 
\begin{align*}
	\sum_{k=1}^{K} \sum_{\ell = 1}^{L_{k}} \mathbb E  \left[ d_{ \mathcal{B},|B_{m}|}(Z^{\ast}_{B_{m}},Z_{B_{m}}) \right] =  \sum_{k=1}^{K} \sum_{\ell = 1}^{L_{k}} \tau_{	d_{\mathcal{B},|B_m|}} ( \sigma\!\left(
	Z_j: j\in \operatorname{Pred}_{\ell}( C_k)
	\right),
	\, Z_{B_{m_{k,\ell}}} ) =   n \bar{\tau}_{\mathcal B } (\{B_{1},...,B_{M}\} , \{C_{1} ,..., C_{K}\}),
\end{align*}
thereby proving the desired result. 	
	
Finally, since the coloring is proper, $d_n(B_{m_{k,u}},B_{m_{k,\ell}})>r$	for every $u<\ell$. Hence, $d_n(\operatorname{Pred}_{\ell}( C_k) ,B_{m_{k,\ell}})>r$. This implies that
	\begin{align}
		\tau_{d_{\mathcal{B},|B_{m_{k,\ell}}|}}(\mathcal M_{k,\ell-1},X_{k,\ell})
		&=
		\tau_{d_{\mathcal{B},|B_{m_{k,\ell}}|}} \!\left(
		\sigma\!\left(
		Z_j: j\in \operatorname{Pred}_{\ell}( C_k)
		\right),
		\, Z_{B_{m_{k,\ell}}}
		\right)
		\notag\\
		&  \le |B_{m_{k,\ell}}| \sup_{\substack{A,B\subseteq N_n\\ d_n(A,B)\ge r,\; |B| \le |B_{m_{k,\ell}}| }}   \frac{1}{|B|}	\tau_{d_{\mathcal{B},|B_{m_{k,\ell}}|}} \!\left(
		\sigma\!\left(
		Z_j: j\in A
		\right),
		\, Z_{B}
		\right)  \notag   \\
		&= |B_{m_{k,\ell}}| \tau_{\mathcal{B} }(r,|B_{m_{k,\ell}}| ).
	\end{align}
	
	Combining \eqref{eq:coupling-cost} and
	this expression, we obtain
	\[
	\mathbb E\left[
	d_{\mathcal{B},|B_{m_{k,\ell}}|}
	\left(
	Z_{B_{m_{k,\ell}}}^{\ast},
	Z_{B_{m_{k,\ell}}}
	\right)
	\right]
	\le
	|B_{m_{k,\ell}}|
	\tau_{\mathcal B }
	\left(
	r,|B_{m_{k,\ell}}|
	\right).
	\]
	This proves \eqref{eqn:tau.bound.max}.

%

\subsection{Proof of Theorem \ref{thm:maximal}} 
\label{sec:ProofMaximal} 

The theorem follows from a more general lemma. This lemma allows for the partition of the graph and the coloring to depend on the scale $l$ implied by the partition over $\mathcal F$ $(T_{l})_{l \in \mathbb N_0}$ --- it is a generalization of Lemma 3.1 in \cite{Pouzo2026} to the general graph-dependent data. 

To state the lemma we need to introduce some additional concepts. First, the lemma uses a slightly more general concept of admissible sequence partition: A $T^{\infty}(c,d) : = (T_{l}(c,d))_{l \in \mathbb N_0}$ is a sequence of partitions of $\mathcal F$ with $|T_{0}(c,d)| = 1$ and $|T_{l}(c,d)| \leq 2^{d2^{l/c}}$ for some $c,d \geq 1$. 

A sequence of partitions of the graph is given by $(\{ B_{1},\ldots,B_{M_{l}}   \})_{l \in \mathbb{N}_0} $ where for each $l$, $B_{1},...,B_{M_{l}} $ is a partition of $N_{n}$. The associated coloring of this partition is denoted $\mathcal C_{r_{l}}[B_{1},\ldots,B_{M_{l}} ]$.

Before stating the general lemma, it is useful to provide an upper bound on $\operatorname{Var}\left(  |B|^{-1/2} \sum_{i \in B} g(Z_{i})     \right)  $ for any block $B \subseteq N_{n}$ and any $g \in \mathcal{F}$. This bound will be key to defining the relevant topology for measuring complexity of the space $\mathcal F$.

\begin{lemma}\label{lem:VarBlock.upper.0}
	For any $\mathcal B \subseteq \mathcal F - \mathcal F$, any $g \in \mathcal B$, and any block $B \subseteq N_{n}$, 
	\begin{align*}
		\sqrt{	\operatorname{Var}\left(  |B|^{-1/2} \sum_{i \in B} g(Z_{i})     \right)  } \leq  ||g||_{B},
	\end{align*}
	where 
	\begin{align*}
		||g||_{B} : =  \sqrt{ \sum_{\rho \in R(B)}  	\frac{|S_{B}(\rho)|}{|B|}    \max_{i \in B} \int_{0}^{1}  1\left\{\alpha_{ \mathcal{F} , B }(\rho) \geq u	\right\} |Q_{|g(Z_{i})|}(u) |^{2} du }. 
	\end{align*}
\end{lemma}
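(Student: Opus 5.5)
The argument expands the variance into pairwise covariances, groups the pairs by graph distance, and bounds each covariance with Rio's covariance inequality. The mixing coefficient entering that inequality is the class-restricted one, $\alpha_{\mathcal F,B}$.

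\textbf{Step 1 (expansion).} Write
\[
\operatorname{Var}\Big(|B|^{-1/2}\sum_{i\in B} g(Z_i)\Big)=\frac{1}{|B|}\sum_{(i,j)\in B\times B}\operatorname{Cov}(g(Z_i),g(Z_j)).
\]
Since every pair $(i,j)\in B\times B$ has a distance $d_n(i,j)\in R(B)$, the sum can be reorganised as
\[
\frac{1}{|B|}\sum_{\rho\in R(B)}\sum_{(i,j)\in S_B(\rho)}\operatorname{Cov}(g(Z_i),g(Z_j)).
\]
Pairs at distance $\infty$ are independent by the standing assumption, so they contribute zero. This is consistent with the convention $\alpha_{\mathcal F,B}(\infty)=0$.

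\textbf{Step 2 (covariance inequality).} For $(i,j)\in S_B(\rho)$ I would apply Rio's inequality (Theorem 1.1 in \cite{rio2017asymptotic}):
\[
|\operatorname{Cov}(X,Y)|\le 2\int_0^{\alpha} Q_{|X|}(u)\,Q_{|Y|}(u)\,du,
\]
where $X=g(Z_i)$, $Y=g(Z_j)$ and $\alpha$ is the strong-mixing coefficient between $\sigma(X)$ and $\sigma(Y)$. The key point is that this $\alpha$ only involves events $\{g(Z_i)\in\cdot\}$ and $\{g(Z_j)\in\cdot\}$. Because $g\in\mathcal B\subseteq\mathcal F-\mathcal F$ and $d_n(i,j)\ge\rho$, this is exactly the quantity $\alpha(f(Z_i),f(Z_j))$ that is dominated by $\alpha_{\mathcal F,B}(\rho)$. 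Since the integrand is nonnegative and decreasing in the upper limit's domain, the integral can be extended to the upper limit $\alpha_{\mathcal F,B}(\rho)$.

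The diagonal $\rho=0$ is handled with the convention $\alpha_{\mathcal F,B}(0)=1$: there $\operatorname{Var}(g(Z_i))\le \int_0^1 Q_{|g(Z_i)|}^2$.

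\textbf{Step 3 (Cauchy--Schwarz).} Next apply Cauchy--Schwarz inside the integral,
\[
\int_0^{\alpha}Q_{|X|}Q_{|Y|}\le\Big(\int_0^{\alpha}Q_{|X|}^2\Big)^{1/2}\Big(\int_0^{\alpha}Q_{|Y|}^2\Big)^{1/2}\le \max_{i\in B}\int_0^1 1\{\alpha_{\mathcal F,B}(\rho)\ge u\}\,Q_{|g(Z_i)|}(u)^2\,du .
\]
Summing over the $|S_B(\rho)|$ pairs at each distance, dividing by $|B|$, and summing over $\rho$ gives $\|g\|_B^2$, up to the factor $2$ from Rio's inequality.

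\textbf{Main obstacle.} The delicate point is the constant: the statement has no factor $2$. To remove it I would use the sharper form of Rio's inequality,
\[
|\operatorname{Cov}|\le 2\int_0^{2\alpha}\cdots
\]
together with Rio's convention of defining $\alpha$ with a factor of $2$. Alternatively, I would note that $\alpha_{\mathcal F,B}$ is normalised as in expression (1.8a) of \cite{rio2017asymptotic}, under which the covariance bound holds without the extra factor. If the constant cannot be matched exactly, the bound still holds with $\lesssim$, which is all that the downstream Lemma \ref{lem:VarBlock.upper} needs.

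A second point to check is that Rio's inequality applies to the specific pair $(g(Z_i),g(Z_j))$ with $\alpha$ restricted to the $\sigma$-fields these variables generate. This is what justifies the class-restricted coefficient. One must also ensure measurability and finiteness of the quantile functions, which is automatic when $g\in L^2$; otherwise the bound is trivially infinite.
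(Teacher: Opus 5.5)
You follow the paper's own route: expand into pairwise covariances, group pairs by $d_n(i,j)$, apply Rio's covariance inequality, then Cauchy--Schwarz and a maximum over $i\in B$. Your treatment of the diagonal ($\operatorname{Var}(g(Z_i))\le\int_0^1 Q_{|g(Z_i)|}^2$, matching $\alpha_{\mathcal F,B}(0)=1$) and of pairs at infinite distance is fine.

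\textbf{The constant cannot be removed.} The factor you flag is a genuine problem, and neither proposed fix works. With the normalization (1.8a) from which $\alpha_{\mathcal F,B}$ is built, $\alpha(X,Y)=2\sup_{x,y}|\mathbb P(X>x,Y>y)-\mathbb P(X>x)\mathbb P(Y>y)|\le 1/2$. Rio's inequality then reads $|\operatorname{Cov}(X,Y)|\le 2\int_0^{\alpha(X,Y)}Q_{|X|}(u)Q_{|Y|}(u)\,du$. Rewriting it as $2\int_0^{2\alpha}$ with the unnormalized $\sigma$-field coefficient keeps the leading $2$.

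That $2$ cannot be dropped, and the lemma with constant $1$ is in fact false. Take $B=\{1,2\}$ with $d_n(1,2)=1$, $Z_1=Z_2=\varepsilon$ Rademacher, $\mathcal F=\{0,\mathrm{id}\}$ and $g=\mathrm{id}$. Then $\operatorname{Var}(2^{-1/2}(Z_1+Z_2))=2$. On the other side, $Q_{|g(Z_i)|}\equiv 1$, $|S_B(0)|=|S_B(1)|=2$ and $\alpha_{\mathcal F,B}(1)=1/2$, so $\|g\|_B^2=1+1/2=3/2<2$. Every coefficient of type (1.8a) is at most $1/2$, so enlarging $\mathcal F$ does not help.

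What your argument proves is $\operatorname{Var}(|B|^{-1/2}\sum_{i\in B}g(Z_i))\le 2\|g\|_B^2$, since only the off-diagonal terms pick up the factor. That is, the statement holds with $\sqrt2\,\|g\|_B$ on the right. The paper obtains constant $1$ only because its proof quotes Theorem 1.1 of \cite{rio2017asymptotic} without the factor $2$.

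\textbf{Your fallback is the correct result.} It is the true statement rather than a weakening, and it suffices downstream. Lemma~\ref{lem:VarBlock.upper} is stated with $\lesssim$. In the proof of Lemma~\ref{lem:bern}, the Bernstein variance term $2\operatorname{Var}(\delta_m g)$ is replaced by $4\|g\|_{B_m}^2$, which is exactly what $\operatorname{Var}\le 2\|g\|_B^2$ gives.

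\textbf{A further fix in Step 2.} The coefficient in Rio's inequality must be the half-line coefficient (1.8a), since that is what $\alpha_{\mathcal F,B}(\rho)$ dominates. The strong-mixing coefficient between $\sigma(g(Z_i))$ and $\sigma(g(Z_j))$ is not controlled by $\alpha_{\mathcal F,B}(\rho)$ in general. Moreover, the bound $2\int_0^{\alpha}$ with the unnormalized $\sigma$-field coefficient, as your Step 2 literally states it, is false: in the example above $\alpha=1/4$, and the bound gives $1/2<1$.
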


\begin{proof}
	See Appendix \ref{app:maximal}.
\end{proof}

We are now in position to state the lemma. 

\begin{lemma}\label{lem:maximal.general2}
	
Take any admissible sequence partition, $T^{\infty}(c,d) : = (T_{l}(c,d))_{l \in \mathbb N_0}$, any  partitions of the graph, $( \mathcal P_{M_{l}} : = \{ B_{1},...,B_{M_{l}}   \})_{l \in \mathbb{N}_0} $, and associated coloring sequence $(\mathcal C_{r_{l}}[ \mathcal P_{M_{l}} ])_{l \in \mathbb{N}_0} $. Suppose there exists a $u : \mathbb{N}_{0} \to \mathbb{R}_{+}$ and $\ell : \mathbb{N}_{0} \to \mathbb{R}_{+}$ such that $\sup_{f \in \mathcal{F}}  ||\Delta_{l} f||_{\infty} \leq u(l)$ and $\ell(l) \geq 2 d 2^{l/c}$ for all $l \in \mathbb{N}_{0}$. Then, for any $n$\footnote{The implicit constant in the $\lesssim$ can depend on $(c,d)$ but otherwise is universal.} 
\begin{align*}
	& \left \| \sup_{f_0,f\in\mathcal{F}}  \mathbb G_{n}(f - f_0 )   \right\|_{L^{1}(\mathbb P)} \\
	\lesssim & \sup_{f \in \mathcal{F}}  \sum_{l=1}^{\infty}   \left( \sqrt{\ell(l) K_{r_l}} \max_{m \in [M_{l}]} ||\Delta_{l} f ||_{B_m}    +   (\ell(l) K_{r_l} )  ||\Delta_{l} f ||_{\infty}   \frac{   \max_{m \in [M_{l}]} |B_{m}|   }{\sqrt{ n }}  + \sqrt{n} u(l) \bar{\tau}_{\operatorname{cone} \mathcal F }  \left(  \mathcal P_{M_{l}} ,  \mathcal C_{r_{l} }[ \mathcal P_{M_{l}} ]  \right)    \right). 
\end{align*}

\end{lemma}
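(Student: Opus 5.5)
We run a chaining argument along the admissible sequence $T^{\infty}(c,d)$ and replace the original process by the coupled one at each scale $l$ separately. For each $f$ let $\pi_l f$ be a fixed representative of $T(f,T_l)$ and $\Delta_l f := \pi_l f - \pi_{l-1} f$, so $f - \pi_0 f = \sum_{l\ge 1}\Delta_l f$; the limit is justified by a standard truncation at a finite level. Then
\[
\sup_{f_0,f}\mathbb G_n(f-f_0)\le 2\sum_{l\ge1}\sup_f|\mathbb G_n(\Delta_l f)|,
\]
and at most $|T_l||T_{l-1}|\le 2^{2d2^{l/c}}\le 2^{\ell(l)}$ distinct functions $\Delta_l f$ occur at level $l$. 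For each $l$ apply Theorem \ref{thm:coupling} with the partition $\mathcal P_{M_l}$, the coloring $\mathcal C_{r_l}$ and $\mathcal B = \operatorname{cone}\mathcal F$, which contains $\pm\Delta_l f/u(l)$. This gives a coupled process $Z^{\ast,l}$, and we split
\[
\mathbb G_n(\Delta_l f)=\mathbb G^{\ast,l}_n(\Delta_l f)+n^{-1/2}\sum_i\{\Delta_l f(Z_i)-\Delta_l f(Z^{\ast,l}_i)\},
\]
where $\mathbb G^{\ast,l}_n$ uses $Z^{\ast,l}$. The centering is unchanged because the block marginals are preserved.

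\textbf{Coupling term.} Since $\Delta_l f/u(l)\in\operatorname{cone}\mathcal F$, we have $|\Delta_l f(z_s)-\Delta_l f(z'_s)|\le u(l)\sup_{h\in\operatorname{cone}\mathcal F}|h(z_s)-h(z'_s)|$. Summing over the nodes of each block and using the definition \eqref{eq:block-pseudometric}, the supremum over $f$ of the error is at most $n^{-1/2}u(l)\sum_m d_{\mathcal B,|B_m|}(Z^\ast_{B_m},Z_{B_m})$. Its expectation equals $\sqrt n\,u(l)\bar\tau_{\operatorname{cone}\mathcal F}$ by \eqref{eqn:average.min.coupling}. A subtlety is whether the supremum over the cone can be taken inside; this is exactly why the pseudometric uses $\sup_{f\in\mathcal B}$ coordinatewise, so the bound is uniform in $f$.

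\textbf{Coupled term.} Write $\mathbb G^{\ast,l}_n=\sum_{k=1}^{K_{r_l}}n^{-1/2}\sum_{m\in C_k}Y_m(g)$ with $Y_m(g):=\sum_{i\in B_m}\{g(Z^\ast_i)-\mathbb E g(Z_i)\}$. Within a fixed color $k$ the $Y_m$ are independent, centered, bounded by $2|B_m|\|g\|_\infty$, and have variance $|B_m|\,\mathrm{Var}(|B_m|^{-1/2}\sum_{i\in B_m} g(Z_i))\le |B_m|\|g\|_{B_m}^2$ by Lemma \ref{lem:VarBlock.upper.0}, using equality in law of the blocks. Bernstein's inequality together with the standard maximal bound over $2^{\ell(l)}$ functions (via $\mathbb E\max\le$ the $\sqrt{\text{var}\cdot\log N}+\text{bound}\cdot\log N$ bound) gives, per color,
\[
\mathbb E\sup_f\Bigl|n^{-1/2}\sum_{m\in C_k}Y_m\Bigr|\lesssim \sqrt{\ell(l)}\,\Bigl(\tfrac{\sum_{m\in C_k}|B_m|}{n}\Bigr)^{1/2}\max_m\|\Delta_l f\|_{B_m}+\ell(l)\frac{\max_m|B_m|\,\|\Delta_l f\|_\infty}{\sqrt n}.
\]
Summing over $k$ and applying Cauchy–Schwarz, $\sum_k(\sum_{m\in C_k}|B_m|/n)^{1/2}\le\sqrt{K_{r_l}}$, and the second piece is multiplied by $K_{r_l}$. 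This yields the first two terms.

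\textbf{Main obstacle.} The main difficulty is pulling the $\sup_f$ out of the sum over $l$ to obtain the stated form $\sup_f\sum_l$ rather than $\sum_l\sup_f$. The norms $\|\Delta_l f\|_{B_m}$ depend on $f$, so a plain union bound is not enough. We would follow Talagrand/Pouzo (Lemma 3.1): apply Bernstein at deviation level $t\sqrt{\ell(l)}\,\mathrm{var}_l(f)+t\,\ell(l)\,\mathrm{bnd}_l(f)$ for each link $f$, union bound over the links with $\sum_l 2^{\ell(l)}e^{-t\ell(l)}$ summable for $t\ge 2$ (this is where $\ell(l)\ge 2d2^{l/c}$ is used), and integrate the tail in $t$. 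Because each level uses its own coupled process, this high-probability argument is carried out for the coupled sums at each level, with the coupling errors handled in expectation as above, and the pieces are then combined.
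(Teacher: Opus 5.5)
Your overall architecture matches the paper's: chaining along $T^\infty(c,d)$, a separate coupling at each level from Theorem~\ref{thm:coupling}, the coupling error controlled in expectation via \eqref{eqn:average.min.coupling}, Bernstein inside each color class, Cauchy--Schwarz over colors to produce $\sqrt{K_{r_l}}$, and a Pouzo-style high-probability argument to obtain the $\sup_f\sum_l$ form. The gap is in that last step.

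Bernstein is available only for a single color class $\mathbb G^{(k),\ast}_n$. The coupled blocks are independent within $C_k$ but not across colors, so for a fixed link $g=\Delta_l f$ you cannot apply Bernstein to the level-$l$ sum $\sum_k \mathbb G^{(k),\ast}_n(g)$. Correspondingly, your union bound $\sum_l 2^{\ell(l)}e^{-t\ell(l)}$ counts links and levels but not colors.

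If you add the colors to the union bound, the failure probability is multiplied by $K_{r_l}$. After integrating in $t$, this in general leaves an extra logarithmic factor in the $K_{r_l}$, or forces $\ell(l)\gtrsim\log K_{r_l}$. Neither is allowed by the statement, whose constant depends only on $(c,d)$. Chaining color by color and summing afterwards does not help either: it gives $\sum_k\sup_f\sum_l$, which matches the statement only when the partition and coloring do not change with $l$.

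The missing ingredient is the convexity step in Lemma~\ref{lem:bound.pr.talagrand}, which combines the color-wise tails without a union bound. Fix a link $g$ at level $l$ and set:
\begin{itemize}
\item $\sigma_k(g)^2:=\sum_{m\in C_k}\frac{|B_m|}{n}\|g\|_{B_m}^2$ and $b_l(g):=\max_m|B_m|\,\|g\|_\infty/\sqrt n$;
\item $A_{l,k}(g):=2\sqrt{\ell(l)}\,\sigma_k(g)+\tfrac23\ell(l)\,b_l(g)$ and $A_l(g):=\sum_k A_{l,k}(g)$;
\item $w_{l,k}:=A_{l,k}/A_l$ and $Y_{l,k}:=\bigl(|\mathbb G^{(k),\ast}_n(g)|/A_{l,k}(g)-1\bigr)_+$.
\end{itemize}
The color-wise Bernstein tail gives $\mathbb E^\ast[\exp\{\tfrac12\ell(l)Y_{l,k}\}]\le 3$, uniformly in $k$. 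Two facts then finish the combination: first, $\sum_k|\mathbb G^{(k),\ast}_n(g)|/A_l(g)\le 1+\sum_k w_{l,k}Y_{l,k}$; second, by convexity, $\exp\{\lambda\sum_k w_{l,k}Y_{l,k}\}\le\sum_k w_{l,k}\exp\{\lambda Y_{l,k}\}$. Markov's inequality with $\lambda=\ell(l)/2$ then yields
\[
\mathbb P^\ast\Bigl(\sum_{k=1}^{K_{r_l}}|\mathbb G^{(k),\ast}_n(g)|>v\,A_l(g)\Bigr)\le 3e^{-\ell(l)(v-1)/2}.
\]
This bound has no factor $K_{r_l}$ and needs no independence across colors. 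With it, your union bound over the at most $2^{\ell(l)}$ links per level, the sum over $l$, and the integration in $v$ go through as you describe.

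A smaller point, which the paper's own proof shares (it asserts $\Delta_l\mathcal F\subseteq\mathcal F$ before invoking Lemma~\ref{lem:tau.properties}). The function $\pm\Delta_l f/u(l)$ lies in $\operatorname{cone}(\mathcal F-\mathcal F)$, but not in general in $\operatorname{cone}\mathcal F$, which contains only positive rescalings of single elements of $\mathcal F$. Your coupling step therefore establishes the bound with $\bar\tau_{\operatorname{cone}(\mathcal F-\mathcal F)}$. That implies the stated bound when $\mathcal F-\mathcal F\subseteq\mathcal F$.
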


The proof of this lemma is relegated to section \ref{sec:proof.maximal.general2}. We now show Theorem \ref{thm:maximal} taking this lemma as given.

\subsubsection{Proof of Theorem \ref{thm:maximal}}

Take Lemma \ref{lem:maximal.general2} and specialize it to $f_0 =0 $ (which is valid as $0 \in \mathcal F$), $c = \min\{a,b\}$,   $\ell(l) = 2 d 2^{l/c}$ and $K : = K_{r}$, $r$, and $M$ chosen to be constant in $l$. Since $a,b \ge 1$, $c \ge 1$ and thus it follows that $\ell(l) \lesssim 2^{l}$ for all $l \in \mathbb{N}_{0}$. Then, with $q = \max_{m \in [M]} |B_{m}|$, 
\begin{align*}
	 \left \| \sup_{f\in\mathcal{F}} \mathbb G_{n}(f )   \right\|_{L^{1}(\mathbb P)} 	\lesssim &  \sqrt{K} \sup_{f \in \mathcal{F}}  \sum_{l=1}^{\infty}   2^{l/2} \max_{m \in [M]} ||\Delta_{l} f ||_{B_m}    + K  \frac{ q }{\sqrt{ n }} \sup_{f \in \mathcal{F}}  \sum_{l=1}^{\infty}  2^{l}  ||\Delta_{l} f ||_{\infty}  \\
	&   + \sqrt{n}  \bar{\tau}_{\operatorname{cone} \mathcal F } \left(  \mathcal P_{M} ,  \mathcal  C_{r}[ \mathcal P_{M} ]  \right)  \sum_{l=1}^{\infty} u(l).
\end{align*}

For any $B \subseteq N_{n}$, by \eqref{eqn:bound.Bnorm} in  the proof of Lemma \ref{lem:VarBlock.upper},
\begin{align*}
	||g||_{B}   \leq \Gamma_{p}(B)  \max_{i \in B}   || g ||_{L^{2p}(P_{Z_{i}}) }.
\end{align*}

Therefore, 
\begin{align*}
	\left \| \sup_{f\in\mathcal{F}} \mathbb G_{n}(f )   \right\|_{L^{1}(\mathbb P)} 	\lesssim &  \sqrt{K} \max_{m \in [M]} \Gamma_{p}(B_{m})  \sup_{f \in \mathcal{F}}  \sum_{l=1}^{\infty}   2^{l/2}  \max_{i \in N_{n}}  ||\Delta_{l} f ||_{L^{2p}(P_{Z_{i}})}    + K  \frac{  q  }{\sqrt{ n }} \sup_{f \in \mathcal{F}}  \sum_{l=1}^{\infty}  2^{l}  ||\Delta_{l} f ||_{\infty}  \\
	&   + \sqrt{n} \bar{\tau}_{\operatorname{cone} \mathcal F } \left(  \mathcal P_{M} ,  \mathcal C_{r}[ \mathcal P_{M} ]  \right)     \sum_{l=1}^{\infty} u(l),
\end{align*}

We now choose the partition $T^{\infty}(c,d)  : =  ( T_{l}(c,d) )_{l \in \mathbb{N}_{0}}$. Let $T^{\infty}(a) \in \mathcal T_{a}(\mathcal F)$ and $T^{\infty}(b) \in \mathcal T_{b}(\mathcal F)$ be some arbitrary partition sequences. For any $l \geq 1$, let $T_{l}(c,d)$ be a partition comprised of sets of the form $A \cap B$ for $A \in T_{l-1}(a)$ and $B \in T_{l-1}(b)$.  Under this choice, for any $l \in \mathbb{N}$, $|T_{l}(c,d)| \leq 2^{2^{(l-1)/a} + 2^{(l-1)/b}}$. For $c = \min \{a,b\}$, $2^{(l-1)/a} + 2^{(l-1)/b} = 2^{l/c} 2^{-1/c} ( 2^{(l-1)(1/a-1/c)}  + 2^{(l-1)(1/b-1/c)}   ) \leq 2^{l/c} 2^{1-1/c} = d 2^{l/c}$. Hence, $|T_{l}(c,d)| \leq 2^{d2^{l/c}}$ for all $l \in \mathbb{N}_{0}$. Hence, $T^{\infty}(c,d)$ is indeed an admissible sequence partition. This construction holds for arbitrary $T^{\infty}(a) \in \mathcal T_{a}(\mathcal F)$ and $T^{\infty}(b) \in \mathcal T_{b}(\mathcal F)$, we now choose them so as to be (approximate) minimizers of the Talagrand's Complexity Measure in Definition \ref{def:talagrand}, i.e., 
\begin{align*}
	\sup_{f \in \mathcal{F}}   \sum_{l=1}^{\infty} 2^{l/2}  \max_{i \in N_{n}}   || \Delta_l f ||_{L^{2p}(P_{Z_{i}}) } \leq & 1.5 \gamma_{2,a}(\mathcal F ,   \max_{i \in N_{n}}   || \Delta_l f ||_{L^{2p}(P_{Z_{i}}) } ), \\
	\sup_{f \in \mathcal{F}}   \sum_{l=1}^{\infty} 2^{l}    || \Delta_l f ||_{\infty} \leq & 1.5 \gamma_{1,b}(\mathcal F ,   ||\cdot||_{\infty}). 
\end{align*}

The last display implies that $||\Delta_l f||_{\infty} \leq 1.5 \frac{ \gamma_{1,b}(\mathcal F ,   ||\cdot||_{\infty}) }{2^{l}}$ for all $f \in \mathcal{F}$. Thus, we can choose $u(l) = 1.5 \frac{\gamma_{1,b} (\mathcal{F} , ||.||_{\infty})}{2^{l}}  $ for any $l \in \mathbb{N}_{0}$. 

Hence,
\begin{align*}
	\left \| \sup_{f\in\mathcal{F}} \mathbb G_{n}(f )   \right\|_{L^{1}(\mathbb P)} 	\lesssim &  \sqrt{K} \max_{m \in [M]} \Gamma_{p}(B_{m})  \gamma_{2,a} ( \mathcal{F} ,  \max_{i \in N_{n}} ||\cdot  ||_{L^{2p}(P_{Z_{i}})} )    \\
	& +    \left(   \frac{K  q}{\sqrt{n}}  + \sqrt{n} \bar{\tau}_{\operatorname{cone} \mathcal F } \left(  \mathcal P_{M} ,  \mathcal C_{r}[ \mathcal P_{M} ]  \right)     \sum_{l=0}^{\infty} 2^{-l} \right)  \gamma_{1,b}(\mathcal F ,   ||\cdot||_{\infty}) .
\end{align*}
and Theorem \ref{thm:maximal} follows.

\subsubsection{Proof of Lemma \ref{lem:maximal.general2}}
\label{sec:proof.maximal.general2}

For a given admissible partition sequence and for each $k \in \mathbb{N}_{0}$, and for each $f \in \mathcal F$, let $\pi_{k} f $ be an element of $T(f,T_{k})$, where, recall, $T(f,T_{k})$ is the unique set in $T_{k}$ containing $f$. Let $\Pi_{k} \mathcal F$ the set of all such elements and let $\Delta_{k} \mathcal F : = \Pi_{k} \mathcal F - \Pi_{k-1} \mathcal F$.

Henceforth, we use $\mathbb P^{\ast}$ and $\mathbb E^{\ast}$ to denote the probability measure and expectation corresponding to the coupled process $(Z_i^{\ast})_{i\in N_n}$.

Also, to simplify the notation, we use $K_{l} : = K_{r_{l}}$.

It follows that $\mathbb{G}_n(f - f_0)
	=
	\sum_{l=1}^{\infty}
	\mathbb{G}_n(\Delta_l f)$, 
where for each $l\geq 1$,
\begin{align*}
	\mathbb{G}_n(\Delta_l f)
	=
	\frac{1}{\sqrt{n}}
	\sum_{i\in N_n}
	\Big(
	\Delta_l f(Z_i)
	-
	\mathbb{E}[\Delta_l f(Z_i)]
	\Big).
\end{align*}

For any scale $l \in \mathbb{N}$,  let $\mathcal{P}_{M_{l}} = \{ B_{1} , \ldots , B_{M_l} \}$ be partition of $N_{n}$. Given this partition, we can split the sum in previous display into a sum within blocks and between blocks, i.e.,  
\begin{align*}
	\mathbb{G}_{n}(\Delta_l f) = \frac{1}{\sqrt{n}} \sum_{m=1}^{M_l}   U_{m}(\Delta_l f),
\end{align*}
where the within block $B_{m}$ sum is $f \mapsto 	U_{m}(f) : = \sum_{i \in B_{m}} \{ f(Z_{i}) - \mathbb E [f(Z_{i})]\}$.

Moreover, for any $r_l$-coloring, $\mathcal{C}_{r_l}[\mathcal{P}_{M_l}] = \{ C_{1} , \ldots, C_{K_l}  \}$, it follows that 
\begin{align*}
	\mathbb{G}_n(\Delta_l f) = \sum_{k=1}^{K_l} 	\mathbb{G}^{(k)}_n(\Delta_l f),~\text{where}~	\mathbb{G}^{(k)}_n(\Delta_l f) : =  \frac{1}{\sqrt{ | C_{k} |  } }  \sum_{m \in C_{k}}   \sqrt{\frac{  | C_{k} |  }{n} }  U_{m}(\Delta_{l}  f).
\end{align*}


Let $(Z^{\ast}_{i})_{i \in N_{n}}$ be the stochastic process from Theorem \ref{thm:coupling} associated to the $r$-coloring, $\mathcal{C}_{r_l}[\mathcal{P}_{N_{n},M}]$. Let $\mathbb{G}^{\ast}_{n} : \Delta_{l} \mathcal F \rightarrow \mathbb{R}$ be given by 
\begin{align*}
	\mathbb{G}^{\ast}_n(\Delta_l f) = \sum_{k=1}^{K_{l}} 	\mathbb{G}^{(k),\ast}_n(\Delta_l f),~\text{where}~	\mathbb{G}^{(k),\ast}_n(\Delta_l f) : =  \frac{1}{\sqrt{ | C_{k} |  } }  \sum_{m \in C_{k}}   \sqrt{\frac{  | C_{k} |  }{n} }  U^{\ast}_{m}(\Delta_{l}  f),
\end{align*}
with $f \mapsto U^{\ast}_{m}(f) : =  \sum_{i \in B_{m}}  \{ f(Z^{\ast}_{i}) - \mathbb E [f(Z^{\ast}_{i})]\}$ --- observe that for any $i \in B_{m}$, $Z^{\ast}_{i}$ has the same distribution as $Z_{i}$, so $\mathbb E^{\ast} [f(Z^{\ast}_{i})] = \mathbb E [f(Z_{i})]$.

By these calculation and the triangle inequality, 
\begin{align}\label{eqn:maximal-1}
	\left \| \sup_{f\in\mathcal{F}} \sum_{l=1}^{\infty} \mathbb G_{n}(\Delta_l f )   \right\|_{L^{1}(\mathbb P)} \leq 	\left \| \sup_{f\in\mathcal{F}} \sum_{l=1}^{\infty} \mathbb G^{\ast}_{n}(\Delta_l f )   \right\|_{L^{1}(\mathbb P^{\ast})}+ 	\left \| \sup_{f\in\mathcal{F}} \sum_{l=1}^{\infty} \{ \mathbb G_{n}(\Delta_l f )  - 	\mathbb{G}^{\ast}_n(\Delta_l f) \}   \right\|_{L^{1}(\Pr)}.
\end{align}

We now proceed to bound the two terms in the RHS of this expression individually.

\subsubsubsection{ Bound for $ 	\left \| \sup_{f\in\mathcal{F}} \sum_{l=1}^{\infty} \{ \mathbb G_{n}(\Delta_l f )  - 	\mathbb{G}^{\ast}_n(\Delta_l f) \}   \right\|_{L^{1}(\Pr)} $  }

Observe that for any $l \in \mathbb{N}$,
\begin{align*}
	\left \|  \sup_{f\in\mathcal{F}} \{	\mathbb{G}_n(\Delta_l f)  - 	\mathbb{G}^{\ast}_n(\Delta_l f) \}  \right \|_{L^{1}(\Pr)} = & \left \|  \sup_{f\in\mathcal{F}} \sum_{k=1}^{K_l} \{	\mathbb{G}^{(k)}_n(\Delta_l f)  - 	\mathbb{G}^{(k),\ast}_n(\Delta_l f) \} \right \|_{L^{1}(\mathbb P)} \\
	= &  \left \|  \sup_{f\in\mathcal{F}} n^{-1/2} \sum_{k=1}^{K_l} \sum_{m \in C_{k}} \{	U_{m}(\Delta_l f)  -  U^{\ast}_{m}(\Delta_l f) \}  \right \|_{L^{1}(\mathbb P)} \\
	\leq & n^{-1/2} \sum_{k=1}^{K_l} \sum_{m \in C_{k}}  \mathbb E_{\Pr} \left[    \sup_{h \in \Delta_{l} \mathcal{F}} 	\sum_{i \in B_{m}} |h(Z_{i})  - h(Z^{\ast}_{i}) | \right] \\
	= & n^{-1/2} \sum_{k=1}^{K_l} \sum_{m \in C_{k}} \mathbb E_{\Pr} \left[ d_{\Delta_{l} \mathcal{F},|B_{m}|}(Z_{B_{m}},Z^{\ast}_{B_{m}}) \right],
\end{align*}
because $d_{\Delta_{l} \mathcal{F}, |B_{m}|}(Z_{B_{m}},Z^{\ast}_{B_{m}}) =   \sup_{h \in \Delta_{l} \mathcal{F}} 	\sum_{i \in B_{m}} |h(Z_{i})  - h(Z^{\ast}_{i}) |$.
%

By Theorem \ref{thm:coupling}, 
\begin{align*}
n^{-1/2} \sum_{k=1}^{K_l} \sum_{m \in C_{k}} \mathbb E_{\Pr} \left[ d_{\Delta_{l} \mathcal{F},|B_{m}|}(Z_{B_{m}},Z^{\ast}_{B_{m}}) \right] \leq \sqrt{n} \bar{\tau}_{\Delta_{l} \mathcal F}(\mathcal P_{M_{l}},\mathcal{C}_{r_l}[\mathcal{P}_{M_l}] ),
\end{align*}
and thus 
\begin{align}\label{eqn:coupling.empirical.bound}
	\left \|  \sup_{f\in\mathcal{F}}  \{	\mathbb{G}_n(\Delta_l f)  - 	\mathbb{G}^{\ast}_n(\Delta_l f) \}  \right \|_{L^{1}(\Pr)} \leq  \sqrt{n} \bar{\tau}_{ \Delta_l \mathcal F }(\mathcal P_{M_{l}},\mathcal{C}_{r_l}[\mathcal{P}_{M_l}] )  
\end{align}

\subsubsubsection{ Bound for $\left \|  \sup_{f\in\mathcal{F}}	\sum_{l=1}^{\infty} \mathbb{G}^{\ast}_n(\Delta_l f)    \right \|_{L^{1}(\mathbb P^{\ast})}$  }

Observe that 
\begin{align*}
	\left \|  \sup_{f\in\mathcal{F}} \sum_{l=1}^{\infty}	\mathbb{G}^{\ast}_n(\Delta_l f)     \right \|_{L^{1}(\mathbb P^{\ast})} \leq \left\|  \sup_{f\in\mathcal{F}} \sum_{l=1}^{\infty}	\sum_{k=1}^{K_l}  \mathbb{G}^{(k),\ast}_n(\Delta_l f)     \right \|_{L^{1}(\mathbb P^{\ast})} = \left\|  \sup_{f\in\mathcal{F} } \sum_{l=1}^{\infty}	\sum_{k=1}^{K_l}    \sum_{m \in C_{k}}   \sqrt{  \frac{ 1  }{n} }  U^{\ast}_{m}(\Delta_{l}  f)     \right \|_{L^{1}(\mathbb P^{\ast})} 
\end{align*}

To bound this term we follow the steps in the proof of Lemma 3.1 in \cite{Pouzo2026} which rest on the results by \cite{talagrand2014}. For that we need to establish a Bernstein type inequality for the coupled color-class empirical process $g \mapsto  \frac{1}{\sqrt{ | C_{k} |  }}    \sum_{m \in C_{k}}   \sqrt{  \frac{  | C_{k} |  }{n} }  U^{\ast}_{m}(g)   $.

\begin{lemma}\label{lem:bern}
	Let $q : = \max_{m \in [M]} |B_{m}|$. For any fixed measurable function \(g \in \mathcal F - \mathcal F\) and any $n \in \mathbb{N}$ and  \(t>0\),
	\begin{align*}
		\mathbb P^{\ast} \!\left( 	\left|\mathbb G_{n}^{(k),\ast}(g)\right|  \ge t
		\right)
		\le  2\exp\left(
		-\frac{t^2}{
			4 \sum_{m\in\mathcal C_k} \frac{|B_{m}|}{n}  ||g||^{2}_{B_{m}}
			+\frac{2}{3}  \frac{q}{\sqrt{n}} ||g||_{\infty}   t   
		}
		\right).
	\end{align*}

	Equivalently, for every \(x>0\),
	\begin{align}
		\mathbb P^{\ast} \!\left(
		\left|\mathbb G_{n}^{(k),\ast}(g)\right|
		\ge
		2 \sqrt{x}  	\sqrt{  \sum_{m\in\mathcal C_k} \frac{|B_{m}|}{n}  ||g||^{2}_{B_{m}}   }
		+\frac{2}{3}  \frac{q}{\sqrt{n}}    ||g||_{\infty} x
		\right)
		\le 2e^{-x}.
	\end{align}
\end{lemma}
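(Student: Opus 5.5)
The plan is to reduce the statement to the classical Bernstein inequality for sums of independent, bounded, centered random variables. Fix $k$ and $g\in\mathcal F-\mathcal F$. Write
\[
\mathbb G_n^{(k),\ast}(g)=\sum_{m\in C_k} Y_m,\qquad Y_m:=n^{-1/2}U_m^\ast(g)=n^{-1/2}\sum_{i\in B_m}\{g(Z_i^\ast)-\mathbb E[g(Z_i)]\}.
\]
Theorem \ref{thm:coupling}, part 2, makes $(Z^\ast_{B_m})_{m\in C_k}$ mutually independent. Hence the $Y_m$, being measurable functions of distinct coupled blocks, are independent. Part 1 gives $Z^\ast_{B_m}\stackrel{d}{=}Z_{B_m}$, so each $Y_m$ is centered.

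\textbf{Almost-sure bound.} Since $|g(z)-\mathbb E g(Z_i)|\le 2\|g\|_\infty$, we have $|Y_m|\le 2|B_m|\|g\|_\infty/\sqrt n\le 2q\|g\|_\infty/\sqrt n$. The constant in front of the linear term in the statement is $\tfrac23$ with $q/\sqrt n$. To match it, I would use the Bernstein form $\exp(-t^2/(2\sigma^2+\tfrac{2}{3}bt))$ with $b$ the a.s. bound, and then absorb the factor $2$ by rescaling, i.e. writing $4\sum\mathrm{Var}$ in place of $2\sigma^2$. Alternatively, I would simply note that these constants are immaterial up to the universal constants that later enter through $\lesssim$, and track them loosely. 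If the exact constant is required, I would bound $|g(z)-\mathbb Eg|\le \|g\|_\infty$ using the oscillation bound instead, since $g\in\mathcal F-\mathcal F$ has range of width at most $2\|g\|_\infty$ and centering can be done at the midpoint for the one-sided moment bound in Bernstein's lemma.

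\textbf{Variance term.} Distributional equality of blocks gives
\[
\mathrm{Var}(Y_m)=\frac{|B_m|}{n}\,\mathrm{Var}\Big(|B_m|^{-1/2}\sum_{i\in B_m}g(Z_i)\Big)\le \frac{|B_m|}{n}\|g\|_{B_m}^2
\]
by Lemma \ref{lem:VarBlock.upper.0}. This is the crucial link: the variance of the coupled block equals that of the original block, so within-block dependence enters only through $\|\cdot\|_{B_m}$. Summing over $m\in C_k$ gives $\sigma^2\le\sum_{m\in C_k}\frac{|B_m|}{n}\|g\|_{B_m}^2$.

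\textbf{Bernstein and inversion.} Applying Bernstein to both tails yields the factor $2$ and the first display. For the equivalent form, I would set the exponent equal to $x$ and solve the quadratic $t^2=x(4\sigma^2+\tfrac23 bt)$. This gives $t\le 2\sqrt{x}\sigma+\tfrac23 bx$ via $\sqrt{A+B}\le\sqrt A+\sqrt B$, and monotonicity of the tail bound then concludes.

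The main obstacle is the bookkeeping of constants in the boundedness term. The substantive issue, independence and variance control under $\mathbb P^\ast$, is delivered directly by Theorem \ref{thm:coupling} and Lemma \ref{lem:VarBlock.upper.0}.
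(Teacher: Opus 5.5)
Your argument follows the paper's proof. Theorem~\ref{thm:coupling} gives independence and centering of the coupled blocks, Lemma~\ref{lem:VarBlock.upper.0} bounds the block variance, then comes the two-sided Bernstein inequality and inversion of the exponent; your quadratic-root computation is correct. The problem is confined to the coefficient of the linear term, and two of your three remedies for it do not work.

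\textbf{Absorbing the factor $2$ into the variance term.} Write $V:=\sum_{m\in C_k}\operatorname{Var}(Y_m)\le\bar\sigma^2:=\sum_{m\in C_k}\frac{|B_m|}{n}\|g\|_{B_m}^2$ and $b:=q\|g\|_\infty/\sqrt n$. Bernstein with your almost-sure bound $|Y_m|\le 2b$ gives the denominator $2V+\tfrac43 bt$. The inequality $2V+\tfrac43 bt\le 4\bar\sigma^2+\tfrac23 bt$ holds only when $t\le 3(2\bar\sigma^2-V)/b$. So doubling the variance coefficient cannot absorb the doubled linear coefficient for all $t>0$.

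\textbf{The bound $|g(z)-\mathbb E g(Z_i)|\le\|g\|_\infty$.} This is false. If $g=\|g\|_\infty$ with probability $p<1/2$ and $g=-\|g\|_\infty$ otherwise, then $g-\mathbb E g=2(1-p)\|g\|_\infty$ with probability $p$. Centering at the midpoint $0$ of $[-\|g\|_\infty,\|g\|_\infty]$ does give summands bounded by $b$. However, the mgf argument then involves the uncentered second moment $\mathbb E\bigl[(n^{-1/2}\sum_{i\in B_m}g(Z_i))^2\bigr]$. This contains $n^{-1}\bigl(\sum_{i\in B_m}\mathbb E g(Z_i)\bigr)^2$, which can be of order $|B_m|^2\|g\|_\infty^2/n$. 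It is not controlled by $\frac{|B_m|}{n}\|g\|_{B_m}^2$, for example when the observations within a block are independent. So this remedy does not give the stated bound beyond singleton blocks.

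What the argument actually yields is the lemma with $\tfrac43\frac{q}{\sqrt n}\|g\|_\infty$ in place of $\tfrac23\frac{q}{\sqrt n}\|g\|_\infty$, and correspondingly $\tfrac43$ in the inverted form. The paper's proof has the same slip. It records $\|\delta^\ast_m g\|_\infty\le 2\sqrt q\,\|g\|_\infty$ but then puts $\tfrac23\sqrt n\,q\|g\|_\infty t$ into Bernstein's denominator, dropping the factor $2$. Lemma~\ref{lem:bound.pr.talagrand} and Theorem~\ref{thm:maximal} use this inequality only up to universal constants. Your second option, carrying the constant loosely, is therefore the right resolution, and you should drop the other two.
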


\begin{proof}
	See Appendix \ref{app:maximal}. 
\end{proof}

We can now establish the following bound for the empirical process $\mathbb{G}^{(k),\ast}_{n}$.
\begin{lemma}\label{lem:bound.pr.talagrand}
	 Let $T^{\infty}(c,d) : = (T_{l}(c,d))_{l \in \mathbb N_0}$ be a sequence of admissible partitions of $\mathcal F$ such that $|T_{0}(c,d)| = 1$ and $|T_{l}(c,d)| \leq 2^{d2^{l/c}}$ for some $c,d \geq 1$. Suppose there exists a $u : \mathbb{N}_{0} \to \mathbb{R}_{+}$ and $\ell : \mathbb{N}_{0} \to \mathbb{R}_{+}$ such that $\sup_{f \in \mathcal{F}}  ||\Delta_{l} f||_{\infty} \leq u(l)$ and $\ell(l) \geq 2 d 2^{l/c}$ for all $l \in \mathbb{N}_{0}$. Then,\footnote{The implicit constant in the $\lesssim$ can depend on $(c,d)$ but otherwise is universal.} 
	\begin{align*}
		&\mathbb P^{\ast} \left(  \sup_{f \in \mathcal{F}}  \sum_{l=1}^{\infty} \sum_{k=1}^{K_l} |\mathbb G^{(k),\ast}_{n}(\Delta_{l} f) | \leq v   \sup_{f \in \mathcal{F}}  \sum_{l=1}^{\infty}  \sum_{k=1}^{K_l}  \left( 2  	\sqrt{  \sum_{m\in\mathcal C_k} \frac{|B_{m}|}{n}  ||\Delta_l f||^{2}_{B_{m}}   } \ell(l)   + \frac{2}{3} \frac{ \max_{m \in [M_l]} |B_{m}| }{\sqrt{ n }} ||\Delta_l f||_{\infty} (\ell(l)  )^{2}    \right)   \right) \\
		& \geq 1 - C \left(  \sum_{l=0}^{\infty} 2^{  2d 2^{l/c}  }  e^{- \ell(l)^{2} } \right) e^{-0.25 v}
	\end{align*}
	for any $v \geq 4$ and some universal finite constant $C$.\footnote{Observe that under the choice of $(c,d,\ell)$, the quantity $\sum_{l=0}^{\infty} 2^{  2d 2^{l/c}  }  e^{- 2\ell(l)^{2} }$ is finite.}  
\end{lemma}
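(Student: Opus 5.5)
This is a standard generic-chaining union bound built on the Bernstein inequality of Lemma \ref{lem:bern}. For each scale $l\ge1$ and color $k\in[K_l]$, the increments $\Delta_l f=\pi_l f-\pi_{l-1}f$ take at most $|T_l(c,d)|\,|T_{l-1}(c,d)|\le 2^{2d2^{l/c}}$ distinct values as $f$ ranges over $\mathcal F$, since $T^\infty(c,d)$ is increasing. Each such value is a fixed function in $\mathcal F-\mathcal F$, so Lemma \ref{lem:bern} applies to it. We apply the lemma with $x=x_l(v):=\ell(l)^2+v/4$, rather than $\ell(l)^2$ alone, so that the failure probability splits as
\[
2e^{-x_l(v)}=2e^{-\ell(l)^2}e^{-v/4}.
\]
For fixed $(l,k,g)$ with $g=\Delta_l f$, the deviation threshold is $2\sqrt{x_l(v)}\,\sigma_{k,l}(g)+\tfrac23\,\tfrac{q_l}{\sqrt n}\|g\|_\infty\, x_l(v)$. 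Here $\sigma_{k,l}(g)^2:=\sum_{m\in C_k}\tfrac{|B_m|}{n}\|g\|_{B_m}^2$ and $q_l:=\max_{m\in[M_l]}|B_m|$.

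The key step is to dominate this threshold by $v$ times the summand in the statement. We use $\ell(l)\ge 2d2^{l/c}\ge 2$ and $v\ge4$. For the first term, $\sqrt{\ell(l)^2+v/4}\le \ell(l)+\sqrt v/2\le \ell(l)\,(1+\sqrt v/4)\le v\,\ell(l)$. For the second, $\ell(l)^2+v/4\le \ell(l)^2(1+v/16)\le v\,\ell(l)^2$. Hence on the complement of the bad event,
\[
|\mathbb G_n^{(k),\ast}(\Delta_l f)|\le v\Big(2\sigma_{k,l}(\Delta_l f)\,\ell(l)+\tfrac23\,\tfrac{q_l}{\sqrt n}\|\Delta_l f\|_\infty\,\ell(l)^2\Big)
\]
holds simultaneously for all $f$, $l$ and $k$. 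Summing over $k$ and $l$ and taking the supremum over $f$ then gives the event in the statement.

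It remains to bound the total failure probability by a union bound over all triples $(l,k,g)$. This gives
\[
\sum_{l\ge1}\sum_{k=1}^{K_l} 2^{2d2^{l/c}}\cdot 2e^{-\ell(l)^2}e^{-v/4}.
\]
The factor $K_l$ is the main obstacle, since the statement has no $K_l$ in the constant. The way around it is that the $k$-sum must not be paid by a crude union over colors; instead the cardinality count should absorb it. I would handle this by redefining the chaining cells at scale $l$ as pairs (color, cell), i.e.\ noting $K_l\le M_l\le n$, and folding it in. If that does not close, I would absorb $K_l$ into the constant by strengthening $x_l$ to $\ell(l)^2+\log K_l+v/4$; the extra $\sqrt{\log K_l}$ is dominated since the summand already carries the factor $\sum_k$. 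Failing both, I would accept a constant $C$ depending on $\sup_l K_l e^{-\ell(l)^2/2}$, which is harmless when $K_l$ is constant in $l$, as in the proof of Theorem \ref{thm:maximal}.

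With $K_l$ handled, the remaining sum $\sum_l 2^{2d2^{l/c}}e^{-\ell(l)^2}$ converges: since $\ell(l)^2\ge 4d^2 4^{l/c}$, it dominates $2d2^{l/c}\log 2$. This yields the bound $1-C\big(\sum_l 2^{2d2^{l/c}}e^{-\ell(l)^2}\big)e^{-0.25v}$.
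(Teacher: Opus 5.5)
Your per-triple step is sound: the choice $x=\ell(l)^2+v/4$ and both domination inequalities (using $\ell(l)\ge 2$ and $v\ge 4$) check out. The gap is where you locate it. A union bound over $(l,k,g)$ leaves a factor $K_l$ in the failure probability, and none of your three remedies removes it.

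\emph{Remedy 1.} Treating (color, cell) pairs as chaining cells multiplies the count $2^{2d2^{l/c}}$ by $K_l$. That factor can be of order $n$ (the directed-dyad application uses $2(n-1)$ colors), so $C$ is no longer universal.

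\emph{Remedy 2.} Adding $\log K_l$ to $x_l$ turns the range term into $\tfrac23\tfrac{q_l}{\sqrt n}\|g\|_\infty(\ell(l)^2+\log K_l+v/4)$. This exceeds $v$ times the allowed $\tfrac23\tfrac{q_l}{\sqrt n}\|g\|_\infty\ell(l)^2$ as soon as $\log K_l>(v-1)\ell(l)^2$, for instance at $l=1$ when $K_1$ grows with $n$. Summing over $k$ gives no slack, because every summand of the target $v\sum_k A_{l,k}(g)$ is inflated.

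\emph{Remedy 3.} A constant depending on $K_l$ is not what the lemma asserts. It is also not harmless when $K_l\equiv K$ is constant in $l$: integrating a tail $CKe^{-v/4}$ to obtain the $L^1$ bound in Lemma \ref{lem:maximal.general2} costs an extra $\log K$ factor (e.g.\ $\log n$), which would weaken Theorem \ref{thm:maximal}.

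The missing idea is to control the \emph{sum} over colors by convexity, instead of requiring the deviation bound for every color at once. This is how the paper's proof proceeds.

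Fix $l$ and $g\in\Delta_l\mathcal F$. Set $A_{l,k}(g):=2\sigma_{k,l}(g)\ell(l)+\tfrac23\tfrac{q_l}{\sqrt n}\|g\|_\infty\ell(l)^2$ and $A_l(g):=\sum_k A_{l,k}(g)$.

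Lemma \ref{lem:bern} with $x=t\ell(l)^2$, $t\ge1$, together with $\sqrt t\le t$, gives $\mathbb P^{\ast}(|\mathbb G^{(k),\ast}_n(g)|>tA_{l,k}(g))\le 2e^{-t\ell(l)^2}$. Hence the normalized excess $Y_{l,k}(g):=(|\mathbb G^{(k),\ast}_n(g)|/A_{l,k}(g)-1)_+$ satisfies $\mathbb E^{\ast}[\exp\{\tfrac12\ell(l)^2Y_{l,k}(g)\}]\le 3$.

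Take the deterministic weights $w_{l,k}(g):=A_{l,k}(g)/A_l(g)$. Then $\sum_k|\mathbb G^{(k),\ast}_n(g)|/A_l(g)\le 1+\sum_k w_{l,k}(g)Y_{l,k}(g)$. Convexity of the exponential gives $\mathbb E^{\ast}[\exp\{\tfrac12\ell(l)^2\sum_k w_{l,k}(g)Y_{l,k}(g)\}]\le 3$. No independence across color classes is needed, which matters because the coupled color classes are dependent on one another.

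Markov's inequality then yields $\mathbb P^{\ast}(\sum_k|\mathbb G^{(k),\ast}_n(g)|>vA_l(g))\le 3e^{-\ell(l)^2(v-1)/2}$, which is free of $K_l$. Only now take the union bound over the at most $2^{2d2^{l/c}}$ increments and over $l$. Since $\tfrac12\ell(l)^2(v-1)\ge \ell(l)^2+v/4-\tfrac12$ for $v\ge4$, this gives exactly the stated bound.
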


\begin{proof}
	See Appendix \ref{app:maximal}.
\end{proof}

An immediate implication of this result is that 
\begin{align*}
	\left \|  \sup_{f \in \mathcal{F}}  \sum_{l=1}^{\infty} \sum_{k=1}^{K_l} |\mathbb G^{(k),\ast}_{n}[\Delta_{l} f] |    \right \|_{L^{1}(\mathbb P^{\ast})} \lesssim & \sup_{f \in \mathcal{F}}  \sum_{l=1}^{\infty}  \sum_{k=1}^{K_l}  \left(  	\sqrt{  \sum_{m\in\mathcal C_k} \frac{|B_{m}|}{n}  ||\Delta_l f ||^{2}_{B_{m}}   } \ell(l)   + \frac{ \max_{m \in [M_l]} |B_{m}|  }{\sqrt{ n }}    ||\Delta_l f||_{\infty}  (\ell(l)  )^{2}    \right) \\
	\simeq & \sup_{f \in \mathcal{F}}  \sum_{l=1}^{\infty}   \left( \ell(l) \sum_{k=1}^{K_l}   	\sqrt{  \sum_{m\in\mathcal C_k} \frac{|B_{m}|}{n}  ||\Delta_l f ||^{2}_{B_{m}}   }   +  K_l \frac{ \max_{m \in [M_l]} |B_{m}|   }{\sqrt{ n }}    ||\Delta_l f||_{\infty} (\ell(l)  )^{2}    \right).
\end{align*}

Observe that
\begin{align*}
	\sum_{k=1}^{K_l}  \sqrt{  \sum_{m\in\mathcal C_k} \frac{|B_{m}|}{n}  ||\Delta_l f||^{2}_{B_{m}}   } \leq K_l \sqrt{ K_l^{-1} \sum_{k=1}^{K_l}  \sum_{m\in\mathcal C_k} \frac{|B_{m}|}{n}  ||\Delta_l f||^{2}_{B_{m}}    } \leq \sup_{m \in [M_l]} ||\Delta_l f ||_{B_{m}}   \sqrt{K_l} 
\end{align*}
where the first inequality follows by Jensen's inequality and the second follows because $\sum_{k=1}^{K_l} \sum_{m \in \mathcal C_{k}} |B_{m}| = n$.

Therefore, 
\begin{align}\label{eqn:bound.tala}
	\left \|  \sup_{f \in \mathcal{F}}  \sum_{l=1}^{\infty} \sum_{k=1}^{K_l} |\mathbb G^{(k),\ast}_{n}[\Delta_{l} f] |    \right \|_{L^{1}(\mathbb P^{\ast})} \lesssim  \sup_{f \in \mathcal{F}}  \sum_{l=1}^{\infty}   \left( \ell(l) \sup_{m \in [M]} || \Delta_{l} f ||_{B_{m}} \sqrt{K_l}  +  K_l \frac{\max_{m \in [M]} |B_{m}|    }{\sqrt{ n }}    ||\Delta_l f||_{\infty}   (\ell(l)  )^{2}    \right).
\end{align}

\subsubsubsection{Bound on $\left \| \sup_{f\in\mathcal{F}} \sum_{l=1}^{\infty} \mathbb G_{n}(\Delta_l f )   \right\|_{L^{1}(\mathbb P)} $}

By combining \eqref{eqn:coupling.empirical.bound} and \eqref{eqn:bound.tala} with \eqref{eqn:maximal-1}, it follows that
\begin{align*}
	& \left \| \sup_{f\in\mathcal{F}} \sum_{l=1}^{\infty} \mathbb G_{n}(\Delta_l f )   \right\|_{L^{1}(\mathbb P)} \\
	\lesssim & \sup_{f \in \mathcal{F}}  \sum_{l=1}^{\infty}  \left(   \sup_{m \in [M_l]} || \Delta_{l} f ||_{B_{m}} \sqrt{K_l}     \ell(l)   +  ||\Delta_l f||_{\infty}   \frac{  \max_{m \in [M_l]} |B_{m}|  }{\sqrt{ n }}  K_l  (\ell(l)  )^{2} + \sqrt{n} \bar{\tau}_{ \Delta_l \mathcal F } (\mathcal P_{M_{l}} , \mathcal C_{r_{l}}[\mathcal P_{M_{l}}] )   \right). 
\end{align*}

Suppose there exists a $u : \mathbb{N}_{0} \to \mathbb{R}_{+}$ such that $\sup_{f \in \mathcal{F}}  ||\Delta_{l} f||_{\infty} \leq u(l)$ for all $l \in \mathbb{N}_{0}$. By Lemma \ref{lem:tau.properties} and the fact that $\Delta_{l} \mathcal F = \Pi_{l} \mathcal F - \Pi_{l-1} \mathcal F$ with $\Pi_{l}\mathcal F, \Pi_{l-1} \mathcal F \subseteq \mathcal F$, and $\Delta_l \mathcal F \subseteq \mathcal F$ and $u(l) \geq ||f||_{\infty}$ for all $f \in \Delta_{l} \mathcal F$, 
\begin{align*}
    \bar{\tau}_{ \Delta_l \mathcal F } (\mathcal P_{M_{l}} , \mathcal C_{r_{l}}[\mathcal P_{M_{l}}] )  \leq 2 u(l) \bar{\tau}_{ \operatorname{cone} \mathcal F } (\mathcal P_{M_{l}} , \mathcal C_{r_{l}}[\mathcal P_{M_{l}}] ). 
\end{align*}
Hence, setting $\ell(l) = 2^{l/2}$,
\begin{align}\notag
	& \left \| \sup_{f\in\mathcal{F}} \sum_{l=1}^{\infty} \mathbb G_{n}(\Delta_l f )   \right\|_{L^{1}(\mathbb P)} \\
	\lesssim & \sup_{f \in \mathcal{F}}  \sum_{l=1}^{\infty}   \left( (2^{l}K_l)^{1/2} \max_{m \in [M_{l}]} ||\Delta_{l} f ||_{B_m}    +    (2^{l} K_{l})    \frac{   \max_{m \in [M_l]} |B_{m}|   }{\sqrt{ n }}    ||\Delta_l f||_{\infty} + \sqrt{n}  \bar{\tau}_{\operatorname{cone} \mathcal F }(\mathcal P_{M_{l}} , \mathcal C_{r_{l}}[\mathcal P_{M_{l}}] )  u(l)   \right). \label{eqn:general.bound.1}
\end{align}

Thus the desired result is shown.

\subsection{Proof of Theorem \ref{thm:single-scale-network-gc}}
\label{sec:proofGC}

	Fix \(n\). Let $f \mapsto P^{0}_{n}f : = n^{-1} \sum_{i \in N_{n}} \mathbb E [f(Z_{i})] $. For every \(f\in\mathcal F\), let \(C(f)\in T_{L_n}\) be the
	cell containing \(f\). Then
	\begin{align*}
		(P_n-P_n^0)f
		=
		(P_n-P_n^0)\pi_{L_n}f
		+
		(P_n-P_n^0)(f-\pi_{L_n}f).
	\end{align*}
	Taking the supremum over \(f\in\mathcal F\), we obtain
	\begin{align}
		\sup_{f\in\mathcal F}
		|(P_n-P_n^0)f|
		\le
		\max_{\pi\in\Pi_{L_n}}
		|(P_n-P_n^0)\pi| +
		\sup_{f\in\mathcal F}
		|(P_n-P_n^0)(f-\pi_{L_n}f)|.
		\label{eq:ssgc-basic-decomp}
	\end{align}
	
	We control the first term by Theorem~\ref{thm:maximal}. Applying the theorem with $\mathcal F = \Pi_{L_{n}}$ and then dividing by
	\(\sqrt n\), yields
	\begin{align}
		\mathbb E
		\left[
		\max_{\pi\in\Pi_{L_n}}
		|(P_n-P_n^0)\pi|
		\right]
		&\lesssim
		\frac{
			\sqrt{K_{r_{n}}}\max_m\Gamma_p(B_m)
			\gamma_{2,a}
			\left(
			\Pi_{L_n},
			\max_i\|\cdot\|_{L^{2p}(P_{Z_i})}
			\right)
		}{\sqrt n}
		\nonumber\\
		&\quad+
		\left(
		\frac{K_{r_{n}} q_n}{n}
		+
		\bar{\tau}_{\operatorname{cone} \Pi[T_{L_{n}}] }
		\left( 	\mathcal P_{n}, 	\mathcal C_{r_{n}}[\mathcal P_{n} ] \right) 
		\right)
		\gamma_{1,b}
		\left(
		\Pi_{L_n},
		\|\cdot\|_{\infty}
		\right).\label{eqn:GC.bound.0}
	\end{align}
	
	We now control the second term in
	\eqref{eq:ssgc-basic-decomp}. If \(f\in C\), then $|f(z)-\pi_{L_n}f(z)|
	\le
	D_C(z)$, $z\in \mathsf Z$. 
	Therefore,
	\begin{align*}
		|(P_n-P_n^0)(f-\pi_{L_n}f)|
		&\le
		P_n|f-\pi_{L_n}f|
		+
		P_n^0|f-\pi_{L_n}f|
		\le
		P_n D_C + P_n^0 D_C. 
	\end{align*}
	Taking suprema over \(f\in\mathcal F\), equivalently over
	\(C\in T_{L_n}\), gives
	\begin{align}
		\mathbb E \left[   	\sup_{f\in\mathcal F}
		|(P_n-P_n^0)(f-\pi_{L_n}f)| \right] 
		&\le \frac{2}{n} \sum_{i \in N_{n}}     \mathbb  E \left[  
		\sup_{C\in T_{L_n}}
		|D_C(Z_{i})| \right] 
		\label{eq:ssgc-diameter-bound}
	\end{align}
	
	If we show that the RHS of \eqref{eqn:GC.bound.0} and  \eqref{eq:ssgc-diameter-bound} vanish as $n$ diverges, the desired result follows by the Markov inequality. 
	
	By assumption, the RHS of  \eqref{eq:ssgc-diameter-bound}  vanishes as $n$ diverges, thus, it only remains to control the RHS of  \eqref{eqn:GC.bound.0}.  By Lemma \ref{lem:finite-class-gamma-bounds} with $T = \Pi_{L_{n}}$,
	{\small{\begin{align*}
		&	\frac{
			\sqrt{K_{r_{n}} }\max_m\Gamma_p(B_m)
			\gamma_{2,a}
			\left(  \Pi_{L_{n}}  ,
			\max_i\|\cdot\|_{L^{2p}(P_{Z_i})}
			\right)
		}{\sqrt n} \leq  \frac{
			\sqrt{K_{r_{n}} }\max_m\Gamma_p(B_m) 2^{[a L_{n}]/2} \operatorname{diam}(\Pi_{L_{n}} , \max_{i \in N_{n}} ||.||_{L^{2p}(P_{Z_{i}})})
		}{\sqrt n},  \\
		&	\left(
		\frac{K_{r_{n}} q_n}{n}
		+
		\bar{\tau}_{\operatorname{cone} \Pi[T_{L_{n}}] }
		\left( 	\mathcal P_{n}, 	\mathcal C_{r_{n}}[\mathcal P_{n} ] \right) 
		\right)
		\gamma_{1,b}
		\left(  \Pi_{L_{n}} ,
		\|\cdot\|_{\infty}
		\right) \leq  	\left(
		\frac{K_{r_{n}} q_n}{n}
		+
		\bar{\tau}_{\operatorname{cone} \Pi[T_{L_{n}}] }
		\left( 	\mathcal P_{n}, 	\mathcal C_{r_{n}}[\mathcal P_{n} ] \right) 
		\right) 2^{[b L_{n}]} \operatorname{diam}(\Pi_{L_{n}} ,  ||.||_{\infty}).
	\end{align*}}}
	
	Since $\sup_{ f \in \mathcal F } ||f||_{\infty} < \infty$, $\sup_{n \in \mathbb{N}} \operatorname{diam}(\Pi_{L_{n}} , \max_{i \in N_{n}} ||.||_{L^{2p}(P_{Z_{i}})} ) \leq \operatorname{diam}(\Pi_{L_{n}} ,  ||.||_{\infty}) < \infty$. Thus, the RHS in the previous display vanishes as $n$ diverges by condition in \eqref{eq:ssgc-net-gamma1}. 

    \subsection{Proof of Corollary \ref{cor:finite-dependency-graph}}
    \label{sec:dep-graph.proof}

    \begin{proof}[Proof of Corollary \ref{cor:finite-dependency-graph}]
Apply Theorem~\ref{thm:maximal} to the symmetrized class $\mathcal F_{\pm}
    :=
    \mathcal F\cup(-\mathcal F)$, 
and take the singleton partition  $\mathcal P_n
    :=
    \bigl\{\{i\}:i\in N_n\bigr\}$. A proper coloring of \(\mathfrak D_{n} \) partitions \(N_n\) into
\(K_1=\chi(\mathfrak D_{n})\) independent sets. Within each color class, every
variable is independent of the sigma-field generated by its
predecessors. It follows that the coupling error vanishes: $ \overline{\tau}_{\operatorname{cone}\mathcal F_{\pm}}
    \bigl(
        \mathcal P_n,
        \mathcal C_1[\mathcal P_n]
    \bigr)
    =0$. 
Moreover, because every block is a singleton, $\max_{B\in\mathcal P_n}|B|=
    \max_{B\in\mathcal P_n}\Gamma_1(B)=1$. 
Theorem~\ref{thm:maximal} therefore yields
\begin{align*}
    \left\|
        \sup_{f\in\mathcal F_{\pm}}
        \mathbb G_n(f)
    \right\|_{L^1}
    \lesssim
    \sqrt{\chi(\mathfrak D_{n})}\,
    \gamma_{2,a}
    \left(
        \mathcal F_{\pm},
        \max_{i\in N_n}
        \|\cdot\|_{L^2(P_{Z_i})}
    \right)
    +
    \frac{\chi(\mathfrak D_{n})}{\sqrt n}\,
    \gamma_{1,b}
    \left(
        \mathcal F_{\pm},
        \|\cdot\|_\infty
    \right).
\end{align*}
Since \(|\mathcal F_{\pm}|\leq 2J\), the standard finite-class
bounds for the generic chaining functionals imply
\begin{align*}
    \gamma_{2,a}
    \left(
        \mathcal F_{\pm},
        \max_{i\in N_n}\|\cdot\|_{L^2(P_{Z_i})}
    \right)
    &\lesssim
    \sigma\sqrt{\log(2J)}~and~
    \gamma_{1,b}
    \left(
        \mathcal F_{\pm},
        \|\cdot\|_\infty
    \right)
    \lesssim
    b\log(2J).
\end{align*}
Dividing the resulting bound by \(\sqrt n\) gives
\eqref{eq:finite-dependency-graph}. 
\end{proof}

\section{Appendix for Coupling Results}
\label{app:coupling}


The next lemma contains useful properties of the $\tau$-coefficient defined in Section \ref{sec:mixing}. In the lemma, let $\boldsymbol{\tau}_{\mathcal B}$ denote any of the three $\tau$-coefficient defined in \eqref{eq:average-coloring-adapted-tau}, \eqref{eq:max-coloring-adapted-tau}, and \eqref{eq:global-graph-tau}, respectively.

For any class of $\mathsf{Z}$ of real-valued measurable functions, $\mathcal B$, let  $\operatorname{cone} \mathcal B  : = \{ \lambda f \colon ||\lambda f ||_{\infty} \leq 1,~f \in \mathcal B~and~ \lambda > 0 \}$. 

\begin{lemma}\label{lem:tau.properties} 
	For any $\mathcal B \subseteq \mathcal F$, and partition $\mathcal P_{M}$ and any associated proper $r$-coloring $\mathcal C_{r}$ the following statements are true.
	\begin{enumerate}
		\item $\boldsymbol{\tau}_{\mathcal B } \leq \sup_{f \in \mathcal B} ||f||_{\infty} \boldsymbol{\tau}_{\operatorname{cone} \mathcal B }$, for any $r,q \in \mathbb{N}$. 
		\item $q \mapsto \tau_{  \mathcal B }(r,q)$ is non-decreasing.
		\item $\mathcal B \mapsto \boldsymbol{\tau}_{\mathcal B }$ is non-decreasing (under the inclusion ordering).
        \item For any $\mathcal B' \subseteq \mathcal F$, $\boldsymbol{\tau}_{\mathcal B - \mathcal B' } \leq 2 \boldsymbol{\tau}_{ \mathcal F } $.
	\end{enumerate}
\end{lemma}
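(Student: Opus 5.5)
The common thread is that every one of the three coefficients is built from the same primitive, the block-level quantity $\tau_{d_{\mathcal B,m}}(\mathcal M, Z_B)$. All four claims reduce to statements about how this primitive depends on the pseudometric $d_{\mathcal B,m}$, and hence on $\mathcal B$. Each $\boldsymbol{\tau}$ is an average, a maximum or a supremum of these primitives with nonnegative weights that do not depend on $\mathcal B$. So any pointwise inequality between pseudometrics, $d_{\mathcal B,m}\le c\, d_{\mathcal B',m}$, transfers first to the primitives and then to all three aggregates. The transfer goes through the coupling representation (Lemma \ref{lem:couple.strong} / \eqref{eqn:tau.coupling.main.1}), $\tau_d(\mathcal M,X)=\inf\{\mathbb E[d(X',X)]: X'\stackrel{d}{=}X,\ X'\indep\mathcal M\}$: the infimum is monotone in $d$ and positively homogeneous in $d$. (Equivalently one can argue through the dual form \eqref{eq:tau-generic}, since $\operatorname{Lip}_1(d)\subseteq\operatorname{Lip}_1(c\,d')$.)

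\emph{Item 3.} If $\mathcal B\subseteq\mathcal B'$, then $\sup_{f\in\mathcal B}|f(z_s)-f(z'_s)|\le\sup_{f\in\mathcal B'}|f(z_s)-f(z'_s)|$ coordinatewise, so $d_{\mathcal B,m}\le d_{\mathcal B',m}$, and monotonicity follows.

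\emph{Item 1.} Let $S:=\sup_{f\in\mathcal B}\|f\|_\infty$; assume $0<S<\infty$, the other cases being trivial. For $f\in\mathcal B$, the rescaled function $f/S$ has $\|f/S\|_\infty\le1$, so $f/S\in\operatorname{cone}\mathcal B$. Hence $|f(z)-f(z')|=S|(f/S)(z)-(f/S)(z')|\le S\sup_{g\in\operatorname{cone}\mathcal B}|g(z)-g(z')|$. This gives $d_{\mathcal B,m}\le S\,d_{\operatorname{cone}\mathcal B,m}$, and homogeneity of the infimum yields the claim.

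\emph{Item 2.} Monotonicity of $q\mapsto\tau_{\mathcal B}(r,q)$ is immediate from \eqref{eq:global-graph-tau}: enlarging $q$ enlarges the index set of the supremum over $(A,B)$, while the summand $|B|^{-1}\tau_{d_{\mathcal B,|B|}}(\sigma(Z_A),Z_B)$ does not depend on $q$.

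\emph{Item 4.} For $h=f-f'$ with $f\in\mathcal B$ and $f'\in\mathcal B'$, we have $|h(z)-h(z')|\le|f(z)-f(z')|+|f'(z)-f'(z')|\le 2\sup_{g\in\mathcal F}|g(z)-g(z')|$, using $\mathcal B,\mathcal B'\subseteq\mathcal F$. Thus $d_{\mathcal B-\mathcal B',m}\le 2d_{\mathcal F,m}$, and the transfer principle applies. This item (and item 1) is to be read with $\mathcal B-\mathcal B'\subseteq\operatorname{span}\mathcal F$, so that the coefficients are defined.

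The main obstacle is not the inequalities but well-definedness. The $\tau$-coefficient in \eqref{eq:tau-generic} requires $\mathbb E[d(X,x_0)]<\infty$. For item 1 with the cone class this holds automatically because cone elements are bounded by $1$, which is why the cone normalization is used. For items 3 and 4 with unbounded $\mathcal F$, I would interpret the right-hand sides as $+\infty$ when this moment fails, so the inequalities hold trivially. A second point to check is that the pseudometric version of the minimal coupling representation (Dedecker--Prieur) applies to $d_{\mathcal B,m}$, which is only a pseudometric on a Polish space. I would handle this by noting that each $d_{\mathcal B,m}$ is a supremum of measurable functions (assuming pointwise-measurability of $\mathcal B$), so the duality between \eqref{eq:tau-generic} and the coupling form holds as stated in Section \ref{sec:mixing}, which the paper already takes as given.
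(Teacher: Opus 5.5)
Your proposal is correct and follows essentially the paper's proof: each item reduces to a pointwise comparison of block pseudometrics, namely $d_{\mathcal B,m}\le S\,d_{\operatorname{cone}\mathcal B,m}$, $d_{\mathcal B,m}\le d_{\mathcal B',m}$ for $\mathcal B\subseteq\mathcal B'$, and $d_{\mathcal B-\mathcal B',m}\le 2d_{\mathcal F,m}$, and these inequalities are then transferred to the three aggregated coefficients. The only cosmetic difference is that the paper does the transfer through the dual definition \eqref{eq:tau-generic}, via inclusions of Lipschitz classes such as $\operatorname{Lip}_1(d_{\mathcal B-\mathcal B',m})\subseteq\operatorname{Lip}_2(d_{\mathcal F,m})$, rather than through the coupling representation, which sidesteps the pseudometric duality issue you flag.
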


\begin{proof}[Proof of Lemma \ref{lem:tau.properties}]
	
		\textsc{Part 1.} We first show that for any $B_{m_{k,\ell}} \in \mathcal P_{M}$, any $C_{k} \in \mathcal C_{r}$ and any $m_{k,\ell} \in C_{k}$, \begin{align*}
			\tau_{d_{\mathcal B,|B_{m_{k,\ell}}|}}
			\left(
			\sigma\left(
			Z_j:
			j\in\operatorname{Pred}_{\ell}( C_k)
			\right),
			Z_{B_{m_{k,\ell}}}
			\right) \leq  \sup_{ f \in \mathcal B } ||f||_{\infty}  \tau_{d_{\operatorname{cone} \mathcal B,|B_{m_{k,\ell}}|}}
			\left(
			\sigma\left(
			Z_j:
			j\in\operatorname{Pred}_{\ell}( C_k)
			\right),
			Z_{B_{m_{k,\ell}}}
			\right).
		\end{align*}
		
		Let $B : = \sup_{ f \in \mathcal B }||f||_{\infty}$. If this quantity is infinity there is nothing to prove, so we proceed under the assumption that is finite. 
		
		Take any $g \in \operatorname{Lip}_{1}(\mathsf{Z}^{m} , d_{\mathcal B , m})$, then for any $z,z' \in \mathsf{Z}^{m}$, $|g(z) - g(z') | \leq \sum_{s=1}^{m} \sup_{ f \in \mathcal B } |f(z_{s}) - f(z'_{s})| = B \sum_{s=1}^{m} \sup_{ f \in \mathcal B } |f(z_{s})/B - f(z'_{s})/B| $. Observe that for any $f \in \mathcal B$, $f/B \in \operatorname{cone} \mathcal B  : = \{ \lambda f \colon ||\lambda f ||_{\infty} \leq 1,~f \in \mathcal B~and~ \lambda > 0 \}$. Thus $\sup_{ f \in \mathcal B } |f(z_{s})/B - f(z'_{s})/B| \leq \sup_{ h \in \operatorname{cone}  \mathcal  B } |h(z_{s}) - h(z'_{s})|$. This readily implies the desired inequality.

	Since all instances of the $\tau$-coefficient --- $\bar{\tau}_{\mathcal B }
	\left(
	\mathcal P_{M},
	\mathcal C_{r}
	\right)$; $\tau^{\mathrm{max}}_{\mathcal B }
	\left(
	\mathcal P_{M},
	\mathcal C_{r}
	\right)$; and $\tau_{\mathcal B }
	\left( r, q
	\right)$ --- are increasing in $	\tau_{d_{\mathcal B,|B|}}$, the desired result follows.
		
		\bigskip  
	
		\textsc{Part 2.} By \eqref{eq:global-graph-tau} the result immediately follows as a larger $q$ increases the supremum.  
	
\bigskip 	
	
	\textsc{Part 3.} It is clear that $\mathcal B \mapsto d_{m,\mathcal B}(\cdot,\cdot)$ is non-decreasing, which renders $\mathcal B \mapsto \mathrm{Lip}_{1}( \mathsf{Z}^{m}, d_{m,\mathcal B} )$ as non-decreasing. The supremum in the definition of $\tau$ in \eqref{eq:tau-generic} is taken over this class, so the $\tau$-coefficient in that expression in non-decreasing. Thus, $\mathcal B \mapsto \boldsymbol{\tau}_{\mathcal B , G_{n}}$ is also non-decreasing. 

\bigskip 

    \textsc{Part 4.} We show that $\operatorname{Lip}_{1}(\mathsf{Z}^{m} , d_{\mathcal B - \mathcal B' , m}) \subseteq \operatorname{Lip}_{2}(\mathsf{Z}^{m} , d_{\mathcal F , m})$. Take any $g \in \operatorname{Lip}_{1}(\mathsf{Z}^{m} , d_{\mathcal B - \mathcal B' , m})$, then for any $z,z' \in \mathsf{Z}^{m}$, $|g(z) - g(z') | \leq \sum_{s=1}^{m} \sup_{ f \in \mathcal B - \mathcal B' } |f(z_{s}) - f(z'_{s})| \leq \sum_{s=1}^{m} \{ \sup_{ f \in \mathcal B } |f(z_{s}) - f(z'_{s})| + \sup_{ f' \in \mathcal B' } |f'(z_{s}) - f'(z'_{s})| \}   $. Since $\mathcal B, \mathcal B' \subseteq \mathcal F$, the previous inequality implies $|g(z) - g(z') | \leq 2 \sum_{s=1}^{m} \sup_{ f \in \mathcal F } |f(z_{s}) - f(z'_{s})|  $ and the desired result holds. 

     By \eqref{eq:tau-generic}, $\tau_{d_{\mathcal B - \mathcal B' , m}} \leq 2 \tau{d_{\mathcal F , m}}$ and the desired result follows because all instances of the $\tau$-coefficient are linear on $\tau$.

\end{proof}

\section{Appendix for Maximal Inequality Results}
\label{app:maximal}

\begin{proof}[Proof of Lemma \ref{lem:VarBlock.upper}]
	For any $g \in \mathcal F - \mathcal F$,  by Lemma \ref{lem:VarBlock.upper.0}, 
	\begin{align*}
		\sqrt{	\operatorname{Var}\left(  |B|^{-1/2} \sum_{i \in B} g(Z_{i})     \right)  } \leq ||g||_{B},
	\end{align*}
	where 
	\begin{align*}
		||g||_{B} : =  \sqrt{ \sum_{\rho \in R(B)}  	\frac{|S_{B}(\rho)|}{|B|}    \max_{i \in B} \int_{0}^{1}  1\left\{\alpha_{ \mathcal{F} , B }(\rho) \geq u	\right\} |Q_{|g(Z_{i})|}(u) |^{2} du }. 
	\end{align*}

    We now show that \begin{align}\label{eqn:bound.Bnorm}
	||g||_{B}   \leq \Gamma_{p}(B)  \max_{i \in B}   || g ||_{L^{2p}(P_{Z_{i}}) }.
\end{align}
	
	By, H\"{o}lder inequality, for any $p \in [1,\infty]$, 
	\begin{align*}
		\max_{i \in B}   \int_{0}^{1}  1\left\{\alpha_{ \mathcal{F} , B }(\rho) \geq u	\right\}  |Q_{|g(Z_{i})|}(u) |^{2} du \leq  & \left( \int_{0}^{1} 1\{  \alpha_{ \mathcal F,B}(\rho) \geq u  \} du \right)^{\frac{p-1}{p}}   \max_{i \in B}  \left(  \int_{0}^{1} |Q_{|g(Z_{i})|}(u)|^{2p}  \right)^{1/p}  \\
		= &  \left(  \alpha_{ \mathcal F,B}(\rho)  \right)^{\frac{p-1}{p}}   \max_{i \in B} ||g||^{2}_{L^{2p}(P_{Z_{i}})}. 
	\end{align*}
	Hence,
	\begin{align*}
		||g||_{B}  \leq   \sqrt{ \sum_{\rho \in R(B)}  	\frac{|S_{B}(\rho)|}{|B|}   \left(  \alpha_{ \mathcal F,B}(\rho)  \right)^{\frac{p-1}{p}}    }  \max_{i \in B} ||g||_{L^{2p}(P_{Z_{i}})}. 
	\end{align*}
	
	On the other hand, 
	\begin{align*}
		||g||_{B} \leq   \sqrt{   \int_{0}^{1}  \sum_{\rho \in R(B)}  	\frac{|S_{B}(\rho)|}{|B|}    1\left\{\alpha_{ \mathcal{F} , B }(\rho) \geq u	\right\}  \max_{i \in B} |Q_{|g(Z_{i})|}(u) |^{2} du } = : A_B(g).
	\end{align*}
	
	Define $\mu_B(u)	:=
	\sum_{\rho\in R(B)}
	\frac{|S_B(\rho)|}{|B|}
	1\{\alpha_{\mathcal F,B}(\rho)\geq u\}$.  Then, by H\"{o}lder's inequality,
	\begin{align*}
		A_B(g)^2
		& \leq 
		\left(
		\int_0^1 \mu_B(u)^{p'}\,du
		\right)^{1/p'}
		\left(
		\int_0^1
		\max_{i\in B} Q_{|g(Z_i)|}^{2p}(u)
		du
		\right)^{1/p},
	\end{align*}
	with $1/p' + 1/p = 1$.  Since $\max_{i\in B} Q_{|g(Z_i)|}^{2p}(u)
	\leq
	\sum_{i\in B} Q_{|g(Z_i)|}^{2p}(u)$, 
	we have
	\begin{align*}
		\left(
		\int_0^1
		\max_{i\in B} Q_{|g(Z_i)|}^{2p}(u)
		du
		\right)^{1/(2p)}
		&\leq
		\left(
		\sum_{i\in B}
		\int_0^1
		Q_{|g(Z_i)|}^{2p}(u)
		du
		\right)^{1/(2p)}	&\leq
		|B|^{1/(2p)}
		\max_{i\in B}
		\|g(Z_i)\|_{L^{2p}}.
	\end{align*}
	Observe that if $||dP_{Z_{i}}/dP_0||_{\infty} \leq C$, then $\int_{0}^{1} \max_{i\in B} Q_{|g(Z_i)|}^{2p}(u) du
	\leq C \int_{0}^{1} Q_{|g(Z_i)|,P_0}^{2p}(u) du$ and no extra $|B|^{1/2p}$ is needed --- $Q_{\cdot,P_0}$ denotes the quantile associated to $P_0$.

	Therefore,  defining
	\begin{align*}
		\Gamma_p^{\sharp}(B)
		:=
		\left(
		\int_0^1
		\left[
		\sum_{\rho\in R(B)}
		\frac{|S_B(\rho)|}{|B|}
		1\{\alpha_{\mathcal F,B}(\rho)\geq u\}
		\right]^{p'}
		du
		\right)^{1/(2p')},
	\end{align*}
	and $\Gamma_1^{\sharp}(B)
	\leq
	\left(
	\sum_{\rho\in R(B)}
	\frac{|S_B(\rho)|}{|B|}
	\right)^{1/2}$ for $p = 1$ 
	we obtain $A_B(g)
		\leq
		\Gamma_p^{\sharp}(B)
		|B|^{1/(2p)}
		\max_{i\in B}
		\|g(Z_i)\|_{L^{2p}}$.

\end{proof}

\subsection{Proofs of Supplemental Lemmas}

\begin{proof}[Proof of Lemma \ref{lem:VarBlock.upper.0}]	
	Observe that 
	\begin{align*}
		\operatorname{Var} \left( |B|^{-1/2} \sum_{i \in B} g(Z_{i})   \right) = |B|^{-1} \left(  \sum_{i \in B} \operatorname{Var}(g(Z_{i})) + \sum_{\substack{i,j\in B\\ i \ne j}} \operatorname{Cov}(g(Z_i),g(Z_j)) \right).
	\end{align*}
	By Theorem 1.1 in \cite{rio2017asymptotic}, $|\operatorname{Cov}(g(Z_i),g(Z_j))| \leq \int^{\alpha(g(Z_i),g(Z_j))}_{0} Q_{|g(Z_i)|}(u) Q_{|g(Z_j)|}(u) du $ where $\alpha(X,Y) : = 2 \sup_{s,t \in \mathbb{R}} | \mathbb P(Z_{i} \geq t ~and~Z_{j} \geq s) - P_{Z_{i}}(Z_{i} \geq t) P_{Z_{j}}(Z_{j} \geq s)   |$ for any real-valued random variable $X$ and $Y$. Moreover, if $d_{n}(i,j ) \geq r $ for some $r \geq 1$, 
	\begin{align*} 
		\alpha(g(Z_i),g(Z_j)) \leq \sup_{i , j \in B \colon d_{n}(i,j ) \geq r }   \alpha(g(Z_i),g(Z_j))  	\leq &  \sup_{g \in  \mathcal B} \sup_{i , j \in B \colon d_{n}(i,j ) \geq r }   \alpha(g(Z_i),g(Z_j)) 	= :  \alpha_{\mathcal B,B} (r),
	\end{align*}

    Hence,
	\begin{align*}
		\operatorname{Var}\left( |B|^{-1/2}  \sum_{i \in B} g(Z_{i})  \right) \leq & |B|^{-1}  \sum_{i \in B} \left( \operatorname{Var}(g(Z_{i})) +  \sum_{j\in B} \int^{1}_{0} 1\{ \alpha(g(Z_{i}),g(Z_{j})) \geq u \} Q_{|g(Z_{i})|}(u)  Q_{|g(Z_{j})|}(u) du \right) \\
		\leq &  |B|^{-1} \left( \sum_{i,j\in B} \int^{1}_{0} 1\{ \alpha_{\mathcal{B} , B }(d_{n}(i,j)) \geq u \} Q_{|g(Z_{i})|}(u)  Q_{|g(Z_{j})|}(u) du \right) \\
		= &  |B|^{-1}	\sum_{r \in R(B)} \sum_{(i,j) \in S_{B}(r)} 	\int_{0}^{1} 1\left\{\alpha_{\mathcal{B} , B }(r) \geq u	\right\}		Q_{|g(Z_{i})|}(u)Q_{|g(Z_{j})|}(u)du.
	\end{align*}
	where $S_{B}(r) := \{ (i,j) \in B \times B : d_{n}(i,j)=r \}$ and $R(B) = \{ r \in \{ 0,1, \ldots, n-1\} \cup \{\infty\}    \colon 	S_{B}(r) \ne \emptyset  \}$. We set $d_{n}(i,i) = 0$ and $\alpha_{\mathcal{B} , B }(0) = 1$. The case $r=\infty$ captures pairs that are not connected, which by our assumption over the graph-dependent process are independent and thus $\alpha_{\mathcal{B} , B }(\infty) : =  0$.  
	
	For any $r \in R(B)$, let
	\begin{align*}
		\bar{N}_{B}(r)  : = |B|^{-1} |S_{B}(r)|. 
	\end{align*}
	That is, $	\bar{N}_{B}(r) $ is the average number of nodes at distance $r$ from each other within $B$. Then,
	\begin{align*}
		\operatorname{Var}\left( |B|^{-1/2}  \sum_{i \in B} g(Z_{i})  \right)  \leq & 	\sum_{r \in R(B)}  	\bar{N}_{B}(r)  	\int_{0}^{1}  1\left\{\alpha_{\mathcal{B} , B }(r) \geq u	\right\} \sum_{(i,j) \in S_{B}(r)}   \frac{  Q_{|g(Z_{i})|}(u)Q_{|g(Z_{j})|}(u) 	}{	|S_{B}(r) |  }	du \\
		= &  \sum_{r \in R(B)}  	\bar{N}_{B}(r)   \sum_{(i,j) \in S_{B}(r)}   \frac{  	\int_{0}^{1}  1\left\{\alpha_{ \mathcal{B} , B }(r) \geq u	\right\} Q_{|g(Z_{i})|}(u)Q_{|g(Z_{j})|}(u) 	du}{	|S_{B}(r) |  }. 
	\end{align*}
	
	For any $i,j,r$, by Cauchy-Swarchz inequality,
	\begin{align*}
		\int_{0}^{1}  1\left\{\alpha_{\Delta_l \mathcal{F} , B }(r) \geq u	\right\} Q_{|g(Z_{i})|}(u)Q_{|g(Z_{j})|}(u) du \leq & \sqrt{ \int_{0}^{1}  1\left\{\alpha_{ \mathcal{B} , B }(r) \geq u	\right\} |Q_{|g(Z_{i})|}(u)|^{2} du  } \\
		&  \times \sqrt{ \int_{0}^{1}  1\left\{\alpha_{\mathcal{B} , B }(r) \geq u	\right\} |Q_{|g(Z_{j})|}(u) |^{2} du }\\
		\leq & \max_{i \in B} \int_{0}^{1}  1\left\{\alpha_{\mathcal{B} , B }(r) \geq u	\right\} |Q_{|g(Z_{i})|}(u) |^{2} du.
	\end{align*}
	
	Therefore, 
	\begin{align*}
		\operatorname{Var}\left( |B|^{-1/2}  \sum_{i \in B} g(Z_{i})  \right)   \leq & 	\sum_{r \in R(B)}  	\bar{N}_{B}(r)    \max_{i \in B} \int_{0}^{1}  1\left\{\alpha_{\mathcal{B} , B }(r) \geq u	\right\} |Q_{|g(Z_{i})|}(u) |^{2} du.
	\end{align*}
	
	The desired expression follows because $\mathcal B \subseteq \mathcal F - \mathcal F$ and $\mathcal B \mapsto  \alpha_{ \mathcal B , B }$ is non-decreasing. 
\end{proof}

\begin{proof}[Proof of Lemma \ref{lem:bern}]
	We can cast $	\mathbb G_{n}^{(k),\ast}$ as
	\begin{align}
		\mathbb G_{n}^{(k),\ast}(g)
		&= \frac{\sqrt{|B_{m}|}}{\sqrt n}\sum_{m\in\mathcal C_k} \frac{1}{\sqrt{|B_{m}|}} \sum_{i \in B_{m}} \{ g(Z^{\ast}_{i}) - \mathbb{E}[g(Z_{i})]  \}
	\end{align}
	Set, for all $m \in C_{k}$,
	\begin{align}
		\delta^{\ast}_m g := \frac{1}{\sqrt {|B_{m}|} } \sum_{i \in B_{m}} \{ g(Z^{\ast}_{i}) - \mathbb{E}[g(Z_{i})]  \}.
	\end{align}
	By Theorem \ref{thm:coupling}, the variables \((Z^{\ast}_{B_{m}})_{m\in\mathcal C_k}\) are independent, so, \((	\delta^{\ast}_m g)_{m\in\mathcal C_k}\) are too. Moreover, $\mathbb E^{\ast}[	\delta^{\ast}_m g] = 0$, 
	because, by Theorem \ref{thm:coupling} \(Z_i^\ast\stackrel d= Z_{i} \) for each \(i \in B_{m}\). Also, $||\delta^{\ast}_m g||_{\infty} \leq 2 \sqrt{|B_{m}|} ||g||_{\infty} \leq 2 \sqrt{q} ||g||_{\infty}$ 
	where the last inequality follows because $|B_{m}|\leq q$.

	Therefore the Bernstein inequality for independent centered random variables (e.g., see \cite{VdV-W1996}) yields, for every \(t>0\),
	\begin{align*}
		\mathbb P^{\ast} \!\left(
		\left| n^{-1/2}  \sum_{m\in\mathcal C_k} \sqrt{|B_{m}|} \delta^{\ast}_m g  \right|\ge t
		\right)
		\le &
		2\exp\left(
		-\frac{n t^2}{
			2\sum_{m\in\mathcal C_k} |B_{m}|  \operatorname{Var}( \delta^{\ast}_m g )
			+\frac{2}{3}  \sqrt{n} q ||g||_{\infty}   t   
		}
		\right)\\
		= & 
		2\exp\left(
		-\frac{t^2}{
			2\sum_{m\in\mathcal C_k} \frac{|B_{m}|}{n}  \operatorname{Var}( \delta_m g )
			+\frac{2}{3}  \frac{q}{\sqrt{n}} ||g||_{\infty}   t   
		}
		\right),
	\end{align*}
	where $ \operatorname{Var}( \delta_m g ) =  \operatorname{Var}( \delta^{\ast}_m g )$ follows because, from Theorem \ref{thm:coupling}, $Z^{\ast}_{B_{m}} \stackrel{d}{=} Z_{B_{m}}$.
	
	By Lemma \ref{lem:VarBlock.upper.0}, $\operatorname{Var}( \delta_m g )  \leq ||g||^{2}_{B_{m}}$, so 
	\begin{align*}
		\mathbb P^{\ast} \!\left(
		\left| n^{-1/2}  \sum_{m\in\mathcal C_k} \sqrt{|B_{m}|} \delta^{\ast}_m g  \right|\ge t
		\right)
		\le  2\exp\left(
		-\frac{t^2}{
			4 \sum_{m\in\mathcal C_k} \frac{|B_{m}|}{n}  ||g||^{2}_{B_{m}}
			+\frac{2}{3}  \frac{q}{\sqrt{n}} ||g||_{\infty}   t   
		}
		\right),
	\end{align*}
	an thus the desired result follows. 
\end{proof}

\begin{proof}[Proof of Lemma~\ref{lem:bound.pr.talagrand}]
	Fix \(l\geq 1\). For notational simplicity, write $q_l:=\max_{m\in[M_l]}|B_m|$ and, for each color class \(k\in[K_l]\) and each measurable function \(g\), define
	\begin{align*}
		\sigma_{k}^{2}(g)
		&:=
		\sum_{m\in\mathcal C_k}
		\frac{|B_m|}{n}\|g\|_{B_m}^{2}~and~
		b_l(g)
		:=
		\frac{q_l}{\sqrt n}\|g\|_{\infty}.
	\end{align*}
	By Lemma \ref{lem:bern} (observe that $g \in \Delta_{l} \mathcal F \subseteq \mathcal F - \mathcal F$), for every \(x>0\),
	\begin{align}
		\mathbb{P}^{\ast} \left(
		|\mathbb G^{(k),*}_{n}(g)|
		>
		2\sigma_{k}(g)\sqrt{x}
		+
		\frac{2}{3}b_l(g)x
		\right)
		\leq
		2e^{-x}.
		\label{eq:bernstein-color-class}
	\end{align}
	
	Define $A_{l,k}(g)
		:=
		2\sigma_{k}(g)\ell(l)
		+
	\frac{2}{3}b_l(g)\ell(l)^2$ and $A_l(g):=\sum_{k=1}^{K_l}A_{l,k}(g)$. 
	Taking \(x=t\ell(l)^2\) in \eqref{eq:bernstein-color-class}, and using
	\(\sqrt t\leq t\) for \(t\geq1\), gives
	\begin{align}
		\mathbb P^{\ast} \left(
		|\mathbb G^{(k),*}_{n}(g)|
		>
		tA_{l,k}(g)
		\right)
		\leq
		2e^{-t\ell(l)^2},
		\qquad t\geq1.
		\label{eq:normalized-tail}
	\end{align}
	
   Let $Y_{l,k}(g):=\left( \frac{|\mathbb G^{(k),*}_{n}(g)|}{A_{l,k}(g)}  -1\right)_+$. 	For every \(s\geq0\), \eqref{eq:normalized-tail} with $t=1+s$ implies $\mathbb P^{\ast} \left(Y_{l,k}(g)>s\right) \leq 
	2e^{-(1+s)\ell(l)^2}
	\leq
	2e^{-s\ell(l)^2}$. 
	Hence, for any \(e\in(0,1)\),
	\begin{align}
		\mathbb E^{\ast} \left[
		\exp\{e\ell(l)^2Y_{l,k}(g)\}
		\right]
		&\leq 1 + \int_{1}^{\infty} \mathbb{P}^{\ast} \left( 
		\exp\{e\ell(l)^2Y_{l,k}(g)\} \geq u  \right) du \nonumber \\
        & = 
		1+
		e\ell(l)^2
		\int_0^\infty
		e^{e\ell(l)^2s}
		\mathbb P^{\ast} (Y_{l,k}(g)>s)\,ds \nonumber \\
        & \leq
		1+
		2e\ell(l)^2
		\int_0^\infty
		e^{-(1-e)\ell(l)^2s}\,ds
		\nonumber\\
		&=
		1+\frac{2e}{1-e}
		=:C_e,
		\label{eq:shifted-exp-moment}
	\end{align}
    where the second line follows from a change of variables ($u = e^{e \ell(l) s }$).
	
	Let  $w_{l,k}(g):=\frac{A_{l,k}(g)}{A_l(g)}$ for all $ k=1,\ldots,K_l$. Then \(\sum_{k=1}^{K_l}w_{l,k}(g)=1\), and
	\[
	\frac{
		\sum_{k=1}^{K_l}|\mathbb G^{(k),*}_{n}(g)|
	}{
		A_l(g)
	}
	=
	\sum_{k=1}^{K_l}w_{l,k}(g) \frac{|\mathbb G^{(k),*}_{n}(g)|}{A_{l,k}(g)} 
	\leq
	1+
	\sum_{k=1}^{K_l}w_{l,k}(g)Y_{l,k}(g).
	\]
	Therefore, if the left-hand side is larger than \(v\), then $\sum_{k=1}^{K_l}w_{l,k}(g)Y_{l,k}(g)>v-1$.  By Markov's inequality, convexity of the exponential, and
	\eqref{eq:shifted-exp-moment},
	\begin{align}
		 \mathbb P^{\ast} \left(
		\sum_{k=1}^{K_l}
		|\mathbb G^{(k),*}_{n}(g)|
		>
		vA_l(g)
		\right)
		\nonumber & \leq
		\mathbb P^{\ast} \left(
		\sum_{k=1}^{K_l}
		w_{l,k}(g)Y_{l,k}(g)>v-1
		\right)
		\nonumber\\
		& \leq
		e^{-e\ell(l)^2(v-1)}
		\mathbb E^{\ast} \left[
		\exp\left\{
		e\ell(l)^2
		\sum_{k=1}^{K_l}w_{l,k}(g)Y_{l,k}(g)
		\right\}
		\right]
		\nonumber\\
		& \leq
		e^{-e\ell(l)^2(v-1)}
		\sum_{k=1}^{K_l}
		w_{l,k}(g)
		\mathbb E^{\ast} \left[
		\exp\{e\ell(l)^2Y_{l,k}(g)\}
		\right]
		\nonumber\\
		& \leq
		C_e e^{-e\ell(l)^2(v-1)}.
		\label{eq:sum-color-tail}
	\end{align}
	If \(A_l(g)=0\), then \(A_{l,k}(g)=0\) for every \(k\), and the same conclusion holds trivially.
	
	Now take a union bound over the possible increments at scale \(l\),
    	\begin{align}
		\mathbb P^{\ast} \left(
		\exists g\in\Delta_l\mathcal F:
		\sum_{k=1}^{K_l}
		|\mathbb G^{(k),*}_{n}(g)|
		>
		vA_l(g)
		\right) \leq \sum_{g \in \Delta_l \mathcal F} \mathbb P^{\ast} \left(  \sum_{k=1}^{K_l}
		|\mathbb G^{(k),*}_{n}(g)|
		>
		vA_l(g)  \right)
	\end{align}
    
    The number of possible increments \(g=\Delta_l f\) is bounded by $|T_l(c,d)|\,|T_{l-1}(c,d)|
	\leq
	|T_l(c,d)|^2
	\leq
	2^{2d2^{l/c}}$ where the last inequality follows because  $|T_l(c,d)|\leq 2^{d2^{l/c}}$. Therefore,
	\begin{align}
		\mathbb P^{\ast} \left(
		\exists g\in\Delta_l\mathcal F:
		\sum_{k=1}^{K_l}
		|\mathbb G^{(k),*}_{n}(g)|
		>
		vA_l(g)
		\right) \leq
		C_0\,2^{2d2^{l/c}}e^{-e\ell(l)^2(v-1)}.
		\label{eq:union-increments-scale-l}
	\end{align}
	
	Define the event $E:=
	\bigcap_{l\geq1}
	\bigcap_{g\in\Delta_l\mathcal F}
	\left\{
	\sum_{k=1}^{K_l}
	|\mathbb G^{(k),*}_{n}(g)|
	\leq
	vA_l(g)
	\right\}$. 
	By \eqref{eq:union-increments-scale-l},
	\begin{align}
		\mathbb P^{\ast} (E^c)
		\leq
		C_e
		\sum_{l=1}^{\infty}
		2^{2d2^{l/c}}e^{-e\ell(l)^2(v-1)} \iff \mathbb P^{\ast} (E)
		\geq 1- 
		C_e
		\sum_{l=1}^{\infty}
		2^{2d2^{l/c}}e^{-e\ell(l)^2(v-1)}.
		\label{eq:event-complement-bound}
	\end{align}
	
	On the event \(E\), for every \(f\in\mathcal F\),
	\begin{align*}
		\sum_{l=1}^{\infty}
		\sum_{k=1}^{K_l}
		|\mathbb G^{(k),*}_{n}(\Delta_l f)|
		&\leq
		v
		\sum_{l=1}^{\infty}
		A_l(\Delta_l f)
		=
		v
		\sum_{l=1}^{\infty}
		\sum_{k=1}^{K_l}
		\left[
		2
		\sqrt{
			\sum_{m\in\mathcal C_k}
			\frac{|B_m|}{n}
			\|\Delta_l f\|_{B_m}^{2}
		}
		\ell(l)
		+
		\frac{2}{3}
		\frac{q_l}{\sqrt n}
		\|\Delta_l f\|_{\infty}
		\ell(l)^2
		\right].
	\end{align*}
	Taking the supremum over \(f\in\mathcal F\), we obtain
	\begin{align*}
		\sup_{f\in\mathcal F}
		\sum_{l=1}^{\infty}
		\sum_{k=1}^{K_l}
		|\mathbb G^{(k),*}_{n}(\Delta_l f)|
	 \leq
		v
		\sup_{f\in\mathcal F}
		\sum_{l=1}^{\infty}
		\sum_{k=1}^{K_l}
		\left[
		2
		\sqrt{
			\sum_{m\in\mathcal C_k}
			\frac{|B_m|}{n}
			\|\Delta_l f\|_{B_m}^{2}
		}
		\ell(l)
		+
		\frac{2}{3}
		\frac{\max_{m\in[M_l]}|B_m|}{\sqrt n}
		\|\Delta_l f\|_{\infty}
		\ell(l)^2
		\right].
	\end{align*}
	Combining this display with \eqref{eq:event-complement-bound} proves that
	\begin{align*}
		&\mathbb P^{\ast} \left(
		\sup_{f\in\mathcal F}
		\sum_{l=1}^{\infty}
		\sum_{k=1}^{K_l}
		|\mathbb G^{(k),*}_{n}(\Delta_l f)| \leq
		v
		\sup_{f\in\mathcal F}
		\sum_{l=1}^{\infty}
		\sum_{k=1}^{K_l}
		\left[
		2
		\sqrt{
			\sum_{m\in\mathcal C_k}
			\frac{|B_m|}{n}
			\|\Delta_l f\|_{B_m}^{2}
		}
		\ell(l)
		+
		\frac{2}{3}
		\frac{\max_{m\in[M_l]}|B_m|}{\sqrt n}
		\|\Delta_l f\|_{\infty}
		\ell(l)^2
		\right]
		\right)
		\\
		&\qquad\geq
		1-
		C_e
		\sum_{l=1}^{\infty}
		2^{2d2^{l/c}}e^{-e\ell(l)^2(v-1)}.
	\end{align*}
	In particular, if \(\ell(l)\geq1\) for all \(l\) and \(v\geq4\), then, $e \ell(l)^{2}(v-1) = e \ell(l)^{2}(0.5 v + 0.5 v-1)  \geq 2 e \ell(l)^{2} + e (0.5 v - 1) $. Hence the previous display implies
	\begin{align*}
		& \mathbb P ^{\ast} \left(
		\sup_{f\in\mathcal F}
		\sum_{l=1}^{\infty}
		\sum_{k=1}^{K_l}
		|\mathbb G^{(k),*}_{n}(\Delta_l f)|
	 \leq
		v
		\sup_{f\in\mathcal F}
		\sum_{l=1}^{\infty}
		\sum_{k=1}^{K_l}
		\left[
		2
		\sqrt{
			\sum_{m\in\mathcal C_k}
			\frac{|B_m|}{n}
			\|\Delta_l f\|_{B_m}^{2}
		}
		\ell(l)
		+
		\frac{2}{3}
		\frac{\max_{m\in[M_l]}|B_m|}{\sqrt n}
		\|\Delta_l f\|_{\infty}
		\ell(l)^2
		\right]
		\right)
		\\
		&\qquad\geq
		1-
		C
		\left(
		\sum_{l=1}^{\infty}
		2^{2d2^{l/c}}e^{-2e\ell(l)^2}
		\right)e^{-e0.5v},
	\end{align*}
	for some universal constant $C$. 
	
	Set $ e  = 0.5$. We conclude by observing that since $\ell(l) \geq 2 d 2^{l/c}$, $2d 2^{l/c} -  \ell(l)^{2} \leq 2d 2^{l/c} - 4 d^{2} 2^{2l/c}$ which is negative as $d \geq 1$. Hence, $	\sum_{l=1}^{\infty}
	2^{2d2^{l/c}}e^{-2e\ell(l)^2}$ is finite.

\end{proof}

\section{Appendix for Section \ref{sec:geometry}}
\label{app:geometry} 

\subsection{Uniform doubling with polynomial growth graphs}

The following lemma is used to show the results in this section.

\begin{lemma}\label{lem:PolGraph.growth}
	Suppose $(\mathsf X,\mathsf d)$ is uniformly doubling with dimension $d$.
	Then
	for all $x \in \mathsf X$ and all $0<r\le R$,
	\begin{align}
		\sup \Big\{
		|A| : A \subseteq B(x,R), \;
		\mathsf d (a,b) \ge r \;\; \forall a \neq b \in A
		\Big\}
		\;\le\;
		2^{2d} \left(1+\frac{R}{r}\right)^d.
	\end{align}
\end{lemma}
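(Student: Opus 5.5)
The argument is the standard volume comparison for doubling spaces, carried out with the doubling property itself in place of a measure. Fix $x$, $0<r\le R$, and an $r$-separated set $A\subseteq B(x,R)$. The doubling hypothesis says every ball of radius $\rho$ is covered by $C_{\mathrm{dbl}}=2^d$ balls of radius $\rho/2$. Iterating this $k$ times gives
\begin{align*}
B(x,R)\subseteq\bigcup_{j=1}^{2^{dk}}B\bigl(y_j,R2^{-k}\bigr),
\end{align*}
where each intermediate ball is re-covered around its own centre.

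Choose $k$ to be the smallest integer with $R2^{-k}<r/2$. A ball of radius strictly less than $r/2$ has diameter strictly less than $r$, so it contains at most one point of the $r$-separated set $A$. Hence $|A|\le 2^{dk}$.

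Minimality of $k$ gives $R2^{-(k-1)}\ge r/2$, that is $2^{k}\le 4R/r$. Therefore
\begin{align*}
|A|\le (4R/r)^d = 2^{2d}(R/r)^d\le 2^{2d}(1+R/r)^d.
\end{align*}
The case $R<r/2$ is trivial: take $k=0$, so $|A|\le 1$. The extra $1+$ in the stated bound absorbs any rounding.

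The one point needing care is strictness at the threshold. Two points at distance exactly $r$ must not both lie in a single covering ball. If the balls are closed, a radius of exactly $r/2$ could contain both, which is why $k$ is chosen with strict inequality $R2^{-k}<r/2$. The resulting extra factor of $2$ is exactly what produces the $2^{2d}$ constant.

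If the covering balls are allowed to have centres $x_k$ outside $\mathsf X$, nothing changes, since the argument only uses the covering. I do not expect any real obstacle beyond this bookkeeping.
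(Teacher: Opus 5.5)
Your proof is correct and follows essentially the paper's route: iterate the doubling property to cover $B(x,R)$ by $2^{dk}$ small balls, then note that each such ball holds at most one point of the $r$-separated set (the paper phrases this as $|A|\le N_{\mathrm{cov}}(B(x,R+r/2),r/2)$ combined with its covering bound $N_{\mathrm{cov}}(B(x,R),r)\le 2^d(R/r)^d$). Your strict choice $R2^{-k}<r/2$ is slightly more careful than the paper: the paper asserts that no ball of radius $r/2$ contains two points of $A$, which fails for closed balls (the convention in the graph applications) when two points are at distance exactly $r$, and your choice also yields the marginally sharper bound $2^{2d}(R/r)^d$.
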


\begin{proof}[Proof of Lemma \ref{lem:PolGraph.growth}]
	Let $N_{\mathrm{cov}}(B(x,R),\varepsilon)$ denote the covering number.
	By the doubling property, $N_{\mathrm{cov}}(B(x,R),R/2) \le C_{\mathrm{dbl}}$, 
	and by iteration, for any $k\in\mathbb{N}$, $N_{\mathrm{cov}}(B(x,R),R/2^k) \le C_{\mathrm{dbl}}^k = 2^{k d}$. 
	Choose $k$ such that $R/2^k \le r < R/2^{k-1}$, which implies
	$k \le \log_2(R/r)+1$. Then
	\begin{align*}
		N_{\mathrm{cov}}(B(x,R),r) \le 	N_{\mathrm{cov}}(B(x,R),R/2^{k}) \le 
		2^{k d}  \leq 	2^{d} \left(\frac{R}{r}\right)^{d}.
	\end{align*}
	
	Now let $A \subseteq B(x,R)$ be $r$-separated. So, no ball of radius $r/2$ can contain two distinct points of $A$. Thus,  $	|A| \le	N_{\mathrm{cov}}(B(x,R+r/2),r/2)$. By applying the previous display with $r/2$,
	\begin{align}
		|A| \leq 	2^{d}\left(\frac{R+r/2}{r/2}\right)^d = 	2^{2d}\left(1 + \frac{R}{r} \right)^d,
	\end{align}
	which proves the claim.
\end{proof}

 \begin{proof}[Proof of Lemma \ref{lem:PolyGraph.color}]
	Fix \(q\ge 1\) and set $\rho := q^{1/d}$. 
	Let \(\{x_1,\ldots,x_M\}\subset N_n\) be a maximal \(\rho\)-separated set, that is, $d_n(x_m,x_\ell)>\rho,~\forall m\neq \ell$, 
	and no further point of \(N_n\) can be added while preserving this property.
	Maximality implies the covering property: for all $i\in N_n$,  $\min_{1\le m\le M} d_n(i,x_m)\le \rho$. 
	
	Define the Voronoi partition \(\mathcal P_{M,q} =\{B_1,\ldots,B_M\}\) by $B_m
	:=
	\Big\{
	i\in N_n:
	d_n(i,x_m)\le d_n(i,x_\ell)
	\ \text{for all } \ell
	\Big\}$, 
	with ties broken arbitrarily. By construction, $B_m \subseteq B(x_m,\rho)$. 
	Therefore, using the polynomial upper-growth condition,
	\begin{align*}
		|B_m|
		\le
		|B(x_m,\rho)|
		\le \sum_{l=0}^{\rho} | \partial B(x_m,l)|  \le
		C
		\sum_{\ell=0}^{\rho}
		(1+\ell)^{d-1}  \lesssim
		(1+\rho)^{d-1+1} \lesssim  q .
	\end{align*}
	Hence, this construction satisfies \eqref{eqn:Double.Poly.1}.  
	
	Moreover, for any $(i,j) \in B_{m}$, by the triangle inequality $d_{n}(i,j) \leq d_{n}(i,x_{m}) +  d_{n}(j,x_{m}) \leq 2 \rho$. 
	So, $\operatorname{diam}(B_{m}) \lesssim \rho$ holds.

	We now show \eqref{eqn:Double.Poly.chrom}. To do this we use the well known greedy bound \eqref{eqn:color.greedy.bound} in Appendix \ref{app:graph}. For this, take the $r$-conflict graph (defined in that Appendix), $H_{r} : = ([M],E_{r})$ and endow it with the following metric $(m,m')\mapsto d_{H}(m,m') : = d_{n}(x_{m},x_{m'})$. The tuple $(H_{r},d_{H})$ is uniformly doubling with dimension at most $d$ because $(H_{r},d_{H})$ is isomorphic to a subset of $(\mathfrak G_n,d_n)$, which is uniformly doubling with dimension $d$. 

    Moreover, for any $m \in [M]$ the set of neighbors according to $E_{r}$ is given by $\mathcal N _{H_{r}}(m) : = \{ \ell \in [M] \colon d_{n}(B_{m},B_{\ell}) \leq r \} $ and is a set of $\rho$-separated points under $d_{H}$ --- because each $\{x_{1},...,x_{M}\}$ are $\rho$-separated under $d_{n}$. Finally, for any $m' \in \mathcal N _{H_{r}}(m)$, 
    \begin{align*}
        d_{H}(m,m') = d_{n}(x_{m},x_{m'}) \leq d_{n}(x_{m},i) + d_{n}(i,j) + d_{n}(x_{m'},j) \leq 2 \rho + r.
    \end{align*}
    Here $(i,j)$ is the pair such that $i \in B_{m}$ and $j \in B_{m'}$ and $d_n(i,j)\le r$ --- such pair exists because $d_{n}(B_{m},B_{\ell}) \leq r$. Hence $\mathcal N _{H_{r}}(m) \subseteq B_{d_{H}}(m,r+2\rho)$. 

    Therefore, we can apply Lemma \ref{lem:PolGraph.growth} with $\mathsf d = d_{H}$, $\mathsf X = [M]$, $A = \mathcal N _{H_{r}}(m)$, $r= \rho$ and $R=2\rho + r$, and obtain 
    \begin{align*}
		\deg_{H_{r}}(m) = | \mathcal N _{H_{r}}(m) | \leq 2^{3d} \left(1+  \frac{r}{\rho} \right)^{d}.
	\end{align*}
    Since the bound is uniform on $x_{m}$, $\Delta (H_{r}) \lesssim \left( 1 + \frac{r}{\rho}  \right) ^{d}$. 

Thus, \eqref{eqn:Double.Poly.chrom} follows because  every finite graph is colorable with at most one plus its maximum degree, i.e., 
	\begin{align*}
		K_{r} (\mathcal P_M) 
		\le
		1+ \Delta (H_{r})  \leq 
		1+
		2^{3d} \left(1+\frac{r}{\rho}\right)^d .
	\end{align*}
	Using \(\rho=q^{1/d}\) and 	$\left(1+\frac{r}{\rho}\right)^d
	\lesssim
	1+\frac{r^d}{\rho^d}
	=
	1+\frac{r^d}{q}$, the desired result follows. Observe that the constants implicit in $\lesssim$ depend only on the doubling and polynomial upper-growth
	constants, and not on \(n,r,q\).

	We now show \eqref{eqn:Double.Poly.shells}. By definition, $|S_{B_m}(r)|
	=	\sum_{i\in B_m} 	\big|\{j\in B_m:d_n(i,j)=r\}\big|$.  For each \(i\in B_m\), $\big|\{j\in B_m:d_n(i,j)=r\}\big|
	=
	| \partial B(i,r)|
	\le
	Cr^{d-1}$, 	where the last inequality follows from polynomial upper growth. Hence $|S_{B_m}(r)|
	\le
	C|B_m|r^{d-1}$. 
	On the other hand, trivially, $|S_{B_m}(r)|
	\le
	|B_m|^2$. Combining these two bounds gives
	\begin{align*}
		|S_{B_m}(r)|
		\le
		\min\{C|B_m|r^{d-1},|B_m|^2\}
		\lesssim
		|B_m|\min\{r^{d-1},|B_m|\}.
	\end{align*}
	
	Since \(|B_m|\lesssim q\), it follows that $|S_{B_m}(r)|
	\lesssim q\min\{r^{d-1},q\}$. Thus \eqref{eqn:Double.Poly.shells} follows. 
    
\end{proof}

\begin{proof}[Proof of Proposition \ref{pro:maximal.DoublePoly}]
	We take the inequality in Theorem \ref{thm:maximal} and specialize it to the setting of Lemma \ref{lem:PolyGraph.color} with $r_{n} = q^{1/d}_{n}$.

	Recall that $R(B_m)
	:=
	\{r\ge 0:S_{B_m}(r)\neq \varnothing\}$, 
	where $S_{B_m}(r)
	:=
	\{(i,j)\in B_m^2:d_n(i,j)=r\}$. So, if $r \in R(B_{m})$,  there exist \(i,j\in B_m\) such that 	$d_n(i,j)=r$ so that $r	\le
	\operatorname{diam}(B_m)$. Hence $R(B_m)
	\subseteq
	\{0,1,\ldots,\operatorname{diam}(B_m)\}$.  This and the fact that $	\operatorname{diam}(B_m)\le 2 q^{1/d} $ (by Lemma \ref{lem:PolyGraph.color}) imply that $|R(B_m)| \leq \lfloor 2 q^{1/d} \rfloor + 1$.

	This result, the definition of $\Gamma_{p}$, and the fact that $	\frac{|S_{B_m}(r)|}{|B_m|}
	\lesssim 
	\min\{r^{d-1}_{n} , q_{n}  \} =r^{d-1}_{n} = q^{1-1/d}_{n}$ imply that
	\begin{align*}
		\Gamma^{\mathrm{sum}}_{p}(B_m) \lesssim &  q^{0.5(1-1/d)}_{n}  \left\{  
		\begin{array}{ll}
			\sqrt{ \sum_{\rho \in R(B_m)}   \left(  \alpha_{ \mathcal{F} , B _m }(\rho)   \right)^{1-\frac{1}{p} } }  & if~p \in (1,\infty] \\
			\sqrt{ 	q^{1/d}_{n}  } & if~p = 1
		\end{array}
		\right. \\
		\lesssim &
		\left\{  
		\begin{array}{ll}
			\sqrt{  q^{(1-1/d)}_{n}   \sum_{\rho=0}^{	\lfloor 2 q^{1/d}_{n} \rfloor+1}   \left(  \alpha_{ \mathcal{F} , B_m }(\rho)   \right)^{1-\frac{1}{p} } }  & if~p \in (1,\infty] \\
			\sqrt{ 	q_{n}  } & if~p = 1
		\end{array}
		\right.
	\end{align*}
And,
    \begin{align*}
	\Gamma_p^{\sharp}(B_m)
	\leq 
	\left\{  
		\begin{array}{ll}
	\sqrt{ q_{n}^{(1-1/d)} \left(
	\int_0^1
	\left[
	\sum_{\rho = 0}^{ \lfloor 2 q^{1/d}_{n} \rfloor+1 }
	1\{\alpha_{\mathcal F,B_m}(\rho)\geq u\}
	\right]^{\frac{p}{p-1}}
	du
	\right)^{1-\frac{1}{p} } } & if~p \in (1,\infty] \\
	\sqrt{ q_{n} }  & if~p = 1	
	 	\end{array}
 		 \right.
\end{align*}

Thus, for $p > 1$
\begin{align*}
    \Gamma_{p}(B_{m})^{2} \leq \min \left\{ q_{n}^{(1-\frac{1}{d}) + \frac{1+p}{p}} \left(
	\int_0^1
	\left[
	\sum_{\rho = 0}^{ \lfloor 2 q^{1/d}_{n} \rfloor+1 }
	1\{\alpha_{\mathcal F,B_m}(\rho)\geq u\}
	\right]^{\frac{p}{p-1}}
	du
	\right)^{1-\frac{1}{p} } ,   q^{1-\frac{1}{d}}_{n}   \sum_{\rho=0}^{	\lfloor 2 q^{1/d}_{n} \rfloor+1}   \left(  \alpha_{ \mathcal{F} , B_m }(\rho)   \right)^{1-\frac{1}{p} } \right\} 
\end{align*}
	

	Finally, by Lemma \ref{lem:PolyGraph.color}, choosing $r^{d}_{n} = q_{n}$ implies $K_{r_{n}} = O(1)$. 
\end{proof}

\subsection{Exponential growth graphs} 

\begin{proof}[Proof of Lemma \ref{lem:coloring.exponential}]
Choose \(\rho=\rho(q)\) such that $C_B e^{c \rho}
	\lesssim q$. Equivalently, \(\rho\lesssim \log q\).
    
We now construct the partition. Let
	\(\{x_1,\ldots,x_M\}\subseteq N_n\) be a maximal \(\rho(q)\)-separated set, meaning $d_n(x_m,x_\ell)>\rho$ for  $m\neq \ell$, and maximal with respect to inclusion. By maximality, the balls
	\(\{B(x_m,\rho(q))\}_{m=1}^M\) cover \(N_n\). Define a Voronoi-type partition by assigning each node \(i\in N_n\) to one
	nearest center \(x_m\), breaking ties arbitrarily. Denote the resulting cells by
	\(\mathcal P_{M} : = \{ B_1,\ldots,B_M\} \). Such construction implies that $B_m\subseteq B(x_m,\rho(q))$ and thus 
 \begin{align*}
		|B_{m}| \leq |B(x_{m},\rho)|
		=
		1+\sum_{s=1}^{\rho} |\partial B(i,s)|  \le
		1+C\sum_{s=1}^{\rho} e^{c s}
		\lesssim
		e^{c \rho} \lesssim q
	\end{align*}
    where the last inequality follows from our choice of $\rho = \rho(q)$.

    We now bound the $r$-chromatic number. For this we construct the $r$-conflict graph, $H_{r} : = H_{r}[\mathcal P_{M}] : =([M],E_{r})$, and apply the greedy bound. $m$ and $\ell$ are adjacent according to $E_{r}$ if the two blocks \(B_m\) and \(B_\ell\) are such that $d_n(B_m,B_\ell)\le r$. If this happens, then there exist \(i\in B_m\) and \(j\in B_\ell\) such that
	\(d_n(i,j)\le r\). Since \(B_m\subseteq B(x_m,\rho)\) and
	\(B_\ell\subseteq B(x_\ell,\rho)\), we obtain
	\begin{align*}
		d_n(x_m,x_\ell) \le d_n(x_m,i)+d_n(i,j)+d_n(j,x_\ell) \le \rho+r+\rho = r+2\rho.
	\end{align*}
	Thus every neighbor of \(B_m\) in the conflict graph has a center \(x_\ell\)
	inside \(B(x_m,r+2\rho)\). Therefore,
	\begin{align*}
		\deg_{H_{r}}(m) \le
		\left|B(x_m,r+2\rho)\right| \lesssim
		e^{c(r+2\rho)}  =	e^{cr} e^{2c\rho} \lesssim	q^{2} e^{cr}.
	\end{align*}

    Since this bound is uniform on $m$, by the greedy bound, this proves that $K_{r}(\mathcal P_{M}) \lesssim q^{2} e^{cr}$.

    We now bound the within-block $r$-distance. Recall that $S_{B_m}(r)
	=
	\{(i,j)\in B_m\times B_m:d_n(i,j)=r\}$. So the trivial bound $|S_{B_{m}}(r)| \leq |B_{m}|^{2} \lesssim q$ is always feasible. On th other hand, for each fixed \(i\in B_m\), the number of \(j\in B_m\) with \(d_n(i,j)=r\)	is at most \(|\partial B(i,r)|\). Hence
	\begin{align}\label{eqn:exponential.S.bound}
		|S_{B_m}(r)| \le
		\sum_{i\in B_m} |\partial B(i,r)|  \le
		|B_m|\, C e^{c r}.
	\end{align}
	Dividing by \(|B_m|\) gives	$\bar N_{B_m}(r)
	=
	\frac{|S_{B_m}(r)|}{|B_m|}
	\lesssim e^{c r}$.
	This proves the result.
    
\end{proof}

\begin{proof}[Proof of Proposition \ref{pro:maximal.exponential.growth}]
By Theorem \ref{thm:maximal}, for any partition with \(|B_m|\le q_n\)
	and  by the exponential-growth coloring bound, $K_{r_{n}}
	\lesssim
	q_n^2 e^{c r_n}$,
	\begin{align*}
		\left\|
		\sup_{f\in\mathcal F}\mathbb G_n(f)
		\right\|_{L^1(P)}
		\lesssim\;&
		q_n e^{c r_n/2}
		\max_{m\in[M_n]}\Gamma_p(B_m)
		\gamma_{2,a}
		\left(
		\mathcal F,
		\max_{i\in N_n}\|\cdot\|_{L^{2p}(P_{Z_i})}
		\right)
		\\
		&+
		\left(
		\frac{q_n^3 e^{c r_n}}{\sqrt n}
		+
		\sqrt n
		\bar{\tau}_{\operatorname{cone} \mathcal F}
	\left( 	\mathcal P_{M_{n}}, 	\mathcal C_{r_{n}}[\mathcal P_{M_{n}} ] \right) 
		\right)
		\gamma_{1,b}
		\left(
		\mathcal F,\|\cdot\|_\infty
		\right).
	\end{align*}
	
	If \(q_n = q \asymp 1\), then \(q_n\), \(q_n^2\), and \(q_n^3\) are absorbed
	into the implicit constant. So, the previous display reduces to
	\begin{align*}
		\left\|
		\sup_{f\in\mathcal F}\mathbb G_n(f)
		\right\|_{L^1(P)}
		\lesssim\;&
		e^{c r_n/2}
		\max_{m\in[M_n]}\Gamma_p(B_m)
		\gamma_{2,a}
		\left(
		\mathcal F,
		\max_{i\in N_n}\|\cdot\|_{L^{2p}(P_{Z_i})}
		\right)+
		\left(
		\frac{e^{c r_n}}{\sqrt n}
		+
		\sqrt n\,
		\tau_{\operatorname{cone}\mathcal F}(r_n)
		\right)
		\gamma_{1,b}
		\left(
		\mathcal F,\|\cdot\|_\infty
		\right),
	\end{align*}
	where  $ 	\bar{\tau}_{\operatorname{cone} \mathcal F}
	\left( 	\mathcal P_{M_{n}}, 	\mathcal C_{r_{n}}[\mathcal P_{M_{n}} ] \right) = : 
	\tau_{\operatorname{cone}\mathcal F}(r_n)$. 
	
	It remains to verify the stated bounds on \(\Gamma_p(B_m)\). By the
	exponential shell-growth condition, for every \(i\in N_n\), $|\partial B(i,s)| 	\lesssim	e^{c s}$. Therefore, by \eqref{eqn:exponential.S.bound}, for each block \(B_m\), $|S_{B_m}(s)| \lesssim |B_m|e^{c s}$.  Dividing by \(|B_m|\) gives $\bar N_{B_m}(s)	:=	\frac{|S_{B_m}(s)|}{|B_m|}	\lesssim	e^{c s}$.  On the other hand, the trivial bound $|S_{B_m}(s)|
	\le |B_m|^2$ implies $\bar N_{B_m}(s)
	\le |B_m|
	\lesssim q_n$. 	Combining the two bounds, $\bar N_{B_m}(s)
	\lesssim
	\min\{e^{c s},q_n\}$. 
	
	Also, from the proof of Lemma \ref{lem:coloring.exponential} is easy to see that $\operatorname{diam}(B_m)\lesssim \log q_n$. 	Hence, if \(S_{B_m}(s)\neq\varnothing\), then necessarily $s\le \operatorname{diam}(B_m)	\lesssim \log q_n$.  Thus $R(B_m)
	\subseteq
	\{s:s\le C\log q_n\}$ for some finite universal constant $C$. 
	Using the assumed shell representation of \(\Gamma_p(B_m)\),
	\begin{align*}
		\Gamma_p(B_m) &\lesssim
		1+
		\sum_{s\in R(B_m)}
		\bar N_{B_m}(s)
		\alpha_{\mathcal F,B_m}(s)^{1-1/p}
		\lesssim
		1+
		\sum_{s\le C\log q_n}
		\min\{e^{c s},q_n\}
		\alpha_{\mathcal F,B_m}(s)^{1-1/p} \\
		& \lesssim 
		1+
		\sum_{s\le C\log q_n} 	\min\{e^{c s},q_n\},
	\end{align*}
	where the last line follows from the fact that $\alpha_{ \mathcal{F} , B _m } \leq 1$. 
	
	Finally, if \(q_n\asymp 1\), then \(\log q_n=O(1)\) and
	\(\min\{e^{c s},q_n\}=O(1)\) uniformly over the finite range
	\(s\le C\log q_n\). Since \(\alpha_{\mathcal F,n}(s)\le 1\), it follows that
	\begin{align*}
		\max_{m\in[M_n]}\Gamma_p(B_m)
		\lesssim 1.
	\end{align*}
	This proves the proposition.
\end{proof}

\subsection{Directed dyadic graphs}

\begin{proof}[Proof of Lemma \ref{lem:directed-dyads-coloring-tau}]

	First, since \(N_n\) consists of all ordered pairs \((i,j)\) with \(i\neq j\), its cardinality is $|N_n|=n(n-1)$. 

    Fix any dyad \((i,j)\in N_n\). We now characterize $S((i,j),r)$ for any $r \geq 0$.  The shell of radius zero contains only the dyad itself, so $|S((i,j),0)|=1$. Next consider the dyads adjacent to \((i,j)\). A directed dyad \((k,\ell)\neq (i,j)\) is adjacent to \((i,j)\) if and only if it contains \(i\) or \(j\) as one of its endpoints. The number of directed dyads involving \(i\) is \(2(n-1)\): there are \(n-1\) dyads of the form \((i,k)\) and \(n-1\) dyads of the form \((k,i)\). Similarly, the number of directed dyads involving \(j\) is \(2(n-1)\). The dyads involving both \(i\) and \(j\) are precisely \((i,j)\) and \((j,i)\), and these have been counted twice. Therefore the number of dyads involving either \(i\) or \(j\) is $2(n-1)+2(n-1)-2 = 4n-6$. Thus, $|S((i,j),1)| \asymp n$. 

    We now show that $|S(i,j),r)| = 0$ for any $r > 2$  and $|S(i,j),2)| \asymp n^{2}$. To do this, observe that the graph \(G_n\) has diameter two.  Take any dyad \((k,\ell)\) that does not share an endpoint with \((i,j)\). Then \(k,\ell\notin\{i,j\}\), we can take the dyad \((i,k)\) that belongs to \(N_n\), and ``connects'' $(i,j)$ and $(k,\ell)$, thus  $d_n((i,j),(k,\ell))\leq 2$. Therefore, $|S(i,j),r)| = 0$ for any $r > 2$  and $|S(i,j),2)| = |N_{n}| - 1 - |S((i,j),1)|$. This last equality implies that $|S((i,j),2)| \asymp n^2$.

	It remains to prove the claim about \(\tau\). Let \(A,B\subseteq N_n\) satisfy $d_n(A,B)\geq 2$. 
	Then no dyad in \(A\) shares an endpoint with any dyad in \(B\). 
	
	Therefore $	Z_{A} : = \{Z_{ij}:(i,j)\in A\} $ and  $Z_{B} : = \{Z_{kl}:(k,l)\in B\}$ are two collections of primitive random variables that are disjoint and mutually independent. Hence $\sigma(Z_A)
	\indep
	\sigma(Z_B)$. 	Independence implies that the corresponding \(\tau\)-coefficient is zero. Taking the supremum over all such \(A,B\) with \(|B|\leq q\) yields
	\begin{align*}
		\tau_{\mathcal B}(r,q)=0
		\qquad
		\text{for all } r\geq 2.
	\end{align*}
	
	At distance one, the coefficient need not vanish. For example, \(Z_{ij}\) and \(Z_{ik}\) share the latent variable \(\mu_i\). Unless \(F\) is such that the shared latent variable has no effect, these two random variables are generally dependent. Therefore \(\tau_{\mathcal B}(1,q)\) is generally nonzero.
\end{proof}

The next lemma is used in the proof of Proposition \ref{prop:maximal-directed-dyads}.

\begin{lemma}
	\label{lem:gamma-directed-matching-blocks}
	Let	$N_n:=\{(i,j):1\leq i,j\leq n,\ i\neq j\}$, 
	and suppose $(Z_{ij})_{(i,j) \in N_{n}}$ is given by \eqref{eqn:Z.Fform}. Then, for any directed matching block  \(B\subseteq N_n\),\footnote{A directed matching block implies that for any two distinct dyads
		\((i,j),(k,\ell)\in B\), $\{i,j\}\cap\{k,\ell\}=\varnothing$.}
	\begin{align*}
		\Gamma^{\mathrm{sum}}_p(B) = 	\Gamma^\sharp_p(B) =1,~\forall p \in (1,\infty),
	\end{align*}
	and
	\begin{align*}
		\Gamma^{\mathrm{sum}}_1(B)
		=
		\Gamma^\sharp_1(B)
		=
		\sqrt{|B|},~for~p=1.
	\end{align*}
\end{lemma}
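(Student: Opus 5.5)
The argument is a direct computation: for a directed matching block the pair profile $S_B(\cdot)$ is supported only at distance $0$ and at distance $2$, and the $\alpha$-coefficient vanishes at distance $2$.

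\emph{Step 1: the pair profile.} I would describe $S_B(\rho)$ for each $\rho\in R(B)$.
\begin{itemize}
\item Distance $0$ contributes exactly the diagonal pairs, so $|S_B(0)|=|B|$.
\item Any two distinct dyads in $B$ share no endpoint, hence they are non-adjacent in $\mathfrak G_n$.
\item By Lemma~\ref{lem:directed-dyads-coloring-tau}, the graph has diameter two (for $n$ large enough that a connecting dyad exists). So every off-diagonal pair lies at distance exactly $2$, and $|S_B(2)|=|B|(|B|-1)$.
\end{itemize}
Therefore $R(B)\subseteq\{0,2\}$.

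\emph{Step 2: the $\alpha$-coefficient.} By convention $\alpha_{\mathcal F,B}(0)=1$. For $\rho=2$, take distinct $(i,j),(k,\ell)\in B$. Then $Z_{ij}=F(\mu_i,\mu_j,\varepsilon_{ij})$ and $Z_{k\ell}$ are functions of disjoint collections of mutually independent primitives, so they are independent; this is the same argument as in part 2 of Lemma~\ref{lem:directed-dyads-coloring-tau}. Consequently $\alpha(g(Z_{ij}),g(Z_{k\ell}))=0$ for every $g\in\mathcal F-\mathcal F$, and hence $\alpha_{\mathcal F,B}(2)=0$.

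\emph{Step 3: plug in, case $p\in(1,\infty)$.}
\begin{itemize}
\item For $\Gamma^{\mathrm{sum}}_p$ the sum is
\[
\frac{|S_B(0)|}{|B|}\cdot 1^{1-1/p}+\frac{|S_B(2)|}{|B|}\cdot 0^{1-1/p}=1+0=1,
\]
where $0^{1-1/p}=0$ because $1-1/p>0$.
\item For $\Gamma^{\sharp}_p$, the integrand is $\mu_B(u)=1\cdot 1\{1\ge u\}+(|B|-1)\,1\{0\ge u\}$. This equals $1$ for Lebesgue-a.e.\ $u\in(0,1]$, so $\int_0^1\mu_B^{p/(p-1)}=1$ and $\Gamma^{\sharp}_p(B)=1$.
\end{itemize}

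\emph{Step 4: case $p=1$.} Both quantities are defined without the $\alpha$ weights, so both equal
\[
\sqrt{\sum_{\rho}\frac{|S_B(\rho)|}{|B|}}=\sqrt{\frac{|B|^2}{|B|}}=\sqrt{|B|}.
\]

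\emph{Main obstacle.} The only delicate point is Step 1. One must confirm that a non-adjacent pair is at distance exactly $2$ rather than $\infty$, which requires the connecting dyad $(i,k)$ to exist. This holds for $n\ge 4$, and as a fallback, pairs at distance $\infty$ also have $\alpha=0$, so the conclusion is unaffected. The other point to watch is treating $1\{0\ge u\}$ as a null set in the $\Gamma^\sharp$ integral.
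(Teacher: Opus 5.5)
Your proposal is correct and follows essentially the same computation as the paper: it identifies $R(B)=\{0,2\}$ with $|S_B(0)|=|B|$ and $|S_B(2)|=|B|(|B|-1)$, uses the independence of endpoint-disjoint dyads to get $\alpha_{\mathcal F,B}(2)=0$, and substitutes into the definitions of $\Gamma^{\mathrm{sum}}_p$ and $\Gamma^{\sharp}_p$. Your caveat about distance $\infty$ is harmless but unnecessary: for endpoint-disjoint $(i,j),(k,\ell)$, the connecting dyad $(i,k)$ always exists because $i\neq k$.
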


\begin{proof}[Proof of Lemma \ref{lem:gamma-directed-matching-blocks}]
	Fix a directed matching block \(B\), and write \(q_B:=|B|\). By construction, if
	\((i,j),(k,\ell)\in B\) are distinct, then $\{i,j\}\cap\{k,\ell\}=\varnothing$.  Therefore
	\begin{align*}
		Z_{ij}=F(\mu_i,\mu_j,\varepsilon_{ij})~and~
		Z_{k\ell}=F(\mu_k,\mu_\ell,\varepsilon_{k\ell})
	\end{align*}
	are functions of disjoint collections of primitive random variables. Since the primitive variables are mutually independent, the dyadic observations in \(B\) are mutually independent.
	
	Now consider the within-block distance profile inside \(B\).  Since $B$ is a directed matching block, any pairs $a$ and $b$ in $B$ are either the same, so $d_{n}(a,b) = 0$ or different --- so they don't share any endpoints --- and thus $d_{n}(a,b) = 2$. Thus, $R(B) : =  \{0,2\}$. 
	
	The diagonal shell is $S_B(0)=\{(a,a):a\in B\}$, so $|S_B(0)|=q_B$. For $r = 2$, since pairs are ordered, for each $a \in B$, there $q_B-1$ distinct pairs, so $|S_B(2)|=q_B(q_B-1)$. 
	
	By convention, $\alpha_{\mathcal F,B}(0)=1$, and since for the off-diagonal shell the observations are independent, $\alpha_{\mathcal F,B}(2)=0$. 
	
	Therefore, for \(p\in(1,\infty]\), the definition of \(\Gamma^{\mathrm{sum}}_p(B)\) gives
	\begin{align*}
		\Gamma^{\mathrm{sum}}_p(B)^2
		&=
		\sum_{\rho\in R(B)}
		\frac{|S_B(\rho)|}{|B|}
		\left(
		\alpha_{\mathcal F,B}(\rho)
		\right)^{1-\frac1p} =
		\frac{|S_B(0)|}{q_B}
		\left(
		\alpha_{\mathcal F,B}(0)
		\right)^{1-\frac1p}
		+
		\frac{|S_B(2)|}{q_B}
		\left(
		\alpha_{\mathcal F,B}(2)
		\right)^{1-\frac1p} \\
		&=1.
	\end{align*}
	
	For \(p=1\), the definition of \(\Gamma^{\mathrm{sum}}_1(B)\) does not use the mixing coefficients. Hence
	\begin{align*}
		\Gamma^{\mathrm{sum}}_1(B)^2
		=
		\sum_{\rho\in R(B)}
		\frac{|S_B(\rho)|}{|B|} =
		\frac{|S_B(0)|}{q_B}
		+
		\frac{|S_B(2)|}{q_B} =
		1+(q_B-1) =q_B.
	\end{align*}
	
	We next consider \(\Gamma^\sharp_p(B)\). For \(p\in(1,\infty)\), define
	\begin{align*}
		A_B(u)
		:=
		\sum_{\rho\in R(B)}
		\frac{|S_B(\rho)|}{|B|}
		1\{\alpha_{\mathcal F,B}(\rho)\geq u\}.
	\end{align*}
	Using $\alpha_{\mathcal F,B}(0)=1$ and $\alpha_{\mathcal F,B}(2)=0$, we obtain, for \(u\in(0,1]\), $A_B(u)
	=
	\frac{|S_B(0)|}{q_B}1\{1\geq u\}
	+
	\frac{|S_B(2)|}{q_B}1\{0\geq u\} 
	=
	1$. 
	Therefore $\int_0^1
	\left[
	A_B(u)
	\right]^{\frac{p}{p-1}}
	du
	=
	1$ and	consequently,
	\begin{align*}
		\Gamma^\sharp_p(B)
		=
		\sqrt{
			\left(
			\int_0^1
			\left[
			A_B(u)
			\right]^{\frac{p}{p-1}}
			du
			\right)^{1-\frac1p}
		} =1
	\end{align*}
	for all  $p\in(1,\infty)$.
	
	Finally, for \(p=1\), the displayed definition of \(\Gamma^\sharp_1(B)\) reduces to the same shell-counting expression which implies that $\Gamma^\sharp_1(B)=\sqrt{q_B}$. 
\end{proof}

\begin{proof}[Proof of Proposition \ref{prop:maximal-directed-dyads}]
	
	We apply Theorem \ref{thm:maximal} with an specific partition. For this construction, we assume that \(n\) is even --- this restriction is not essential, but it simplifies the exposition. 
	
	Let
	\((\mathcal A_n,\mathcal E_n)\) denote the complete graph of units, where
	\begin{align*}
		\mathcal A_n := [n],
		\qquad
		\mathcal E_n := \big\{\{i,j\}:1\leq i<j\leq n\big\}.
	\end{align*}
	Since \(n\) is even, there exist perfect matchings
	\(\mathcal M_1,\ldots,\mathcal M_{n-1}\) such that $	\mathcal E_n
	=
	\bigcup_{s=1}^{n-1}\mathcal M_s$, and $ \mathcal M_s\cap\mathcal M_{s'}=\varnothing$ for any $s\neq s'$, and each \(\mathcal M_s\) contains \(n/2\) edges with no shared endpoints. That is, for every \(s\), any $\{i,j\},\{k,\ell\}\in \mathcal M_s$ such that $\{i,j\}\neq \{k,\ell\}$ then $
	\{i,j\}\cap\{k,\ell\}=\varnothing$. 
	
	For each perfect matching \(\mathcal M_s\), define two directed blocks:
	\begin{align*}
		B_s^{+}
		:=
		\big\{(i,j)\in N_n:\{i,j\}\in\mathcal M_s,\ i<j\big\},~and~B_s^{-}
		:=
		\big\{(j,i)\in N_n:\{i,j\}\in\mathcal M_s,\ i<j\big\}.
	\end{align*}
	Then $\mathcal P_{M_{n}}
	:=
	\{B_{m} :m=1,\ldots,2(n-1)\}$, with $B_{2m-1} : = B_m^{+}$ and $B_{2m}: = B_m^{-}$, 
	is a partition of \(N_n\). 
	Moreover, each \(B_m\) is a directed matching: for any distinct
	\((i,j),(k,\ell)\in B_m\), $\{i,j\}\cap\{k,\ell\}=\varnothing$. Thus, this partition is such that $M_{n} = 2(n-1)$, $q_{n} = |B_{2s}| = |B_{2s-1}|  \leq |\mathcal M_{s} | = n/2$.

Consider the following coloring induced by the partition $\mathcal P_{M_{n}}$. For each $m=1,\ldots,2(n-1)$, define the color class $\mathcal C_m
:= \{B_m\}$. Since each color class contains a single block, the separation requirement between distinct blocks belonging to the same color class is trivially satisfied. Hence, $\{\mathcal C_m:m=1,\ldots,2(n-1)\}$ defines a valid $r$-coloring of $\mathcal P_{M_{n}}$ for any $r \geq 1$. In particular, this construction uses $2(n-1)$ colors and therefore implies $K_{r} \leq 2(n-1) \asymp n$ for any $r \geq 2$.

	Therefore, by Theorem \ref{thm:maximal} (observe that $|N_{n}| = n(n-1)$ in this application), for any $r_{n} \geq 1$,
	\begin{align*}
		\left \| \sup_{f\in\mathcal{F}}  \mathbb G_{n}(f)   \right\|_{L^{1}(P)} \lesssim &   \sqrt{2(n-1)}  \max_{m \in 2(n-1)} \Gamma_{p}(B_{m})  \gamma_{2,a}(\mathcal F ,   \max_{(i,j) \in N_{n}}   || \cdot ||_{L^{2p}(P_{Z_{ij}}) } )   \\
		& + \left(   \frac{n(n-1)}{n^{2}}  + n \tau_{\operatorname{cone} \mathcal F }(r_{n},  n/2 )  \right)  \gamma_{1,b}(\mathcal F ,   ||\cdot||_{\infty}) \\
		= &  \sqrt{2(n-1)}  \max_{m \in 2(n-1)} \Gamma_{p}(B_{m})  \gamma_{2,a}(\mathcal F ,    || \cdot ||_{L^{2p}(P) } )   \\
		& + \left(  1 +  n \tau_{\operatorname{cone} \mathcal F }(r_{n},  n/2 )  \right)  \gamma_{1,b}(\mathcal F ,   ||\cdot||_{\infty}) 
	\end{align*}
	where the second line follows because $P_{Z_{ij}} = P$ since $\mu_{i}$ and $\varepsilon_{ij}$ are assumed to be IID. 
	
	By Lemma \ref{lem:gamma-directed-matching-blocks}, 	
	\begin{align}
		\max_{m\leq M_n}\Gamma^{\mathrm{sum}}_p(B_m)
		=
		\max_{m\leq M_n}\Gamma^\sharp_p(B_m)
		=
		1,
		\qquad p>1,
	\end{align}
	while
	\begin{align}
		\max_{m\leq M_n}\Gamma^{\mathrm{sum}}_1(B_m)
		=
		\max_{m\leq M_n}\Gamma^\sharp_1(B_m)
		=
		\sqrt{\frac n2}.
	\end{align}
	
	Therefore, for any $p>1$, 
	\begin{align*}
		\left \| \sup_{f\in\mathcal{F}}  \mathbb G_{n}(f)   \right\|_{L^{1}(P)} \lesssim &   \sqrt{2(n-1)}   \gamma_{2,a}(\mathcal F ,    || \cdot ||_{L^{2p}(P) } )  + \left(  1 +  n \tau_{\operatorname{cone} \mathcal F }(r_{n},  n/2 )  \right)  \gamma_{1,b}(\mathcal F ,   ||\cdot||_{\infty}).
	\end{align*}	
	
	By choosing $r_{n} =2$,  $\tau_{\operatorname{cone} \mathcal F } (r_{n},  n/2 )  = 0$ by Lemma \ref{lem:directed-dyads-coloring-tau}, and thus 
	\begin{align*}
		\left \| \sup_{f\in\mathcal{F}}  \mathbb G_{n}(f)   \right\|_{L^{1}(P)} \lesssim  \sqrt{n}   \gamma_{2,a}(\mathcal F ,    || \cdot ||_{L^{2p}(P) } )   +   \gamma_{1,b}(\mathcal F ,   ||\cdot||_{\infty}),
	\end{align*}	
	as desired. 
\end{proof}

\section{Appendix for Glivenko--Cantelli results}
\label{app:GC}

The following lemma is used in the proof of Theorem \ref{thm:single-scale-network-gc}.

\begin{lemma}
	\label{lem:finite-class-gamma-bounds}
	Let \((T,d)\) be a finite pseudo-metric space and suppose that, for some
	\(k\ge 0\), $|T|
	\le
	2^{2^k}$. Then, for every \(p,r>0\),
	\begin{align*}
		\gamma_{p,r}(T,d)
		&\lesssim  2^{\lceil k r \rceil / p }\operatorname{diam}(T,d),
	\end{align*}
	where $\operatorname{diam}(T,d)
	:=
	\sup_{t,t'\in T} d(t,t')$. 
\end{lemma}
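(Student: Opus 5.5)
The plan is to bound the functional by exhibiting one explicit admissible partition sequence. Since $\gamma_{p,r}$ is an infimum, any admissible sequence in $\mathcal T_r(T)$ gives an upper bound.

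Put $L:=\lceil kr\rceil$ and take
\begin{align*}
T_l=\{T\}\ \text{for } 0\le l<L,\qquad T_l=\bigl\{\{t\}:t\in T\bigr\}\ \text{for } l\ge L .
\end{align*}

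First we check admissibility. The sequence is increasing, since the singleton partition refines the trivial one, and $\operatorname{card}(T_0)=1$. For $1\le l<L$ we have $\operatorname{card}(T_l)=1\le 2^{2^{l/r}}$. For $l\ge L$ we have $l/r\ge k$, so $\operatorname{card}(T_l)=|T|\le 2^{2^k}\le 2^{2^{l/r}}$. If $k=0$ then $L=0$ and only the singleton regime occurs; this is consistent because $|T|\le 2$. There is a small caveat when $|T|=2$ and $L=0$, since then $\operatorname{card}(T_0)=2$ is not allowed. In that case we instead start the singleton partition at level $1$. This costs only a term $2^{0}\operatorname{diam}(T,d)$, which is absorbed into the constant.

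Next we evaluate the chaining sum. For $f\in T$ and $l\ge L$ the cell $T(f,T_l)$ is the singleton $\{f\}$. Its diameter function $D(f,T_l)$ is therefore identically $0$, and these terms vanish, with $d$ of the diameter read as $0$ on a single point. For $l<L$ the cell is all of $T$, and we interpret $d(D(f,T_l))$ as the $d$-diameter of the cell, i.e.\ $\operatorname{diam}(T,d)$; this is how it is used in the Glivenko--Cantelli application. The supremum over $f$ is thus at most
\begin{align*}
\operatorname{diam}(T,d)\sum_{l=0}^{L-1}2^{l/p}
=\operatorname{diam}(T,d)\,\frac{2^{L/p}-1}{2^{1/p}-1}
\lesssim 2^{\lceil kr\rceil/p}\operatorname{diam}(T,d),
\end{align*}
where the geometric-series constant $(2^{1/p}-1)^{-1}$ depends only on $p$ and is absorbed into $\lesssim$.

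The step most likely to need care is the identification of $d(D(f,T_l))$ with the diameter of the cell. When $d$ is a seminorm such as $\|\cdot\|_\infty$ or $\max_i\|\cdot\|_{L^{2p}(P_{Z_i})}$, the diameter function $\sup_{f_1,f_2}|f_1-f_2|$ can in principle have norm larger than $\sup_{f_1,f_2}\|f_1-f_2\|$. For the sup-norm the two coincide. For the $L^{2p}$ norm on a finite cell, the bound $\sup_{f_1,f_2}|f_1-f_2|\le 2\max_{f}|f-f_0|$ does not immediately give control by the diameter without a cardinality factor. Accordingly, I would first state the lemma with $d(D(\cdot))$ understood as the cell diameter, as in the main-text usage. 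Should the pointwise version be needed, I would fall back on replacing $\operatorname{diam}$ by $\|\sup_{t,t'}|t-t'|\|$, which is the quantity the proof of Theorem~\ref{thm:single-scale-network-gc} actually bounds, via the $\|\cdot\|_\infty$ diameter, anyway.
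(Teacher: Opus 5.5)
Your proof is correct and follows the paper's argument: the same admissible sequence (trivial partition for $l<\lceil kr\rceil$, singletons afterwards), followed by the same geometric-series bound $\operatorname{diam}(T,d)\sum_{l<\lceil kr\rceil}2^{l/p}\lesssim 2^{\lceil kr\rceil/p}\operatorname{diam}(T,d)$. Two of your additions are legitimate refinements the paper's proof passes over silently: the fix for $k=0$, $|T|=2$, where $\operatorname{card}(T_0)=1$ would otherwise fail, and the observation that the step $d(D(f,T_l))\le\operatorname{diam}(T,d)$ requires reading $d(D(\cdot))$ as the cell diameter, which is automatic for $\|\cdot\|_\infty$ but not for the $L^{2p}$ seminorms.
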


\begin{proof}[Proof of Lemma \ref{lem:finite-class-gamma-bounds}]
	Let $D := \operatorname{diam}(T,d)$. By definition \ref{def:talagrand}
	\begin{align*}
		\gamma_{p,r}(T,d)
		:=
		\inf_{T^\infty \in \mathcal T_r(T)}
		\sup_{t \in T}
		\sqrt{2}
		\sum_{l=0}^{\infty}
		2^{l/p}\, d\big( D(t,T_l) \big).
	\end{align*}
	
	Let \(\ell_k\) be the smallest integer such that $2^{2^{\ell_k/r}}	\ge	|T|$.  Since \(|T|\le 2^{2^k}\), it is enough that $2^{\ell_{k}/r} \geq 2^{k} \iff \ell_{k} =  \lceil  r k  \rceil $. We now construct an admissible partition sequence:  For
	\(l<\ell_k\), set $\mathcal T_l	:=	\{T\}$,  the trivial partition. For \(l \ge \ell_k\), set \(\mathcal T_l\) equal to the discrete partition of \(T\) into singletons. This sequence is admissible by the definition of \(\ell_k\). Under this partition sequence, for \(l<\ell_k\), $d\big( D(t,T_l) \big) \leq D$, while for \(l \ge\ell_k\), $d\big( D(t,T_l) \big) = 0$. Therefore, 
	\begin{align*}
		\sup_{t \in T}
		\sum_{l=0}^{\infty}
		2^{l/p}\, d\big( D(t,T_l) \big) = 	\sup_{t \in T}
		\sum_{l=0}^{\ell_{k} - 1}
		2^{l/p}\, d\big( D(t,T_l) \big) \leq D 	\sum_{l=0}^{\ell_{k} - 1}
		2^{l/p} \lesssim D 2^{\ell_{k}/p}.
	\end{align*}
	This completes the proof.
\end{proof}
\section{Some useful concepts related to graphs}
\label{app:graph}

We introduce some graph-related concepts that will be used throughout the paper. Let $\mathfrak{G} =(N,V)$ be a graph endowed with the graph distance $d$. For any $M \in \mathbb{N}$, let $\mathcal P_{M} : = \{B_1,\ldots,B_M\}$ be a partition of $N$. For two cells $B_m,B_{m'}\in\mathcal P_{M}$, define their distance by
\begin{align*}
	d(B_m,B_{m'})
	:=
	\min_{i\in B_m;j\in B_{m'}} d(i,j).
\end{align*}

For any graph $\mathfrak{G} =(N,V)$, let $\mathcal N_{\mathfrak{G}}(x)
	:=
	\{ x'\in N \setminus{x}:{x,x'}\in V \}$ 
denote the neighborhood of $x$ in the graph, and let
\begin{align*}
	\deg_{\mathfrak{G}}(x)
	:=
	|\mathcal N_{\mathfrak{G}}(x)|
\end{align*}
denote its degree. The maximum degree of the graph is
\begin{align*}
	\Delta(\mathfrak{G})
	:=
	\max_{x \in N }
	\deg_{\mathfrak{G}}(x).
\end{align*}

For $r\ge 0$, the $r$-conflict graph associated with $\mathcal P_{M}$ is the graph $H_{r}[\mathcal P_{M}]
:=
([M],E_{r})$, 
where
\begin{align*}
	\{m,m'\}\in E_{r}
	\Longleftrightarrow
	m\neq m'
	\ \text{ and }
	d(B_m,B_{m'})\le r.
\end{align*}
Thus, the vertices of $H_{r}[\mathcal P_{M}]$ correspond to cells of the partition, and two vertices are adjacent whenever the corresponding cells are within graph distance $r$.

A proper $(r,K)$-coloring of $\mathfrak G$ associated to $\mathcal P_{M}$ is a map $c \colon [M] \to [K] $  such that $c(m)=c(m')$ and $m\neq m'$, then 
$d(B_m,B_{m'})>r$. The sets
\begin{align*}
	C_{k} : = c^{-1}(k)  = \{m \in [M] :c(m)=k \},~\forall k\in [K],
\end{align*}
are referred to as the color classes. In particular, 
\begin{align*}
	\forall m, m' \in C_{k} ~such~that~m \ne m' \Longrightarrow d(B_{m},B_{m'}) > r,
\end{align*}
i.e., any two distinct cells whose indices belong to the same color class are separated by a distance strictly greater than $r$.

The $r$-chromatic number of a graph $\mathfrak G$ associated to $\mathcal P_{M}$, denoted by $\chi_{r}(\mathcal P_{M})$, is the smallest $K$ for which a proper $(r,K)$-coloring exists.

By the standard greedy-coloring bound,
\begin{align}\label{eqn:color.greedy.bound.0}
\chi_{r}\bigl(\mathcal P_{M}\bigr)
\leq
\Delta\bigl(H_{r}[\mathcal P_{M}]\bigr)+1.
\end{align}
Indeed, coloring the vertices sequentially, each vertex has at most $\Delta\bigl(H_r[\mathcal P_M]\bigr)$ neighbors and therefore at most $\Delta\bigl(H_r[\mathcal P_M]\bigr)$ forbidden colors, so one of $\Delta\bigl(H_r[\mathcal P_M]\bigr)+1$ colors is always available.

It is easy to see that an $(r,\chi_{r}(\mathcal P_{M}))$-coloring coincides with and $r$-coloring in Definition \ref{def:color} in the text. Hence, $K_{r} = \chi_{r}(\mathcal P_{M})$ and thus 
\begin{align}\label{eqn:color.greedy.bound}
K_{r} = K_{r}\bigl(\mathcal P_{M}\bigr) 
\leq
\Delta\bigl(H_{r}[\mathcal P_{M}]\bigr)+1.
\end{align}

	\end{document}